\documentclass[reqno]{amsart}

\usepackage{tikz-cd}
\usepackage{fullpage,dsfont}
\usepackage{mathdots}
\usepackage{wasysym}

\usepackage{hyperref} 

\usepackage[nobysame,alphabetic,initials,msc-links]{amsrefs}

\DefineSimpleKey{bib}{how}
\renewcommand{\PrintDOI}[1]{%
  \href{http://dx.doi.org/#1}{{\tt DOI:#1}}%
}
\renewcommand{\eprint}[1]{#1}
\BibSpec{book}{%
    +{}  {\PrintPrimary}                {transition}
    +{.} { \PrintDate}                  {date}
    +{.} { \textit}                     {title}
    +{.} { }                            {part}
    +{:} { \textit}                     {subtitle}
    +{,} { \PrintEdition}               {edition}
    +{}  { \PrintEditorsB}              {editor}
    +{,} { \PrintTranslatorsC}          {translator}
    +{,} { \PrintContributions}         {contribution}
    +{,} { }                            {series}
    +{,} { \voltext}                    {volume}
    +{,} { }                            {publisher}
    +{,} { }                            {organization}
    +{,} { }                            {address}
    +{,} { }                            {status}
    +{,} { \PrintDOI}                   {doi}
    +{,} { \PrintISBNs}                 {isbn}
    +{}  { \parenthesize}               {language}
    +{}  { \PrintTranslation}           {translation}
    +{;} { \PrintReprint}               {reprint}
    +{.} { }                            {note}
    +{.} {}                             {transition}
    +{}  {\SentenceSpace \PrintReviews} {review}
}
\BibSpec{article}{%
    +{}  {\PrintAuthors}                {author}
    +{,} { \textit}                     {title}
    +{.} { }                            {part}
    +{:} { \textit}                     {subtitle}
    +{,} { \PrintContributions}         {contribution}
    +{.} { \PrintPartials}              {partial}
    +{,} { }                            {journal}
    +{}  { \textbf}                     {volume}
    +{}  { \PrintDatePV}                {date}
    +{,} { \issuetext}                  {number}
    +{,} { \eprintpages}                {pages}
    +{,} { }                            {status}
    +{,} { \PrintDOI}                   {doi}
    +{,} { \eprint}        {eprint}
    +{}  { \parenthesize}               {language}
    +{}  { \PrintTranslation}           {translation}
    +{;} { \PrintReprint}               {reprint}
    +{.} { }                            {note}
    +{.} {}                             {transition}
    +{}  {\SentenceSpace \PrintReviews} {review}
}
\BibSpec{collection.article}{%
    +{}  {\PrintAuthors}                {author}
    +{,} { \textit}                     {title}
    +{.} { }                            {part}
    +{:} { \textit}                     {subtitle}
    +{,} { \PrintContributions}         {contribution}
    +{,} { \PrintConference}            {conference}
    +{}  {\PrintBook}                   {book}
    +{,} { }                            {booktitle}
    +{,} { \PrintDateB}                 {date}
    +{,} { pp.~}                        {pages}
    +{,} { }                            {publisher}
    +{,} { }                            {organization}
    +{,} { }                            {address}
    +{,} { }                            {status}
    +{,} { \PrintDOI}                   {doi}
    +{,} { \eprint}        {eprint}
    +{}  { \parenthesize}               {language}
    +{}  { \PrintTranslation}           {translation}
    +{;} { \PrintReprint}               {reprint}
    +{.} { }                            {note}
    +{.} {}                             {transition}
    +{}  {\SentenceSpace \PrintReviews} {review}
}
\BibSpec{misc}{%
  +{}{\PrintAuthors}  {author}
  +{,}{ \textit}      {title}
  +{.}{ }             {how}
  +{}{ \parenthesize} {date}
  +{,} { available at \eprint}        {eprint}
  +{,}{ available at \url}{url}
  +{,}{ }             {note}
  +{.}{}              {transition}
}

\usepackage{amssymb, amsfonts, amsxtra, amsmath}
\usepackage{mathrsfs}

\usepackage[all]{xy}

\usepackage{bbm}

\numberwithin{equation}{section}

\newtheorem{Theorem}{Theorem}[section]
\newtheorem{Def}[Theorem]{Definition}
\newtheorem{Lem}[Theorem]{Lemma}
\newtheorem{Prop}[Theorem]{Proposition}
\newtheorem{Cor}[Theorem]{Corollary}

\newtheorem{Rem}[Theorem]{Remark}

\newcommand\bp{\begin{proof}}
\newcommand\ep{\end{proof}}

\mathchardef\mhyph="2D

\usepackage{physics}

\usepackage[normalem]{ulem}

\date{}

\newcommand{\C}{\mathbb{C}}

\newcommand{\id}{\mathrm{id}}
\newcommand{\End}{\mathrm{End}}
\newcommand{\Mor}{\mathrm{Mor}}

\newcommand{\can}{\mathrm{can}}

\newcommand{\opp}{\mathrm{op}}
\newcommand{\splitting}{\mathrm{sp}}

\begin{document}
\title{A correspondence between homogeneous and Galois coactions of Hopf algebras}

\author{Kenny De Commer}
\address{Vrije Universiteit Brussel, Vakgroep wiskunde}
\email{kenny.de.commer@vub.be}

\author{Johan Konings}
\address{Vrije Universiteit Brussel, Vakgroep wiskunde}
\email{Johan.Konings@vub.be}

\thanks{The work of K.~De Commer was partially supported by the FWO grant G.0251.15N and the grant H2020-MSCA-RISE-2015-691246-QUANTUM DYNAMICS}

\begin{abstract}
A coaction of a Hopf algebra on a unital algebra  is called homogeneous if the algebra of coinvariants equals the ground field. A coaction of a Hopf algebra on a (not necessarily unital) algebra is called Galois, or principal, or free, if the canonical map, also known as the Galois map, is bijective. In this paper, we establish a duality between a particular class of homogeneous coactions, up to equivariant Morita equivalence, and Galois coactions, up to isomorphism. 
\end{abstract}

\maketitle

\section*{Introduction}

Galois coactions form a particular class of well-behaved coactions for Hopf algebras, providing at the same time a non-commutative generalization of freeness (also known as principality) of group actions on manifolds, and of the Galois condition for field extensions; see e.g. \cite{Schn90,Sch04} for an algebraic and \cite{BDCH17} for a topological approach. Their fundamental property is that they preserve tensor products upon considering associated vector bundles \cite{Ulb87,Ulb89,Sch04}. On the other hand, also the notion of homogeneity admits a direct generalization to the world of Hopf algebra coactions, by the requirement that the algebra of coinvariants is trivial. Especially in the setting of operator algebras, there has been considerable interest in homogeneous (also known as ergodic) actions of compact (quantum) groups on operator algebras, see e.g.~ \cite{Was89,Boc95,DCY13} and references therein.

In this paper, we want to set up a correspondence between homogeneous actions on the one hand, and free actions on the other. This correspondence is known in the operator algebraic setting, see already the fundamental work \cite{Was89} concerning compact group actions on von Neumann algebras, and see \cite[Section 9]{DC17} for a C$^*$-algebraic approach with respect to compact quantum group actions. However, in this paper we want to offer a purely algebraic approach, based upon the notion of \emph{equivariant semisimplicity}. In weeding out the analytic arguments, we hope that the fundamental nature of the above correspondence becomes clearer.

To keep the paper concise, we will not consider any concrete examples. However, we want to point out that the theory we construct is very closely related to the theory of \emph{weak multiplier Hopf algebras} \cite{VDW15,VDW17} and more precisely the theory of \emph{partial compact quantum groups} developed in \cite{DCT15}, which itself is a generalization of Hayashi's theory of compact quantum groups of face type \cite{Hay96}. We hope to revisit elsewhere the concrete correspondence and the associated construction of examples for the present theory, more specifically with respect to the notion of $I$-Galois object.

The paper is structured as follows. 

In the \emph{first section}, we recall some basic facts concerning algebras without unit, paying particular attention to the case of algebras with local units. Our main result concerns a Wedderburn decomposition for (absolutely) semisimple algebras which do not necessarily have a unit. 

In the \emph{second section}, we examine the notion of semisimplicity and Morita equivalence in the presence of a Hopf algebra coaction, called \emph{equivariant} or \emph{relative semisimplicity/Morita equivalence}. 

In the \emph{third section}, we establish our main results. We first introduce the notion of a homogeneous coaction of a Hopf algebra  on a unital algebra, being a coaction whose algebra of coinvariants equals the ground field. We then introduce the notion of an $I$-Galois object, being a Galois coaction whose algebra of coinvariants equals a direct sum of copies of the ground field. We then set up a one-to-one correspondence between equivariantly absolutely semisimple homogeneous coactions on the one hand, up to equivariant Morita equivalence, and (connected) $I$-Galois objects on the other hand, up to isomorphism. 

In the final \emph{fourth section}, we derive some structural results for $I$-Galois objects, notably the existence of invariant functionals and a Nakayama (or modular) automorphism.

\section{Semisimplicity for Algebras without Unit}

Fix a field $k$ and let $\otimes  = \underset{k}{\otimes}$ be the tensor product over $k$. Let $D$ be a $k$-algebra. We assume that $D$ is associative, but not necessarily unital. If $k\subseteq K$ is a field extension, we obtain by extension of scalars the $K$-algebra $D_K = K\underset{k}{\otimes} D$.

In the following definition we borrow the terminology of \cite{DVDZ99}. 

\begin{Def}
We call a $D$-module $V$ \emph{unital} if 
\[
V = DV := \{\sum_i x_i v_i\mid x_i\in D,v_i\in V\}.
\] 
We call a $D$-module $V$ \emph{non-degenerate} if $v\in V$ and $xv =  0$ for all $x\in D$ implies $v=0$. 

If $D$ is unital as a left module over itself by multiplication, we call $D$ \emph{idempotent}. We call $D$ \emph{non-degenerate} if it is non-degenerate as a left and as a right module over itself.
\end{Def}

\begin{Def}
We call a $D$-module \emph{simple} if $\{0\}\neq DV$ and $\{0\}$ and $V$ are the only submodules of $V$.

We call a $D$-module \emph{absolutely simple} if $V_K := K\otimes_k V$ is a simple $D_K$-module for all field extensions $k\subseteq K$. 

We call a $D$-module $V$ \emph{(absolutely) semisimple} if $V$ is a direct sum (possibly empty, possibly infinite) of (absolutely) simple $D$-modules. 

We call a $k$-algebra $D$ \emph{(absolutely) semisimple} if every unital $D$-module is (absolutely) semisimple.
\end{Def} 

Note that a semisimple module is automatically unital and non-degenerate. It is easy to see that a module $V$ is semisimple if and only if it is a (not necessarily direct) of simple submodules, see e.g.~ \cite[Theorem 4.2.3]{Cohn89}. We also have the following version of the Schur lemma and the Jacobson density theorem, see e.g. \cite[Remarks 4.2]{Dau72} and \cite[Theorem II.4]{Beh73}. We write $\Mor_D$ and $\End_D$ for the spaces of $k$-linear $D$-module maps.

\begin{Lem}\label{LemAbsEnk}
\begin{enumerate}
\item Let $V,W$ be simple $D$-modules. If $V,W$ are not isomorphic as $D$-modules, then $\Mor_D(V,W) = \{0\}$.
\item If $V$ is a simple $D$-module, then $\End_D(V)$ is a skew field.
\item If $V$ is a $D$-module, then the following assertions are equivalent. 
\begin{enumerate}
\item[(1)] $V$ is absolutely simple.
\item[(2)] $V$ is simple and $\End_D(V) = k\id_V$.
\item[(3)] For each $T\in \End_k(V)$ and each finite dimensional $k$-linear subspace $W \subseteq V$ there exists $x\in D$ with $xw = Tw$ for all $w\in W$.
\end{enumerate}
\end{enumerate}
\end{Lem}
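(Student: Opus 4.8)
The plan is to settle parts (1) and (2) with the usual Schur argument and then prove the equivalences in part (3) via the implications $(2)\Rightarrow(3)\Rightarrow(2)$ together with $(2)\Rightarrow(1)\Rightarrow(2)$. For (1) and (2): for a nonzero $\phi\in\Mor_D(V,W)$ with $V$ simple, $\ker\phi$ is a proper submodule of $V$ and hence $\{0\}$, while $\mathrm{im}\,\phi$ is a nonzero submodule of $W$ and hence all of $W$ when $W$ is simple, so $\phi$ is an isomorphism; this gives (1), and taking $W=V$ shows every nonzero element of the ring $\End_D(V)$ is invertible, which is (2). Before turning to (3) I would record, for a simple $D$-module $V$, that $DV$ is a nonzero submodule, hence $DV=V$, and that $\{v\in V:Dv=\{0\}\}$ is a submodule different from $V$, hence $\{0\}$; thus $V$ is unital and non-degenerate, with $Dv=V$ for every nonzero $v\in V$. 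It is also convenient to restate condition (3) as: for every finite-dimensional $k$-subspace $W\subseteq V$, the restriction map $\rho_W\colon D\to\Hom_k(W,V)$, $x\mapsto(w\mapsto xw)$, is surjective (given $T$ one applies this to $T|_W$, and conversely every element of $\Hom_k(W,V)$ extends to an endomorphism of $V$).

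The heart of the matter is $(2)\Rightarrow(3)$, a density argument by induction on $n=\dim_k W$: fixing a basis $v_1,\dots,v_n$ of $W$, I claim the map $x\mapsto(xv_1,\dots,xv_n)$ sends $D$ onto $V^n$. For $n=1$ this is $Dv_1=V$. For the inductive step, put $U=\{x\in D:xv_i=0\text{ for }i<n\}$, a left ideal of $D$, so that $Uv_n$ is a submodule of $V$ and hence $\{0\}$ or $V$. If $Uv_n=V$, one combines the inductive hypothesis for $v_1,\dots,v_{n-1}$ with an element of $U$ that fixes the first $n-1$ coordinates and corrects the last, so as to hit any prescribed target in $V^n$. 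If $Uv_n=\{0\}$, then $x\mapsto xv_n$ factors through $x\mapsto(xv_1,\dots,xv_{n-1})$ via a map $\psi\in\Mor_D(V^{n-1},V)=\End_D(V)^{n-1}=(k\,\id_V)^{n-1}$, say $\psi=(c_1\id_V,\dots,c_{n-1}\id_V)$; then $x\bigl(v_n-\textstyle\sum_j c_j v_j\bigr)=0$ for all $x\in D$, so non-degeneracy forces $v_n=\sum_j c_j v_j$, contradicting linear independence. Hence the second case cannot occur, the induction closes, and restricting to $W$ and applying the claim to the target $(Tv_1,\dots,Tv_n)$ yields (3). I expect this induction to be the main obstacle — specifically, verifying that the absence of a unit causes no trouble, which is exactly where $Dv=V$ and non-degeneracy enter.

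For $(3)\Rightarrow(2)$ (we may assume $V\neq\{0\}$): applying the reformulated (3) with $W=kv$ and $T=\id_V$ gives $DV\neq\{0\}$; for a nonzero submodule $N\ni u\neq0$ and arbitrary $v\in V$, condition (3) on $W=ku$ with $Tu=v$ produces $x$ with $xu=v$, whence $N=V$, so $V$ is simple; and if some $\phi\in\End_D(V)$ had $\phi(u)\notin ku$ for some $u\neq0$, then (3) on $ku+k\phi(u)$ with $Tu=u$, $T\phi(u)=0$ would give $x$ with $xu=u$ yet $0=x\phi(u)=\phi(xu)=\phi(u)\neq0$ by Schur, so $\phi(u)\in ku$ for every $u$ and $\phi\in k\,\id_V$ by the usual argument.

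For $(2)\Rightarrow(1)$, fix a field extension $k\subseteq K$: by $(2)\Rightarrow(3)$ every $\rho_W$ is onto; every finite-dimensional $K$-subspace of $V_K$ lies in some $K\otimes_k W$ with $W\subseteq V$ finite-dimensional over $k$, and under the natural isomorphism $\Hom_K(K\otimes_k W,V_K)\cong K\otimes_k\Hom_k(W,V)$ one has $\rho_{K\otimes_k W}=\id_K\otimes_k\rho_W$, which is onto by flatness of $K$ over $k$; thus $V_K$ satisfies (3) over $D_K$ and so is simple by $(3)\Rightarrow(2)$, making $V$ absolutely simple. Finally, for $(1)\Rightarrow(2)$: taking $K=k$ shows $V$ is simple, hence $\End_D(V)$ is a skew field; if $\lambda\in\End_D(V)\setminus k\,\id_V$, let $K$ be the subfield of $\End_D(V)$ generated over $k$ by $\lambda$, a commutative field properly containing $k$, which endows $V$ with a $D_K$-module structure via its $K$-action, and then the multiplication map $\mu\colon V_K\to V$, $\kappa\otimes v\mapsto\kappa v$, is a surjective $D_K$-module map with $\ker\mu\neq\{0\}$ (as $K\neq k$ and $V\neq\{0\}$), a proper nonzero submodule contradicting the simplicity of $V_K$; hence $\End_D(V)=k\,\id_V$.
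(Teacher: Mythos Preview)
Your proof is correct. The paper does not actually prove this lemma; it merely cites \cite{Dau72} (Remarks~4.2) and \cite{Beh73} (Theorem~II.4), so there is no in-paper argument to compare against. Your treatment is the standard one---Schur for (1) and (2), the inductive form of Jacobson density for $(2)\Rightarrow(3)$, a direct check for $(3)\Rightarrow(2)$, base change via flatness for $(2)\Rightarrow(1)$, and the commutative-subfield trick for $(1)\Rightarrow(2)$---and the care you take with the non-unital setting (recording $DV=V$ and $\{v:Dv=0\}=\{0\}$ for simple $V$, and using these precisely where the classical proof would silently invoke a unit) is exactly what is needed. Two cosmetic remarks: in $(3)\Rightarrow(2)$ the phrase ``by Schur'' is superfluous, since $\phi(u)\neq 0$ already follows from $\phi(u)\notin ku$; and in $(1)\Rightarrow(2)$ it is worth saying explicitly that $k(\lambda)$ lies inside $\End_D(V)$ because the latter is a skew field, which is what makes the transcendental case go through.
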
 

We will mainly be interested in a special class of non-unital algebras.

\begin{Def} 
Let $D$ be a $k$-algebra. We say $D$ has \emph{local units} if for each finite subset $\{x_1,\ldots,x_n\}\subseteq D$ there exists an \emph{idempotent} $e\in D$ with $e x_i = x_i = x_i e$ for all $i$.
\end{Def} 

The following lemma is straightforward.

\begin{Lem}\label{LemLocUn} 
Let $D$ be a $k$-algebra with local units. 
\begin{enumerate}
\item $D$ is idempotent and non-degenerate.
\item If $V$ is a unital $D$-module, then for each finite subset $\{v_i\}\subseteq V$ there exists an idempotent $e\in D$ with $ev_i = v_i$ for all $i$. In particular, $V$ is non-degenerate and every submodule of $V$ is also unital.
\end{enumerate}
\end{Lem}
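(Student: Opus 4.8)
The plan is to unwind the definitions and reduce each assertion to a single application of the local units hypothesis; the only thing to watch is that every relevant \emph{finite} subset of $D$ is absorbed by one idempotent at once.

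First I would treat (1). Fix $x\in D$ and apply the local units property to the singleton $\{x\}$: there is an idempotent $e\in D$ with $ex=x=xe$. Then $x=ex\in DD$, so $DD=D$, i.e. $D$ is unital as a left module over itself and hence idempotent. For non-degeneracy, if $x\in D$ satisfies $yx=0$ for all $y\in D$, then $x=ex=0$; and if $xy=0$ for all $y\in D$, then $x=xe=0$. Thus $D$ is non-degenerate as a left and as a right module over itself.

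Next, for (2), let $V$ be a unital $D$-module and $\{v_1,\dots,v_n\}\subseteq V$ a finite subset. Since $V=DV$, each $v_i$ can be written as a finite sum $v_i=\sum_j x_{ij}w_{ij}$ with $x_{ij}\in D$ and $w_{ij}\in V$. As $\{x_{ij}\}_{i,j}$ is a finite subset of $D$, the local units hypothesis produces an idempotent $e\in D$ with $ex_{ij}=x_{ij}$ for all $i,j$, and then $ev_i=\sum_j(ex_{ij})w_{ij}=\sum_j x_{ij}w_{ij}=v_i$ for each $i$, as desired. The consequences are then immediate: applying this to a singleton $\{v\}$ shows that $xv=0$ for all $x\in D$ forces $v=ev=0$, so $V$ is non-degenerate; and for a submodule $W\subseteq V$, each $w\in W$ satisfies $w=ew\in DW$ for a suitable idempotent $e\in D$, whence $W=DW$ is unital.

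There is essentially no obstacle: the lemma is exactly as straightforward as advertised. The one conceptual point worth flagging is that unitality of $V$ expresses each $v_i$ through \emph{finitely many} elements of $D$, which is precisely what makes the finitary local units hypothesis applicable; and one exploits the idempotency of the local units so that a single element $e$ can serve as a left unit on all of $v_1,\dots,v_n$ (equivalently, on the finitely many generating elements $x_{ij}$) simultaneously.
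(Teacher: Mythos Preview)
Your proof is correct and is precisely the straightforward verification the paper has in mind; the paper itself does not give a proof, merely declaring the lemma ``straightforward,'' and your argument spells out exactly the intended unwinding of definitions.
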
 
Let us recall the following terminology.

\begin{Def}  For $D$ a $k$-algebra, a $D$-module $V$ is called \emph{firm} if the map 
\[
D\otimes_D V\rightarrow V,\qquad d\otimes v \mapsto dv
\] 
is an isomorphism of $D$-modules.

We call $D$ a \emph{firm $k$-algebra} if it is firm as a left $D$-module over itself by multiplication.
\end{Def}
 
Clearly a firm module is unital. In the presence of local units, these notions are equivalent.

\begin{Lem}\label{LemFirm} 
Let $D$ be a $k$-algebra with local units. Then a $D$-module is firm if and only if it is unital.
\end{Lem}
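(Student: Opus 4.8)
The plan is to observe that one implication is free and to concentrate all the work on the other. If $V$ is firm, then the canonical map $D\otimes_D V\to V$ is in particular surjective, so $V=DV$, i.e. $V$ is unital; this uses nothing about local units, and is already noted in the text. For the converse, assume $D$ has local units and $V$ is a unital $D$-module, and write $\mu\colon D\otimes_D V\to V$, $d\otimes v\mapsto dv$, for the canonical map. It is visibly a morphism of left $D$-modules (the left action on $D\otimes_D V$ being via the first leg), and surjectivity of $\mu$ is precisely the hypothesis $V=DV$. So the entire content of the lemma is the injectivity of $\mu$.

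To prove injectivity, I would take an arbitrary element $\omega\in D\otimes_D V$ and fix a finite presentation $\omega=\sum_{i=1}^n d_i\otimes v_i$ with $d_i\in D$, $v_i\in V$. Applying the local units hypothesis to the finite set $\{d_1,\dots,d_n\}$, choose an idempotent $e\in D$ with $ed_i=d_i$ for every $i$. Then, using only the defining relation $xx'\otimes v=x\otimes x'v$ of the tensor product over $D$,
\[
\omega=\sum_{i=1}^n d_i\otimes v_i=\sum_{i=1}^n ed_i\otimes v_i=\sum_{i=1}^n e\otimes d_iv_i=e\otimes\Bigl(\sum_{i=1}^n d_iv_i\Bigr)=e\otimes\mu(\omega).
\]
In particular, if $\mu(\omega)=0$ then $\omega=e\otimes 0=0$, which gives injectivity; hence $\mu$ is an isomorphism of $D$-modules and $V$ is firm. (Alternatively one could invoke Lemma~\ref{LemLocUn}(2) to produce $e$, but the direct computation above is shortest, and note that only the left identity $ed_i=d_i$ is needed, not the two-sided property.)

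I do not expect a genuine obstacle here: this is just the standard trick of absorbing finitely many coefficients into a common local unit. The only points deserving a line of care are purely formal: that $D\otimes_D V$ is to be understood as the quotient of $D\otimes_k V$ by the subspace spanned by the elements $xx'\otimes v-x\otimes x'v$ (which is what legitimises the middle equality in the display), and that a single idempotent $e$ can be chosen to fix simultaneously all of the finitely many $d_i$ occurring in one chosen representative of $\omega$ — so that the argument is insensitive to the choice of representative, as it must be since the final expression $e\otimes\mu(\omega)$ only involves $\mu(\omega)$.
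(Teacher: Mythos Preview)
Your argument is correct and is exactly the standard trick. The paper in fact states this lemma without proof, so there is no argument to compare against; your write-up supplies precisely the expected justification.
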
 

In particular, a $k$-algebra with local units is firm.

In the following we develop some Morita theory for $k$-algebras with local units. Morita equivalence for rings without unit has been treated at a number of places \cite{Abr83, Kyu86, AM87, GaSi91, GaVi98, Mar98}. We follow mainly \cite{AM87}, where the specific case of rings with local units is treated.

\begin{Def} 
We call \emph{Morita context} a quadruple $\{A_{ij}\}_{i,j\in \{1,2\}}$ of $k$-linear vector spaces $A_{ij}$ for $i,j\in \{1,2\}$, together with $k$-bilinear maps 
\[
A_{ij}\times A_{jk} \rightarrow A_{ik},\qquad (x,y) \mapsto xy
\] 
such that for all $i,j,k,l\in \{1,2\}$ the associativity condition 
\[
(xy)z = x(yz),\qquad x\in A_{ij},y\in A_{jk},z\in A_{kl}
\] 
is satisfied.
\end{Def}

Note that in particular $D = A_{11}$ and $E = A_{22}$ are $k$-algebras, while $M = A_{12}$ is a $D$-$E$-bimodule and $N = A_{21}$ is a $E$-$D$-bimodule. The extra data consist of a $D$-bimodule map 
\[
M \otimes_E N \rightarrow D,\quad x\otimes y \mapsto xy,
\]
and an $E$-bimodule map
\[
N \otimes_D M \rightarrow E,\quad x\otimes y \mapsto xy.
\] 
We then also say that $(M,N)$ together with the above maps is a Morita context between $D$ and $E$. If $k \subseteq K$ is a field extension, we have that $(M_K,N_K) := (K\otimes_kM,K\otimes_k N)$ is a Morita context between $D_K$ and $E_K$.

\begin{Def}
We call a Morita context $\{A_{ij}\}$ \emph{surjective} if $A_{ij}A_{jk} = A_{ik}$ for all $i,j,k\in \{1,2\}$. We call a Morita context \emph{strict} if the four multiplication maps 
\[
A_{ij} \otimes_{A_{jj}} A_{ji} \rightarrow A_{ii},\qquad i,j\in\{1,2\}
\]
are isomorphisms. 
\end{Def} 
If the Morita context $\{A_{ij}\}$ is surjective, this means in particular that the $A_{ii}$ are idempotent $k$-algebras, and that $A_{ij}$ is unital as a left $A_{ii}$-module and as a right $A_{jj}$-module. If the Morita context is strict, we have moreover that the $A_{ii}$ are firm $k$-algebras.

\begin{Def}
We call firm $k$-algebras $D,E$ \emph{Morita equivalent} if there exists a strict Morita context between $D$ and $E$.
\end{Def}  

If $(M,N)$ is a strict Morita context between $D$ and $E$, and if $k\subseteq K$ is a field extension, then $(M_K,N_K)$ is a strict Morita context between $D_K,E_K$.

It is easy to see that a strict Morita context $(M,N)$ between firm $k$-algebras $D$ and $E$ induces an equivalence between the associated categories of firm modules by 
\[
V \mapsto W = N\otimes_D V,\qquad W \mapsto V = M\otimes_E W,
\]
and that Morita equivalence is indeed an equivalence relation \cite{GaVi98}. It follows in particular from Lemma \ref{LemFirm} that if $D$ and $E$ are Morita equivalent $k$-algebras with local units, then $D$ is (absolutely) semisimple if and only if $E$ is (absolutely) semisimple. We also note the following theorem.

\begin{Theorem}[\cite{AM87}, Theorem 2.2]\label{TheoAM} 
Let $D,E$ be $k$-algebras with local units. Then a Morita context between $D$ and $E$ is strict if and only if it is surjective.
\end{Theorem}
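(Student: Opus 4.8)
The plan is to prove the two implications separately. Throughout, write $D=A_{11}$, $E=A_{22}$, $M=A_{12}$, $N=A_{21}$, and let $\mu\colon M\otimes_E N\to D$ and $\nu\colon N\otimes_D M\to E$ denote the two ``off-diagonal'' multiplication maps. The two remaining maps occurring in the definition of strictness are $D\otimes_D D\to D$ and $E\otimes_E E\to E$; these are automatically isomorphisms, since $D$ and $E$, having local units, are firm (Lemma \ref{LemFirm} and the remark after it). So in both directions the diagonal maps are free, and the problem reduces to comparing the bijectivity of $\mu$ and $\nu$ with the six products $DM$, $ME$, $EN$, $ND$, $MN$, $NM$ (the products $DD$ and $EE$ being automatic from Lemma \ref{LemLocUn}(1)).

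For ``surjective $\Rightarrow$ strict'': surjectivity of $\mu$ is immediate from $MN=A_{12}A_{21}=A_{11}=D$, and similarly for $\nu$, so the substance is the injectivity of $\mu$, and this is exactly where the local units are used. Given $t=\sum_i m_i\otimes n_i\in M\otimes_E N$ with $\mu(t)=\sum_i m_in_i=0$ in $D$, first use that $DM=M$ makes $M$ a unital $D$-module, so by Lemma \ref{LemLocUn}(2) there is an idempotent $e\in D$ with $em_i=m_i$ for all $i$; then, since $MN=D$, write $e=\sum_j p_jq_j$ with $p_j\in M$ and $q_j\in N$ (a finite sum). Moving the elements $q_jm_i\in NM=E$ across the balanced tensor product gives
\[
t=\sum_i(em_i)\otimes n_i=\sum_{i,j}p_j(q_jm_i)\otimes n_i=\sum_{i,j}p_j\otimes(q_jm_i)n_i=\sum_j p_j\otimes q_j\Bigl(\sum_i m_in_i\Bigr)=0 .
\]
Hence $\mu$ is bijective, and interchanging the roles of $(D,M)$ and $(E,N)$ shows $\nu$ is bijective as well.

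For ``strict $\Rightarrow$ surjective'': bijectivity of $\mu$ and $\nu$ already gives $MN=D$ and $NM=E$, while $DD=D$ and $EE=E$ hold because $D,E$ have local units. Since $DM=(MN)M=M(NM)=ME$ and, symmetrically, $EN=ND$, and since the statement is invariant under the $1\leftrightarrow2$ swap, it suffices to prove $DM=M$. So set $S:=DM=ME\subseteq M$, a left $D$-submodule which is also a right $E$-submodule, and form the quotient bimodule $Q:=M/S$, on which both $D$ and $E$ act as zero. Tensoring $0\to S\to M\to Q\to 0$ over $E$ with $N$ and composing with $\mu$, the composite $S\otimes_E N\to M\otimes_E N\to D$ has image $SN=D(MN)=D$, all of $D$; since $\mu$ is an isomorphism, the map $S\otimes_E N\to M\otimes_E N$ is therefore onto, whence $Q\otimes_E N=0$. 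As $E$ annihilates $Q$, one checks $Q\otimes_E N\cong Q\otimes_k(N/EN)$, so $Q\otimes_k(N/EN)=0$; the symmetric computation with $\nu$ gives likewise $(N/EN)\otimes_k(M/DM)=0$, i.e.\ the same vanishing. This yields the dichotomy $Q=0$ or $N/EN=0$, and a further argument, exploiting once more that $D$ and $E$ themselves have local units (hence are non-degenerate, Lemma \ref{LemLocUn}(1)), is needed to exclude the mixed case and conclude $Q=0$, i.e.\ $DM=M$.

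I expect the injectivity of $\mu$ — the ``insert a local unit and split it through $MN$'' manoeuvre — to be the technical heart of the argument: it is short, but one must check that every product lands in the correct corner $A_{ij}$ and that the manipulations across the balanced tensor product are legitimate, and it is precisely here that Lemma \ref{LemLocUn}(2), rather than mere idempotency or firmness of $D$, is indispensable. In the reverse direction, the delicate point is the last step, passing from the dichotomy to $Q=0$ outright — equivalently, ruling out that $M$ fails to be unital over $D$ while $N$ stays unital over $E$; this is the one place in that direction where the local units of $D$ and $E$, as opposed to surjectivity of the connecting maps alone, are genuinely used.
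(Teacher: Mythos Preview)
The paper gives no proof of this statement: it is simply quoted from \cite{AM87}, Theorem~2.2, and only the implication ``surjective $\Rightarrow$ strict'' is ever invoked later (in the proofs of Theorem~\ref{TheoMainSemiSimp} and Proposition~\ref{PropEqMorDual}). Your argument for that implication is correct and is the standard one: the diagonal maps are isomorphisms because $D,E$ are firm, surjectivity of $\mu,\nu$ is immediate from $MN=D$ and $NM=E$, and your ``insert a local unit for the $m_i$ and factor it through $MN$'' computation for the injectivity of $\mu$ is exactly the argument in \cite{AM87}.

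Your hesitation about the converse is well founded: with the paper's definitions as written, ``strict $\Rightarrow$ surjective'' is \emph{false}, and the ``mixed case'' you could not exclude actually occurs. Take $D=E=R$ any $k$-algebra with local units, $N=R$ with its natural bimodule structure, and $M=R\oplus X$ for any nonzero vector space $X$, with $d\cdot(r,x)=(dr,0)$, $(r,x)\cdot e=(re,0)$, and pairings $(r,x)\cdot s=rs\in D$, $s\cdot(r,x)=sr\in E$. All associativity constraints of a Morita context are satisfied. In $M\otimes_E N$, pick for each $n$ an idempotent $e\in E$ with $en=n$; then $(r,x)\otimes n=(r,x)e\otimes n=(re,0)\otimes n$, so the $X$-component is killed and $\mu\colon M\otimes_E N\to D$ is an isomorphism; the same trick with a right local unit for $n\in N=R$ shows $\nu\colon N\otimes_D M\to E$ is an isomorphism. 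Thus the context is strict, yet $DM=R\oplus 0\neq M$, so it is not surjective. No amount of non-degeneracy of $D$ and $E$ rescues this.

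The resolution is that in \cite{AM87} a Morita context between rings with local units is taken with $M,N$ \emph{unital} as bimodules; under that convention the conditions $DM=M$, $ME=M$, $EN=N$, $ND=N$ are part of the data, and the equivalence reduces to ``$\mu,\nu$ surjective $\Leftrightarrow$ $\mu,\nu$ bijective'', for which your first paragraph is a complete proof. So the substantive half of your write-up is correct and matches the cited source; the other half should be read with that implicit unitality hypothesis, and your self-diagnosed gap is not a gap in your reasoning but in the literal statement.
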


We can easily characterize absolute semisimplicity through Morita equivalence. 

\begin{Theorem}\label{TheoSemisimp}
Let $D$ be a $k$-algebra with local units. Then $D$ is absolutely semisimple if and only if $D$ is Morita equivalent with $k_I$ for some set $I$, where $k_I = \oplus_{i\in I}k$ is the direct sum $k$-algebra over the (possibly infinite) index set $I$ of the trivial $k$-algebras $k$.
\end{Theorem}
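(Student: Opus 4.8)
The plan is to prove the two implications separately, the forward direction requiring the real work. For the implication that Morita equivalence with $k_I$ forces $D$ to be absolutely semisimple, I would first observe that $k_I$ is itself a $k$-algebra with local units (the finite partial sums of the minimal idempotents $e_i$ are local units) and that it is absolutely semisimple: any unital $k_I$-module $V$ decomposes as $V=\bigoplus_i e_iV$, each $e_iV$ is a $k$-vector space on which $e_i$ acts as the identity and all other $e_j$ as $0$, hence a direct sum of copies of the one-dimensional $k_I$-module $L_i$, and $L_i$ is absolutely simple since $\End_{k_I}(L_i)=k\,\id$ (Lemma \ref{LemAbsEnk}(3), $(2)\Rightarrow(1)$). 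Since $D$ and $k_I$ are then Morita equivalent $k$-algebras with local units, $D$ inherits absolute semisimplicity from $k_I$ by the remark following Lemma \ref{LemFirm}.

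For the converse, suppose $D$ is absolutely semisimple with local units. Being idempotent (Lemma \ref{LemLocUn}), $D$ is a unital module over itself, so $D=\bigoplus_{j\in J}P_j$ with each $P_j$ an absolutely simple left $D$-module; each $P_j$ is cyclic (a simple module over an algebra with local units is unital and non-degenerate by Lemma \ref{LemLocUn}, hence of the form $Dv$) and $\End_D(P_j)=k\,\id$. Let $I$ be the set of isomorphism classes occurring among the $P_j$, fix representatives $\{S_i\}_{i\in I}$, and set
\[
M=\bigoplus_{i\in I}S_i,\qquad N=\bigoplus_{i\in I}\Mor_D(S_i,D),
\]
with $M$ a left $D$- and right $k_I$-module, and $N$ a right $D$-module via $(\phi\cdot d)(v)=\phi(v)d$ and a left $k_I$-module, the $k_I$-structures in both cases coming from the coordinate projections. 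I would define $\mu\colon M\otimes_{k_I}N\to D$ by $(m_i)_i\otimes(\phi_i)_i\mapsto\sum_i\phi_i(m_i)$, and $\nu\colon N\otimes_DM\to k_I$ as the zero map on the blocks $\Mor_D(S_i,D)\otimes_DS_j$ with $i\neq j$ and, on the block $i=j$, by $\phi\otimes m\mapsto\langle\phi,m\rangle e_i$, where $\langle\phi,m\rangle\in k$ is the unique scalar with $\phi(v)m=\langle\phi,m\rangle v$ for all $v\in S_i$ (legitimate because $v\mapsto\phi(v)m$ lies in $\End_D(S_i)=k\,\id$). One also checks that $N$ is unital as a right $D$-module, using cyclicity of the $S_i$ together with the local units of $D$.

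The single algebraic fact that makes everything run is: if $U,V$ are simple left ideals of $D$ with $U\not\cong V$, then $UV=0$ — indeed $UV$ is a submodule of $V$, and $UV=V$ would exhibit $V$ as a quotient of the semisimple module $U\otimes_kV\cong\bigoplus U$, forcing $V\cong U$ by Schur (Lemma \ref{LemAbsEnk}(1)). Granting this, the $D$-balancedness of $\nu$ and the associativity axioms of a Morita context reduce to short computations; the only non-formal associativity identities, $\mu(m\otimes n)\cdot m'=m\cdot\nu(n\otimes m')$ and $n\cdot\mu(m\otimes n')=\nu(n\otimes m)\cdot n'$, follow respectively from the defining property of $\langle\,\cdot\,,\cdot\,\rangle$ and from $D$-linearity of the maps in $N$, once the off-diagonal terms are annihilated by $UV=0$. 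For surjectivity: the image of $\mu$ is the sum of all simple submodules of $D$ (each $\phi(S_i)$ is one, and conversely every simple submodule of $D$ is isomorphic to some $S_i$, hence already in the image), which equals $D$ by semisimplicity; surjectivity of $\nu$ amounts to $\langle\,\cdot\,,\cdot\,\rangle$ being nonzero on $\Mor_D(S_i,D)\times S_i$ for each $i$, which — taking $\phi$ to be the inclusion $S_i=P_i\hookrightarrow D$ — reduces to $P_iP_i\neq0$, true because a square-zero left ideal $L$ would satisfy $LD=0$, contradicting non-degeneracy of $D$ (Lemma \ref{LemLocUn}); and the remaining surjectivity conditions $DM=M$, $Mk_I=M$, $ND=N$, $k_IN=N$ and idempotence of $D$ and $k_I$ are immediate from unitality. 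Since $D$ and $k_I$ are algebras with local units, Theorem \ref{TheoAM} then promotes this surjective Morita context to a strict one, proving $D$ Morita equivalent to $k_I$.

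I expect the main obstacle to be largely organizational: keeping straight the eight surjectivity conditions and the sixteen associativity identities of the Morita context, verifying that $N$ is right-$D$-unital, and — most importantly — isolating the two clean facts above (that $UV=0$ for non-isomorphic simple left ideals, and that $D$ has no nonzero square-zero left ideals) which kill the off-diagonal contributions and force $\nu$ to be surjective.
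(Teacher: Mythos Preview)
Your proposal is correct, but it takes a more hands-on route than the paper's proof. The paper argues categorically: letting $\{V_i\}_{i\in I}$ be a maximal family of non-isomorphic simple $D$-modules, the functor $V\mapsto\bigoplus_i\Mor_D(V_i,V)$ is shown (using absolute semisimplicity and Lemma~\ref{LemAbsEnk}) to be an equivalence between unital $D$-modules and $I$-graded $k$-vector spaces, and then \cite[Theorem~2.1]{AM87} converts this category equivalence directly into a Morita equivalence between $D$ and $k_I$. You instead build the surjective Morita context $(M,N)$ explicitly---with $M=\bigoplus_i S_i$ realized inside $D$ as simple left ideals and $N=\bigoplus_i\Mor_D(S_i,D)$---and then invoke Theorem~\ref{TheoAM} (which is \cite[Theorem~2.2]{AM87}) to upgrade surjectivity to strictness. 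Your approach trades a single citation for a fair amount of bookkeeping (the associativity and balancedness checks), but it has the virtue of making the bimodules and pairings completely explicit; in fact it is very close in spirit to how the paper later proves Theorem~\ref{TheoMainSemiSimp}(1). One small remark: your claim that a square-zero simple left ideal $L$ satisfies $LD=0$ deserves one more sentence---it follows from your $UV=0$ lemma together with the observation that for $P_j\cong L$ any isomorphism $\psi\colon P_j\to L$ gives $\psi(LP_j)=L\psi(P_j)=L^2=0$, whence $LP_j=0$ as well. Your backward direction agrees with the paper's (implicit) argument, namely the remark preceding Theorem~\ref{TheoSemisimp} that Morita equivalence between algebras with local units preserves absolute semisimplicity.
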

Note that $k_I$ can be identified with the algebra of finite support $k$-valued functions on $I$. 
\begin{proof}
It is clear that the abelian category $\mathscr{D}$ of unital $D$-modules coincides with the category of unital $D$-modules for $D$ interpreted as a non-unital ring as in \cite{AM87}. On the other hand, the abelian category $\mathscr{K}_I$ of unital $k_I$-modules can be identified with the category of $I$-graded $k$-vector spaces. 

Let now $\{V_i\mid i\in I\}$ be a maximal collection of mutually non-isomorphic simple objects in $\mathscr{D}$. By absolute semisimplicity, and in particular Lemma \ref{LemAbsEnk}, it follows that
\[
F: \mathscr{D} \rightarrow \mathscr{K}_I,\quad  V \mapsto \oplus_{i\in I} \Mor_D(V_i,V),\qquad f \mapsto ((g_i)_i \mapsto (f\circ g_i)_i)
\]
defines an equivalence of categories. Hence $D$ and $k_I$ are Morita equivalent by \cite[Theorem 2.1]{AM87}. 
\end{proof}

We obtain from this theorem a Wedderburn decomposition for absolutely semisimple $k$-algebras with local units. We again first make an auxiliary definition. 

\begin{Def}\label{DefFinRank} 
Let $V$ be a vector space over $k$ with dual vector space $V^*$. Let $W\subseteq V^*$ be a subspace which separates the elements of $V$. Then we call an operator $T: V\rightarrow V$ of $W$-\emph{finite rank} if there are finitely many $\omega_i \in W$ and finitely many $v_i \in V$ such that 
\[
Tv = \sum_i \omega_i(v)v_i.
\] 
We denote by $M_W(V)$ the space of $W$-finite rank operators. 
\end{Def} 

For $\omega \in V^*$ and $v\in V$ we will in the following denote 
\[
T_{v,\omega}: V \rightarrow V,\qquad w\mapsto \omega(w)v.
\]
We then obtain that $M_W(V)$ is the linear span of all $T_{v,\omega}$ for $\omega\in W$ and $v\in V$. The space $M_W(V)$ is clearly an algebra, as  
\begin{equation}\label{EqCompFR} 
T_{v,\omega}T_{v',\omega'} = \omega(v') T_{v,\omega'}.
\end{equation} 

\begin{Lem} 
The algebra $M_W(V)$ has local units.
\end{Lem}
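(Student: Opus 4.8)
The plan is to show directly that $M_W(V)$ has local units by exhibiting, for any finite subset of $M_W(V)$, an explicit idempotent acting as a two-sided unit on it. Since every element of $M_W(V)$ is a finite linear combination of operators $T_{v,\omega}$ with $v \in V$, $\omega \in W$, a finite subset $\{S_1,\ldots,S_n\}$ of $M_W(V)$ involves only finitely many vectors $v_1,\ldots,v_r \in V$ and finitely many functionals $\omega_1,\ldots,\omega_s \in W$. So it suffices to produce an idempotent $e \in M_W(V)$ with $e v_j = v_j$ for $j = 1,\ldots,r$ and $\omega_l \circ e = \omega_l$ for $l = 1,\ldots,s$; the composition rule \eqref{EqCompFR} then gives $e T_{v_j,\omega_l} = T_{ev_j,\omega_l} = T_{v_j,\omega_l}$ and $T_{v_j,\omega_l} e = T_{v_j, \omega_l \circ e} = T_{v_j,\omega_l}$, hence $e S_i = S_i = S_i e$ for all $i$.

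Next I would construct such an $e$ using the fact that $W$ separates points of $V$. Let $U \subseteq V$ be the finite-dimensional subspace spanned by $v_1,\ldots,v_r$, say with basis $u_1,\ldots,u_m$. Because $W$ separates points of $V$, the restriction map $W \to U^*$ is surjective (a standard finite-dimensional linear-algebra fact: a point-separating subspace of $V^*$ restricts onto the dual of any finite-dimensional subspace), so we may choose $\eta_1,\ldots,\eta_m \in W$ that restrict to the dual basis of $u_1,\ldots,u_m$, i.e. $\eta_p(u_q) = \delta_{pq}$. Set
\[
e := \sum_{p=1}^m T_{u_p,\eta_p} \in M_W(V).
\]
Then for any $v = \sum_q c_q u_q \in U$ we get $e v = \sum_p \eta_p(v) u_p = \sum_p c_p u_p = v$, so $e v_j = v_j$ for all $j$. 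Idempotency follows from \eqref{EqCompFR}: $e^2 = \sum_{p,q} T_{u_p,\eta_p} T_{u_q,\eta_q} = \sum_{p,q} \eta_p(u_q) T_{u_p,\eta_q} = \sum_{p,q} \delta_{pq} T_{u_p,\eta_q} = \sum_p T_{u_p,\eta_p} = e$.

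The remaining issue is the right-hand condition $\omega_l \circ e = \omega_l$, which will generally fail for the $e$ above, since $e$ only "sees" the subspace $U$. To fix this I would enlarge the construction symmetrically: together with the chosen $\eta_1,\ldots,\eta_m$, take the finite-dimensional subspace $W_0 \subseteq W$ spanned by $\omega_1,\ldots,\omega_s,\eta_1,\ldots,\eta_m$, pick vectors $w_1,\ldots,w_t \in V$ such that the $\eta$'s extend to a basis and the $w$'s realize the appropriate dual relations — concretely, since $W$ separates points of $V$ one can choose a finite-dimensional $U' \supseteq U$ and a basis $u'_1,\ldots,u'_M$ of $U'$ together with $\eta'_1,\ldots,\eta'_M \in W$ spanning a space containing $W_0$ and satisfying $\eta'_p(u'_q) = \delta_{pq}$ (this is the standard biorthogonalization: extend $U$ to also contain enough vectors to pair with $\omega_1,\ldots,\omega_s$, which is possible precisely because each $\omega_l \ne 0$ on $V$ unless it is already zero). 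Then $e' := \sum_{p=1}^M T_{u'_p,\eta'_p}$ is idempotent by the same computation, satisfies $e' v_j = v_j$, and now also $\omega_l \circ e'$ agrees with $\omega_l$ on the span of the $u'_p$; choosing $U'$ large enough that $\omega_l$ is determined by its values there — which is automatic once the $\eta'$ span $W_0 \ni \omega_l$, since then $\omega_l = \sum_p \omega_l(u'_p)\eta'_p$ — gives $\omega_l \circ e' = \omega_l$. The only genuine obstacle is this bookkeeping of simultaneously biorthogonalizing a finite set of vectors and a finite set of functionals; once one sets it up cleanly (choose $U'$ finite-dimensional containing $v_1,\ldots,v_r$ and such that $W \to (U')^*$ restricts $W_0$ isomorphically onto a subspace, then take a basis of $U'$ and pull back its dual basis through a splitting inside $W$), everything else is the two routine applications of \eqref{EqCompFR} above. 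I expect no serious difficulty, only the need to phrase the finite-dimensional duality argument carefully so that both one-sided identities hold for the same idempotent.
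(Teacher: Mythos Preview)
Your proposal is correct and follows essentially the same approach as the paper. Both arguments construct the local unit as $e=\sum_p T_{u_p,\eta_p}$ for a biorthogonal system $(u_p,\eta_p)$ in $V\times W$ whose spans contain the finitely many vectors and functionals involved; the paper simply asserts in one sentence that given finite-dimensional $P\subseteq V$ and $Q\subseteq W$ one may enlarge to $P'\supseteq P$, $Q'\supseteq Q$ with $Q'\cong (P')^*$ by restriction, whereas you work through this biorthogonalization more explicitly. One small remark on your wording: the phrase ``choosing $U'$ large enough that $\omega_l$ is determined by its values there'' is misleading in infinite dimensions, but your actual argument does not use this --- what you really need, and correctly verify, is $\omega_l\in\mathrm{span}(\eta'_p)$, which immediately gives $\omega_l\circ e'=\omega_l$ on all of $V$.
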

\begin{proof}If $Q \subseteq W$ and $P \subseteq V$ are finite-dimensional subspaces, there exist finite-dimensional subspaces $W \supseteq Q' \supseteq Q$ and $V \supseteq P' \supseteq P$ such that $Q' \cong (P')^*$ by restriction to $P'$. If then $(\omega_i,v_i)$ form dual bases of respectively $Q'$ and $P'$, then $e = \sum_i T_{v_i,\omega_i}$ is a local unit for the $T_{v,\omega}$ with $v\in P$ and $\omega \in Q$. 
\end{proof}

\begin{Theorem}\label{TheoMainSemiSimp} 
\begin{enumerate}
\item Let $\{V_i\mid i\in I\}$ be a collection of $k$-linear vector spaces, and let for each $i\in I$ be given a subspace $W_i \subseteq V_i^*$ separating the elements of $V_i$. Then the $k$-algebra 
\[
D = \oplus_{i\in I} M_{W_i}(V_i)
\] 
has local units and is absolutely semisimple. 
\item Each absolutely semisimple $k$-algebra with local units is isomorphic to an algebra of the above form.
\end{enumerate}
\end{Theorem}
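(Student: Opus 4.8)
The plan is to reduce both parts to the Morita\mhyph theoretic characterisation of absolute semisimplicity (Theorem~\ref{TheoSemisimp}), exploiting that for algebras with local units strictness and surjectivity of Morita contexts coincide (Theorem~\ref{TheoAM}).

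\emph{Part (1).} Dropping the indices $i$ with $V_i=\{0\}$ changes nothing (as $M_{W_i}(\{0\})=\{0\}$), so we may assume all $V_i\neq\{0\}$. That $D=\oplus_i M_{W_i}(V_i)$ has local units follows from the preceding lemma, a local unit for a finite set being the (orthogonal, finite) sum of local units of the relevant summands. For absolute semisimplicity I would write down a Morita context between $D$ and $k_I$ with $A_{12}=\oplus_i V_i$ and $A_{21}=\oplus_i W_i$, the structure maps on the $i$\mhyph th component being evaluation $M_{W_i}(V_i)\times V_i\to V_i$, the assignment $(v,\omega)\mapsto T_{v,\omega}$ from $V_i\times W_i\to M_{W_i}(V_i)$, evaluation $W_i\times V_i\to k$, multiplication on $k$, and all mixed products zero. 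Associativity reduces to \eqref{EqCompFR} together with the evident identities for evaluation; surjectivity of the context amounts, componentwise, to $M_{W_i}(V_i)$ being idempotent, to $V_i$ and $W_i$ being unital as a left and a right $M_{W_i}(V_i)$\mhyph module, and to $W_i$ separating the points of $V_i$ (so that for $0\neq v\in V_i$ there is $\omega\in W_i$ with $\omega(v)=1$, and dually) --- all immediate from the separation hypothesis. By Theorem~\ref{TheoAM} the context is strict, so $D$ is Morita equivalent to $k_I$, hence absolutely semisimple by Theorem~\ref{TheoSemisimp}.

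\emph{Part (2).} Let $D$ be absolutely semisimple with local units; by Theorem~\ref{TheoSemisimp} fix a strict Morita context $(M,N)$ between $D$ and $k_I$, and let $p_i\in k_I$ be the idempotent supported at $i$. Set $V_i:=Mp_i$, a left $D$\mhyph module, and $W_i^\flat:=p_iN$, so $M=\oplus_i V_i$ and $N=\oplus_i W_i^\flat$. Transporting the algebra structure of $D$ along the bimodule isomorphism $D\cong M\otimes_{k_I}N$, the mixed terms $V_i\otimes_{k_I}W_j^\flat$ with $i\neq j$ vanish, so $D=\oplus_i D_i$ with $D_i$ the image of $V_i\otimes_k W_i^\flat$; each $D_i$ is a two\mhyph sided ideal, $D_iD_j=0$ for $i\neq j$, and on $V_i\otimes_k W_i^\flat\cong D_i$ the product reads $(v\otimes\eta)(v'\otimes\eta')=\langle\eta,v'\rangle\,(v\otimes\eta')$, where the bilinear form $\langle\cdot,\cdot\rangle\colon W_i^\flat\times V_i\to k$ comes from $W_i^\flat\otimes_D V_i\cong p_i(N\otimes_D M)p_i\cong k$ (in particular $V_i\neq\{0\}$). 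Writing $\flat\colon W_i^\flat\to V_i^*$, $\eta\mapsto\langle\eta,\cdot\rangle$, and $W_i:=\flat(W_i^\flat)$, this product formula says exactly that the surjective map $V_i\otimes_k W_i^\flat\to M_{W_i}(V_i)$, $v\otimes\eta\mapsto T_{v,\flat\eta}$, is an algebra homomorphism, and that via $D_i\cong V_i\otimes_k W_i^\flat$ it coincides with the left action of $D_i$ on $V_i$.

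To finish I must show this homomorphism is injective, that $\flat$ is injective, and that $W_i$ separates the points of $V_i$; granting these, $D_i\cong M_{W_i}(V_i)$ and so $D\cong\oplus_i M_{W_i}(V_i)$. For injectivity: an element of $D_i$ annihilating $V_i$ annihilates all of $M=\oplus_j V_j$ (since $D_iV_j=0$ for $j\neq i$), hence annihilates $D\cong M\otimes_{k_I}N$ by left multiplication, hence is zero because $D$ has local units. Injectivity of $\flat$ then follows, for $\langle\eta,\cdot\rangle=0$ forces $v\otimes\eta$ to map to $0$ for all $v$, so $v\otimes\eta=0$ in $V_i\otimes_k W_i^\flat$ for all $v$, whence $\eta=0$ since $V_i\neq\{0\}$ and we work over a field. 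For separation: if $\langle\cdot,v\rangle=0$, choose an idempotent $\epsilon\in D_i$ with $\epsilon v=v$ (legitimate since $V_i$ is unital over the ideal $D_i$, which again has local units), write $\epsilon=\sum_l v_l\otimes\eta_l$, and obtain $v=\epsilon v=\sum_l\langle\eta_l,v\rangle v_l=0$. The substance of the argument is precisely this last paragraph --- teasing out enough of the Morita data to see that $D_i$ acts faithfully on $V_i$ and that the pairing is nondegenerate on each side; the rest is bookkeeping with direct sums and local units.
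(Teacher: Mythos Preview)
Your proof is correct and follows essentially the same approach as the paper: both parts are reduced to Theorem~\ref{TheoSemisimp} via a (surjective, hence strict) Morita context between $D$ and $k_I$, with the $k_I$-grading yielding the decomposition $V=\oplus_i V_i$, $W=\oplus_i W_i$ and the identification $D\cong\oplus_i V_i\otimes W_i\cong\oplus_i M_{W_i}(V_i)$. The only minor difference is in how nondegeneracy of the pairing is verified in Part~(2): the paper argues via the Morita isomorphism $W\otimes_D V\otimes_{k_I}W\cong W$, whereas you argue more concretely via faithfulness of the $D_i$-action on $V_i$ and a local-unit computation --- both are valid and equally short.
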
 

\begin{proof} 
(1) We have that  $D =  \oplus_{i\in I} M_{W_i}(V_i)$ has local units as each of its summands does. Putting $V  = \oplus_i V_i$ and $W = \oplus_i W_i$, we can easily define a surjective Morita context $(V,W)$ between $D$ and $k_I$. Hence $D$ and $k_I$ are Morita equivalent by Theorem \ref{TheoAM}, and in particular $D$ is absolutely semisimple. 

(2) Let $D$ be an absolutely semisimple $k$-algebra with local units. By Theorem \ref{TheoSemisimp}, there exists a strict Morita context $(V,W)$ between $D$ and $k_{I}$ for some index set $I$. The $k_I$-module structure gives decompositions $V = \oplus_i V_i$ and $W = \oplus_i W_i$. If now $\omega \in W_i$ and $\omega(v) = 0$ for all $v\in V_i$, it follows by the Morita equivalence that 
\[
0 = \omega \otimes v \in W\otimes_D V,\qquad \forall v\in V.
\]
Using that 
\[
W\otimes_D V \otimes_{k_I} W \cong W_{\otimes D} D \cong W,
\]
we conclude that $\omega = 0$. It follows that each pairing $W_i \times V_i \rightarrow k$ is non-degenerate, so that we can identify $W_i \subseteq V_i^*$. In a similar way it can be shown that $W_i$ separates the elements in $V_i$. As we have an isomorphism of algebras $V_i \otimes_k W_i \cong M_{W_i}(V_i)$, we obtain
\[
D \cong V \otimes_{k_I} W \cong \oplus_i (V_i \otimes W_i) \cong \oplus_i M_{W_i}(V_i). 
\]
\end{proof} 

\begin{Cor}\label{CorMorContSem} 
Let $D$ be an absolutely semisimple $k$-algebra with local units, and let $\{V_i\}_{i\in I}$ be a maximal collection of inequivalent simple $D$-modules. Let $(M,N)$ establish a Morita equivalence between $D$ and $k_{J}$. Then $I\cong J$ as sets, and $M \cong \oplus_{i\in I} V_i$ as a left $D$-module.
\end{Cor}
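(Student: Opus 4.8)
The plan is to reduce everything to the equivalence of categories attached to the strict Morita context. Recall from the discussion preceding Theorem~\ref{TheoAM} that $(M,N)$ induces mutually quasi-inverse equivalences $\Psi = N\otimes_D(-)$ from the category $\mathscr{D}$ of unital $D$-modules to the category $\mathscr{K}_J$ of unital $k_J$-modules, and $\Phi = M\otimes_{k_J}(-)$ in the other direction, where (using Lemma~\ref{LemFirm}) I freely identify unital and firm modules over the algebras with local units $D$ and $k_J$. Note also two small points used throughout: since the context is strict, hence surjective (Theorem~\ref{TheoAM}), $M$ is unital both as a left $D$-module and as a right $k_J$-module; and a simple $D$-module is automatically unital, since if $V$ is simple then $DV$ is a nonzero submodule and therefore equals $V$. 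Consequently the simple $D$-modules are exactly the simple objects of $\mathscr{D}$.

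Next I would describe the simple objects of $\mathscr{K}_J$. As observed in the proof of Theorem~\ref{TheoSemisimp}, $\mathscr{K}_J$ is the category of $J$-graded $k$-vector spaces, a subobject being a graded subspace; hence its simple objects are, up to isomorphism, precisely the spaces $k_j$ ($j\in J$) consisting of a single copy of $k$ placed in degree $j$, and these are pairwise non-isomorphic. Moreover $k_J$, viewed as a left module over itself, decomposes as the direct sum $\bigoplus_{j\in J} k_j$. Since an equivalence of abelian categories sends simple objects to simple objects and both preserves and reflects isomorphism, the family $\{\Phi(k_j)\}_{j\in J}$ consists of pairwise non-isomorphic simple $D$-modules, and every simple $D$-module $S$, being a simple object of $\mathscr{D}$, satisfies $\Psi(S)\cong k_j$ for some $j$ and hence $S\cong\Phi(\Psi(S))\cong\Phi(k_j)$. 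Thus $\{\Phi(k_j)\}_{j\in J}$ is a maximal collection of inequivalent simple $D$-modules; comparing it with $\{V_i\}_{i\in I}$, which is another such, yields a bijection $I\to J$, $i\mapsto j(i)$, with $V_i\cong\Phi(k_{j(i)})$ for every $i\in I$.

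Finally I would compute $M$ directly. As $M$ is unital, hence firm (Lemma~\ref{LemFirm}), as a right $k_J$-module, we have $M\cong M\otimes_{k_J}k_J$; and since $\Phi = M\otimes_{k_J}(-)$ commutes with arbitrary direct sums, $M\cong\Phi\bigl(\bigoplus_{j\in J}k_j\bigr)\cong\bigoplus_{j\in J}\Phi(k_j)\cong\bigoplus_{i\in I}V_i$ as left $D$-modules, the last isomorphism being the reindexing along $i\mapsto j(i)$ combined with $V_i\cong\Phi(k_{j(i)})$. This gives both $I\cong J$ and $M\cong\bigoplus_{i\in I}V_i$.

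The substance of the argument is entirely in the first paragraph: one must make sure that the equivalence $\Psi,\Phi$, the identification of unital with firm modules, the unitality of $M$ on both sides, and the commutation of $\Phi$ with infinite direct sums are all legitimate in the non-unital "local units" setting. All of this has been arranged in Section~1, so no new idea is needed; the identification of the simple graded vector spaces and the passage to a maximal family are routine.
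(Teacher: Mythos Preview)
Your argument is correct and is essentially the approach the paper has set up: the corollary is stated without proof immediately after Theorem~\ref{TheoMainSemiSimp}, and your use of the categorical equivalence $\Phi = M\otimes_{k_J}(-)$, $\Psi = N\otimes_D(-)$ together with the identification $\mathscr{K}_J\cong$ $J$-graded vector spaces (already invoked in the proof of Theorem~\ref{TheoSemisimp}) is precisely the intended route. The only minor difference in emphasis is that the paper's placement suggests deducing the result from the explicit Wedderburn decomposition $D\cong\oplus_j M_{W_j}(M_j)$ of Theorem~\ref{TheoMainSemiSimp}(2), whence the $M_j$ are visibly simple and pairwise non-isomorphic; your direct categorical argument bypasses this and is if anything cleaner.
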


The Morita equivalence in Theorem \ref{TheoSemisimp} is obtained as a cutdown with a multiplier idempotent. More precisely, recall that the \emph{multiplier algebra} $M(D)$ of a $k$-algebra $D$ is the subalgebra 
\[
M(D) \subseteq \End_k(D)\oplus \End_k(D)^{\opp},
\] 
with $\End_k(D)$ the $k$-algebra of linear maps and $\End_k(D)^{\opp}$ the opposite $k$-algebra, consisting of $m = (\lambda,\rho)$ such that 
\[
\lambda(xy) = \lambda(x)y,\quad \rho(xy) = x\rho(y),\quad \lambda(\rho(x)) = \rho(\lambda(x)),\qquad \forall x,y\in D.
\] 
We then write 
\[
mx = \lambda(x),\qquad xm = \rho(x).
\]  
If $D$ is non-degenerate, and in particular if $D$ has local units, we obtain an embedding 
\[
D \rightarrow M(D),\quad x\mapsto x = (\lambda_x,\rho_x),\qquad \lambda_x(y) = xy,\quad \rho_x(y) = yx.
\] 

\begin{Def} 
If $D$ is a $k$-algebra with local units, we call an idempotent $p\in M(D)$ \emph{full} if $DpD = D$. 
\end{Def} 

It is easy to see that then $(Dp,pD)$ establishes (under the multiplication map) a  Morita equivalence between $D = DpD$ and $pDp$.  We have the following easy lemma. 

\begin{Lem}\label{LemFull}
If $D =\oplus_i M_{W_i}(V_i)$, then
\[
\prod_i M_{W_i}(V_i) \subseteq M(D),
\]
and with $\omega_i\in W_i,v_i \in V_i$ and $\omega_i(v_i) = 1$ for all $i$, the idempotent 
\[
p = \prod_i T_{v_i,\omega_i}  \in M(D)
\]
is a full idempotent establishing the Morita equivalence of $D$ and $pDp \cong k_I$.  
\end{Lem}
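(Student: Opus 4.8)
The plan is to establish the four assertions in turn, every computation reducing to the composition rule \eqref{EqCompFR} together with the normalisation $\omega_i(v_i)=1$.

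First I would treat the inclusion $\prod_i M_{W_i}(V_i)\subseteq M(D)$. Given $m=(m_i)_i$ in the direct product, define $\lambda_m,\rho_m\in\End_k(D)$ componentwise by $\lambda_m(x)=(m_ix_i)_i$ and $\rho_m(x)=(x_im_i)_i$; since every $x\in D$ has finite support these maps are well defined, and the three multiplier identities $\lambda_m(xy)=\lambda_m(x)y$, $\rho_m(xy)=x\rho_m(y)$ and $\lambda_m\rho_m=\rho_m\lambda_m$ hold because they hold in each summand $M_{W_i}(V_i)$. One checks directly that $m\mapsto(\lambda_m,\rho_m)$ is an algebra homomorphism into $M(D)$, and it is injective because each $M_{W_i}(V_i)$ has local units and is therefore non-degenerate. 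In particular $p=(T_{v_i,\omega_i})_i$ lies in $M(D)$, and since $T_{v_i,\omega_i}^2=\omega_i(v_i)T_{v_i,\omega_i}=T_{v_i,\omega_i}$ in each component by \eqref{EqCompFR}, we get $p^2=p$.

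Next I would prove fullness. It suffices to produce every generator $T_{w,\eta}$ of $D$, with $w\in V_i$ and $\eta\in W_i$, inside $DpD$. Placing $T_{w,\omega_i}$ and $T_{v_i,\eta}$ in the $i$-th summand of $D$ and applying \eqref{EqCompFR} twice gives $T_{w,\omega_i}\,p\,T_{v_i,\eta}=\omega_i(v_i)^2\,T_{w,\eta}=T_{w,\eta}$, so $DpD=D$. For the corner algebra, the analogous computation shows that for $x\in D$ the element $pxp$ has $i$-th component of the form $\lambda_i(x)\,T_{v_i,\omega_i}$, with only finitely many of the scalars $\lambda_i(x)\in k$ nonzero; conversely, for a finitely supported family $(\lambda_i)_i$ the element $x=\sum_i\lambda_i T_{v_i,\omega_i}\in D$ satisfies $pxp=(\lambda_i T_{v_i,\omega_i})_i$. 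Hence $pDp=\oplus_i k\,T_{v_i,\omega_i}$, and since the $T_{v_i,\omega_i}$ are mutually orthogonal idempotents sitting in distinct summands of $D$, this is isomorphic as a $k$-algebra to $k_I$. Finally, $D$ has local units and $p\in M(D)$ is a full idempotent, so the remark preceding the lemma applies and shows that $(Dp,pD)$ establishes a Morita equivalence between $D=DpD$ and $pDp\cong k_I$.

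No step here is genuinely difficult; the only points demanding attention are to keep the direct product (which maps into the multiplier algebra) carefully distinct from the direct sum $D$ itself, and to confirm that the defining identities of $M(D)$ for elements coming from the product really do follow componentwise. Everything else is bookkeeping with \eqref{EqCompFR}.
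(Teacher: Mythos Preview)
Your proof is correct. The paper does not actually supply a proof of this lemma, merely labeling it an ``easy lemma'' and leaving the verification to the reader; your argument is precisely the direct componentwise check the paper presumably had in mind, and every step (the multiplier embedding of the product, idempotency of $p$, fullness via \eqref{EqCompFR}, and the identification $pDp\cong k_I$) is handled correctly.
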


\section{Equivariant semisimplicity}

Let $(H,\Delta)$ be a Hopf $k$-algebra with counit $\varepsilon$ and antipode $S$, where we will use the Sweedler notation 
\[
\Delta(h) = h_{(1)}\otimes h_{(2)},\qquad h\in H.
\] 
We call (right) $H$-comodule algebra, or \emph{coaction}, any $k$-algebra $A$ endowed with a homomorphism 
\[
\alpha: A \rightarrow A\otimes H,\quad a \mapsto a_{(0)}\otimes a_{(1)}
\] 
that makes $A$ into an $H$-comodule.

\begin{Def} 
Let $(A,\alpha)$ be an $H$-comodule algebra. An \emph{$H$-equivariant, or $(H,A)$-relative module} for $(A,\alpha)$ consists of a $k$-linear vector space $V$ with an $A$-module structure and a right $(H,\Delta)$-comodule structure $\delta: V\rightarrow V\otimes H$ such that 
\[
\delta(av) = \alpha(a)\delta(v),\qquad a\in A,v\in V.
\]
\end{Def} 

We can extend the notions introduced for modules to $H$-equivariant $A$-modules: we call an $H$-equivariant $A$-module $(V,\delta)$ 

\begin{itemize}
\item unital if $V$ is unital as an $A$-module,
\item equivariantly simple if $\{0\}\neq AV$ and $\{0\}$ and $V$ are the only equivariant submodules of $V$,
\item equivariantly absolutely simple if $V_K$ is simple as an $H_K$-equivariant $A_K$-module for all field extensions $k\subseteq K$,
\item equivariantly semisimple if it is a direct sum of equivariantly simple $H$-equivariant $A$-modules,
\item equivariantly absolutely semisimple if it is a direct sum of equivariantly absolutely simple $H$-equivariant $A$-modules.
\end{itemize}

We call an $H$-comodule algebra $(A,\alpha)$ equivariantly (absolutely) semisimple if each unital $H$-equivariant $A$-module is equivariantly (absolutely) semisimple. 

The theory of the previous section could then be extended to the setting of $H$-equivariant $A$-modules. However, to apply the results \emph{directly}, we need to assume that $(H,\Delta)$ has more structure. We use the terminology of  \cite{VDZ99b}. 

\begin{Def} 
We say that a Hopf algebra $(H,\Delta)$ has \emph{invariant functionals}, or defines an \emph{algebraic quantum group of compact type}, if there exist non-zero functionals 
\[
\varphi: H \rightarrow k,\qquad \psi: H \rightarrow k,
\] 
called respectively \emph{left} and \emph{right} invariant functional, such that for all $h\in H$ 
\[
(\id\otimes \varphi)\Delta(h) = \varphi(h)1,\qquad (\psi\otimes \id)\Delta(h) = \psi(h)1.
\] 
We call $\varphi,\psi$ invariant \emph{integrals} if also $\varphi(1) = \psi(1) = 1$.
\end{Def} 

Note that the existence of $\varphi$ implies the existence of $\psi$, as one can take $\psi = \varphi \circ S$. We recall the following properties.

\begin{Prop}[\cite{VDZ99b}]\label{PropHopfInv} 
Let $(H,\Delta)$ be a Hopf algebra with invariant functionals $\varphi,\psi$. Then 
\begin{itemize}
\item the antipode $S$ is invertible,
\item the functionals $\varphi,\psi$ are unique up to multiplication with a scalar in $k^{\times}$,
\item there exists a (unique) invertible grouplike element $\delta\in H$ with 
\[
\varphi(S(h)) = \varphi(h\delta),\qquad \forall h\in H,
\]
which we call the \emph{modular element} or the \emph{distinguished grouplike},
\item there exists a (unique) algebra automorphism $\sigma$ of $H$ with 
\[
\varphi(ab) = \varphi(b\sigma(a)),\qquad \textrm{for all }a,b\in H,
\]
which we call the \emph{modular automorphism} or the \emph{Nakayama automorphism},
\item the functionals $\varphi,\psi$ are faithful, i.e.~ they span faithful left and right $H$-modules in the $k$-linear dual $H^*$ of $H$,
\item a left invariant integral $\varphi$ is also a right invariant integral, and in this case $\delta = 1$.
\end{itemize}
\end{Prop}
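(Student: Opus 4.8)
The statement assembles the foundational structure theory of a Hopf algebra equipped with invariant functionals, i.e.\ of an algebraic quantum group of compact type (equivalently, a co-Frobenius Hopf algebra). In the finite-dimensional case it is classical: by the Larson--Sweedler theorem $H$ is a Frobenius algebra with the integral as Frobenius functional, which already gives faithfulness and uniqueness, while bijectivity of $S$ and the modular data come from comparing the two one-sided Frobenius structures. My plan is to run the same skeleton in general, isolating as the single nonformal input the \emph{faithfulness} of $\varphi$ --- that the bilinear forms $(a,b)\mapsto\varphi(ab)$ and $(a,b)\mapsto\varphi(ba)$ on $H$ are non-degenerate --- and then deducing the six assertions from it. For the infinite-dimensional details I would follow \cite{VDZ99b}.

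To establish faithfulness I would first record the \emph{strong invariance} identity: for all $a,b\in H$,
\[
\sum S(b_{(1)})\,\varphi(a\,b_{(2)})\;=\;\sum a_{(1)}\,\varphi(a_{(2)}\,b),
\]
together with its mirror image for a right invariant $\psi$ (which exists --- one may take $\psi=\varphi\circ S$). Each is a finite Sweedler computation: the map $T\colon H\otimes H\to H\otimes H$, $a\otimes b\mapsto\sum a_{(1)}\otimes a_{(2)}b$, is bijective with inverse $a\otimes b\mapsto\sum a_{(1)}\otimes S(a_{(2)})b$ since $H$ is a Hopf algebra, and applying $\id\otimes\varphi$ to $T$ and to $T^{-1}$, together with left invariance of $\varphi$ and the fact that $\Delta$ is an algebra map, yields the identity. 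From these one deduces that $\{a\mid\varphi(ab)=0\ \text{for all}\ b\}=\{0\}$ and its left--right mirror, i.e.\ that $\varphi$, and likewise $\psi$, spans a faithful left and a faithful right $H$-module in $H^{*}$. This deduction is the technical crux and the one place I expect genuine work rather than bookkeeping: it is not a formal consequence of the Hopf axioms, and in \cite{VDZ99b} it is carried out in the broader multiplier Hopf algebra setting.

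With faithfulness available, uniqueness of $\varphi$ and $\psi$ up to a scalar in $k^{\times}$ follows from a standard argument using left invariance and the non-degeneracy, with $\psi=\varphi\circ S$ bridging the left and right cases. Invertibility of $S$ is the classical fact --- due to Radford in the Hopf-algebra case --- that a co-Frobenius Hopf algebra has bijective antipode; it rests on the non-degeneracy just obtained, and since $S$ is not yet known invertible one orders the arguments with a little care, a mild bootstrapping point. The modular element then arises as follows: the map $h\mapsto\sum\varphi(h_{(1)})h_{(2)}$ measures the failure of $\varphi$ to be right invariant, faithfulness forces its image to be one-dimensional, so $\sum\varphi(h_{(1)})h_{(2)}=\varphi(h)\delta$ for a fixed $\delta\in H$; applying $\Delta$ to this relation and re-using left invariance shows $\delta$ is grouplike and invertible, combining it with strong invariance gives $\varphi(S(h))=\varphi(h\delta)$, and uniqueness of $\delta$ is immediate from faithfulness.

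For the Nakayama automorphism, faithfulness makes $a\mapsto\varphi(a\,\cdot\,)$ and $b\mapsto\varphi(\,\cdot\,b)$ two injective maps $H\to H^{*}$ with the same image --- another consequence of the strong invariance identities --- so there is a unique linear $\sigma\colon H\to H$ with $\varphi(ab)=\varphi(b\,\sigma(a))$ for all $a,b$; that $\sigma$ is an algebra homomorphism with $\sigma(1)=1$ comes from comparing $\varphi(abc)=\varphi(c\,\sigma(ab))$ with $\varphi(abc)=\varphi(bc\,\sigma(a))=\varphi(c\,\sigma(a)\sigma(b))$ and invoking faithfulness, while the symmetric construction (the map $\sigma'$ with $\varphi(ab)=\varphi(\sigma'(b)a)$) provides $\sigma^{-1}$. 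Finally, if $\varphi(1)=1$ then $\varphi$ is automatically two-sided --- by the dual Maschke theorem $H$ is cosemisimple and the integral bi-invariant --- so comparing $\sum\varphi(h_{(1)})h_{(2)}=\varphi(h)\delta$ with right invariance on some $h$ with $\varphi(h)\neq 0$ forces $\delta=1$. Everything after Step 2 is essentially mechanical; the non-degeneracy of $\varphi$ is the sole real obstacle.
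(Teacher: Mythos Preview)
The paper does not prove this proposition: it is stated with the citation \cite{VDZ99b} and recalled without argument, as background from the literature on algebraic quantum groups. There is therefore no ``paper's own proof'' to compare against.

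Your sketch is a faithful outline of the standard proof in the cited source (and in Van Daele's earlier \cite{VD98}). You correctly isolate faithfulness of $\varphi$ as the nonformal core, derive it from the strong-invariance identity, and then obtain uniqueness, bijectivity of $S$, the modular element $\delta$, and the Nakayama automorphism $\sigma$ as formal consequences of non-degeneracy. One minor remark: for the last item you invoke cosemisimplicity via a dual Maschke argument, but in fact the implication is more direct --- from $(\varphi\otimes\id)\Delta(h)=\varphi(h)\delta$ one gets $\varphi(h)\varepsilon(\delta)=\varphi(h)$ by applying $\varepsilon$, so $\varepsilon(\delta)=1$; if moreover $\varphi(1)=1$ then evaluating the defining relation at $h=1$ gives $\delta=1$ immediately, whence $\varphi\circ S=\varphi$ and $\varphi$ is right invariant. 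This avoids the detour through cosemisimplicity.
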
 

In the following we fix a Hopf algebra $(H,\Delta)$ with invariant functionals $\varphi,\psi = \varphi\circ S$. The previous proposition implies the equality of the subsets 
\[ 
\{\varphi(-h)\mid h\in H\} = \{\varphi(h-)\mid h\in H\}  = \{\psi(-h)\mid h\in H\} = \{\psi(h-)\mid h\in H\}
\] 
inside the linear dual $H^*$  of $H$, and we write this set as $\hat{H}$. By the second to last item, the space of functionals $\hat{H}$ separates the elements of $H$. Moreover, if we consider $H^*$ with the convolution product 
\[
(\omega*\chi)(h) = (\omega\otimes \chi)\Delta(h),
\]
then $\hat{H}$ is a $k$-subalgebra of $H^*$ possessing local units (and non-unital for $H$ infinite-dimensional), see \cite[Proposition 3.1]{VDZ99a}. In fact, $\hat{H}$ is a 2-sided dense ideal in $H^*$, where the density means that $\hat{H}$ is faithful as a left or right $H^*$-module. 

The following lemma follows as in \cite[Proposition 3.3]{DVDZ99}, using that $\hat{H}$ is a two-sided ideal of $H^*$ with local units.

\begin{Lem}\label{LemExtHdual} 
Let $V$ be a unital left $\hat{H}$-module. Then there exists a unique extension to a unital left $H^*$-module.
\end{Lem}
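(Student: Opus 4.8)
The plan is to mimic the proof of \cite[Proposition 3.3]{DVDZ99} directly, using the fact that $\hat H$ is a two-sided ideal in $H^*$ possessing local units, together with the density statement recalled above (that $\hat H$ is faithful as a left and as a right $H^*$-module). So let $V$ be a unital left $\hat H$-module; I want to equip it with a left $H^*$-module structure extending the given one, and show this extension is unique and makes $V$ unital over $H^*$.

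First I would construct the action. Given $\chi\in H^*$ and $v\in V$, use unitality of $V$ over $\hat H$ to write $v=\sum_i \omega_i v_i$ with $\omega_i\in\hat H$, $v_i\in V$; better, invoke Lemma~\ref{LemLocUn}(2) to choose a single idempotent $e\in\hat H$ with $ev=v$, and set $\chi\cdot v := (\chi * e)v$, noting $\chi*e\in\hat H$ since $\hat H$ is a (right) ideal. The main point to check is that this is independent of the chosen local unit $e$: if $e,e'$ are two idempotents in $\hat H$ fixing $v$, then picking a further idempotent $f\in\hat H$ with $fe=e$, $fe'=e'$, one computes $(\chi*e)v=(\chi*e)(fv)=((\chi*e)*f)v$ and similarly for $e'$; since $(\chi*e)*f=\chi*(e*f)=\chi*(ef)$ and $ef = e$ (as $f$ fixes $e$ under the local-unit property, or more carefully $efv=ev=v$ forces the relevant equality after pairing against $\hat H$ using non-degeneracy), one reduces the two expressions to a common value. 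The cleanest route is: for any $\eta\in\hat H$ one has $\eta\cdot((\chi*e)v)=(\eta*\chi*e)v=(\eta*\chi)v$ independently of $e$ (since $\eta*\chi\in\hat H$ and $e$ fixes $v$), and because $V$ is non-degenerate as an $\hat H$-module (Lemma~\ref{LemLocUn}(2)) this pins down $(\chi*e)v$ uniquely. This simultaneously gives well-definedness and shows the action is by $H^*$-module maps in the first variable.

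Next I would verify the module axioms. Linearity in both variables is immediate. For associativity, $(\chi_1*\chi_2)\cdot v$ and $\chi_1\cdot(\chi_2\cdot v)$: choosing $e$ with $ev=v$ and then $e'$ with $e'((\chi_2*e)v)=(\chi_2*e)v$, both expressions equal $(\chi_1*\chi_2*e)v$ after the same non-degeneracy argument as above (pair with arbitrary $\eta\in\hat H$ and use that $\hat H$ is an ideal). That the unit $\varepsilon\in H^*$ acts as the identity follows since $\varepsilon*e=e$ and $ev=v$. Compatibility with the original $\hat H$-action: for $\omega\in\hat H$, $\omega\cdot v=(\omega*e)v=\omega v$ because $\omega*e\in\hat H$ and the original action already satisfies $(\omega*e)v=\omega(ev)=\omega v$. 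Unitality over $H^*$ is clear since $V=\hat H V\subseteq H^*V$.

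Finally, uniqueness: any unital $H^*$-module structure $\cdot'$ on $V$ restricting to the given $\hat H$-action must satisfy, for $\chi\in H^*$, $v\in V$ and $e\in\hat H$ idempotent with $ev=v$, the identity $\chi\cdot' v=\chi\cdot'(e v)=(\chi*e)\cdot' v=(\chi*e)v$, where the last equality uses $\chi*e\in\hat H$ and agreement on $\hat H$. Hence $\chi\cdot' v$ coincides with the action constructed above. I expect the only genuinely delicate point to be the independence-of-local-unit verification in the construction; everything else is a routine transcription of the ideal/local-unit formalism, and indeed this is exactly the content of \cite[Proposition 3.3]{DVDZ99}, so I would keep the writeup brief and refer there for the parallel argument.
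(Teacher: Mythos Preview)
Your proposal is correct and matches the paper's approach exactly: the paper simply states that the lemma ``follows as in \cite[Proposition 3.3]{DVDZ99}, using that $\hat{H}$ is a two-sided ideal of $H^*$ with local units,'' and you have spelled out precisely that argument. Your second (non-degeneracy) route to well-definedness is the clean one and is what makes the proof go through; the first attempt via $ef=e$ is indeed shakier as you note, but since you supply the correct alternative there is no gap.
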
 

The algebra $\hat{H}$ actually has the structure of an \emph{algebraic quantum group of discrete type} \cite{VD98,VDZ99a}. For example, let 
\[
\hat{\Delta}: H^* \rightarrow (H\otimes H)^*,\quad \hat{\Delta}(\omega)(h,k) = \omega(hk).
\] 
Viewing $(H\otimes H)^*$ as the convolution dual of the tensor product Hopf algebra $H\otimes H$, we then have that 
\begin{equation}\label{EqMultHopf}
\hat{H}\otimes \hat{H} = \hat{\Delta}(\hat{H})(1\otimes \hat{H}) = \hat{\Delta}(\hat{H})(\hat{H}\otimes 1)   = (1\otimes \hat{H})\hat{\Delta}(\hat{H}) = (\hat{H}\otimes 1)\hat{\Delta}(\hat{H}).
\end{equation}

\begin{Lem}\label{LemDualModComod} 
Let $(H,\Delta)$ be a Hopf algebra with invariant functionals, and let $(V,\delta)$ be a right $H$-comodule. Then $V$ is a unital left $\hat{H}$-module under 
\[
\omega v  = (\id\otimes \omega)\delta(v).
\] 
Moreover, this provides a categorical equivalence between $H$-comodules and unital left $\hat{H}$-modules. 
\end{Lem}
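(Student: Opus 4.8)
The plan is to handle the statement in three stages: verify that the formula defines a left $\hat H$-action, verify that this action is unital, and then upgrade the resulting functor to an equivalence. For the first stage I would note that $\omega v := (\id\otimes\omega)\delta(v)$ in fact defines a left $H^*$-module structure on any right $H$-comodule $V$: writing $\delta(v)=v_{(0)}\otimes v_{(1)}$, coassociativity gives $(\omega*\chi)v = v_{(0)}\,\omega(v_{(1)(1)})\,\chi(v_{(1)(2)}) = v_{(0)(0)}\,\omega(v_{(0)(1)})\,\chi(v_{(1)}) = \omega(\chi v)$, while the counit axiom gives $\varepsilon v = v$. Restricting this action to the subalgebra $\hat H\subseteq H^*$ yields the asserted $\hat H$-module structure; this step is routine.

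The first real point is unitality. Here I would use that every $H$-comodule is the directed union of its finite-dimensional subcomodules, so it suffices to show $v\in\hat H V$ when $v$ lies in a finite-dimensional subcomodule $V_0$. Let $C\subseteq H$ be the (finite-dimensional) coefficient coalgebra of $V_0$, so that $\delta(v)\in V_0\otimes C$. Since $\hat H$ separates the elements of $H$, the restriction map $\hat H\to C^*$ has point-separating image; as $C$ is finite-dimensional, a point-separating subspace of $C^*$ must be all of $C^*$, so this map is onto. Choosing $e\in\hat H$ with $e|_C=\varepsilon|_C$, we get $e v=(\id\otimes e)\delta(v)=(\id\otimes\varepsilon)\delta(v)=v$, hence $\hat H V=V$.

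Let $F$ be the functor sending $(V,\delta)$ to $V$ with this action, acting as the identity on morphisms; then functoriality and faithfulness are immediate. For fullness, let $g\colon V\to W$ be $\hat H$-linear between comodules; for each $v$ and each $\omega\in\hat H$ we have $(\id\otimes\omega)\delta_W(gv)=\omega gv=g(\omega v)=(\id\otimes\omega)(g\otimes\id)\delta_V(v)$, and since $\hat H$ separates the elements of $H$ the operators $\id\otimes\omega$ ($\omega\in\hat H$) jointly separate the elements of $W\otimes H$ (expand an element with linearly independent left tensor legs), so $\delta_W\circ g=(g\otimes\id)\circ\delta_V$.

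The hard part is essential surjectivity: producing a comodule structure on a given unital left $\hat H$-module $V$. I would first extend $V$ to a unital left $H^*$-module via Lemma~\ref{LemExtHdual}, and then invoke the structure of $\hat H$ as an algebraic quantum group of discrete type \cite{VD98,VDZ99a}: $\hat H=\bigoplus_\alpha B_\alpha$ decomposes as a direct sum of finite-dimensional blocks, matching a decomposition $H=\bigoplus_\alpha C_\alpha$ into matrix subcoalgebras $C_\alpha=B_\alpha^*$. Then $V=\bigoplus_\alpha 1_\alpha V$, each $1_\alpha V$ is a module over the finite-dimensional algebra $B_\alpha$ and hence a direct sum of copies of the simple $B_\alpha$-module, which dualizes to a $C_\alpha$-comodule; assembling these gives $\delta_V\colon V\to V\otimes H$ with $(\id\otimes\omega)\delta_V(v)=\omega v$, so that $V\mapsto(V,\delta_V)$ is a quasi-inverse to $F$, with naturality routine. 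Equivalently, one may observe that unital $\hat H$-modules are exactly the rational $H^*$-modules — each cyclic submodule $H^*v=\hat H e v$ is finite-dimensional, since $\hat H e$ is, for a suitable idempotent $e\in\hat H$ with $ev=v$ — and appeal to the classical equivalence between $H$-comodules and rational $H^*$-modules. Either way, the one genuinely non-formal ingredient is the discrete-type (Peter–Weyl) decomposition of $\hat H$; everything else uses only point separation and the formal properties of the dense ideal $\hat H\subseteq H^*$.
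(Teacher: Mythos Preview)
Your treatment of the module structure, unitality, and full faithfulness is correct and matches the paper's brief treatment (the paper phrases unitality identically: since $\hat H$ separates points of $H$, one can pick $\omega\in\hat H$ agreeing with $\varepsilon$ on any finite set of elements).

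For essential surjectivity, however, your approach (a) has a genuine gap. You claim that the discrete-type structure of $\hat H$ gives $\hat H=\bigoplus_\alpha B_\alpha$ with each $B_\alpha$ a matrix algebra and $H=\bigoplus_\alpha C_\alpha$ a direct sum of \emph{matrix} subcoalgebras, and then that every $B_\alpha$-module is a direct sum of copies of ``the simple $B_\alpha$-module''. This is precisely cosemisimplicity of $H$, equivalently semisimplicity of $\hat H$, which is \emph{not} assumed: the paper explicitly allows $\varphi(1)=0$ (see the remark after Theorem~\ref{TheoEqMor}). For instance, for Sweedler's $H_4$ one has $\hat H\cong H_4\cong k[\epsilon]/(\epsilon^2)\oplus k[\epsilon]/(\epsilon^2)$, whose blocks are not matrix algebras and whose modules are not semisimple. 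What survives from \cite{VDZ99a} is a decomposition into finite-dimensional two-sided ideals $B_\alpha$; your argument can then be repaired by invoking instead the elementary fact that $B$-modules and $B^*$-comodules coincide for any finite-dimensional $B$. Your approach (b) via rational $H^*$-modules is also salvageable, but note that local finiteness of cyclic submodules does not by itself imply rationality over an arbitrary $H^*$; you still need to check that the action of $f\in H^*$ on $v=ev$ factors as $f\mapsto f*e\mapsto (f*e)v$ through the finite-dimensional space $\hat He$, whose preimage under the Fourier transform lies in $H$.

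The paper bypasses all of this. It constructs $\delta$ directly: extend the unital $\hat H\otimes\hat H$-module $V\otimes\hat H$ to an $(H\otimes H)^*$-module, use the factorisation $\hat H\otimes\hat H=\hat\Delta(\hat H)(\hat H\otimes 1)$ to produce $\tilde\delta:V\to V\otimes\hat H$ with $\tilde\delta(v)(1\otimes\chi)=\hat\Delta(\varphi)(v\otimes\chi)$, and then transport along the Fourier bijection $\mathscr F:H\to\hat H$, $h\mapsto\varphi(h\,\cdot\,)$, to get $\delta=(\id\otimes\mathscr F^{-1})\tilde\delta$. This yields $(\id\otimes\omega)\delta(v)=\omega v$ immediately, with no block decomposition and no appeal to rationality theory. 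Your route (once fixed) is more structural and leans on the discrete-type classification; the paper's is a one-line analytic-flavoured construction using only the multiplier Hopf identities \eqref{EqMultHopf} and the Fourier transform.
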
 

\begin{proof}
It is immediate that $V$ becomes a left $\hat{H}$-module. It is unital since $\hat{H}$ separates the elements of $H$, hence for any finite collection $h_1,\ldots,h_n$ there exists $\omega \in\hat{H}$ with $\omega(h_i) = \varepsilon(h_i)$ for all $i$. 

To see that any unital $\hat{H}$-module appears in this way, we follow the argument in \cite[Proposition 3.2]{VDZ99b}. Namely, note that $V\otimes \hat{H}$ is a unital $\hat{H}\otimes \hat{H}$-module, and can hence be extended to a $(H\otimes H)^*$-module (applying Lemma \ref{LemExtHdual} to the tensor product Hopf algebra $H\otimes H$). Since $V = \hat{H}V$ and $\hat{H}\otimes \hat{H} = \hat{\Delta}(\hat{H})(\hat{H}\otimes 1)$, there exists a unique linear map 
\[
\tilde{\delta}: V\rightarrow V\otimes \hat{H},\quad \widetilde{\delta}(v)(1\otimes \chi) = \hat{\Delta}(\varphi)(v\otimes \chi),\quad v\in V,\chi \in \hat{H}.
\] 
Let 
\[
\mathscr{F}: H \rightarrow \hat{H},\quad h \mapsto \varphi(h-).
\] 
Then $\mathscr{F}$ is a linear bijection with $\varphi(\mathscr{F}^{-1}(\omega) h) = \omega(h)$ for all $h\in H,\omega\in\hat{H}$. It follows that the map 
\[
\delta: V\rightarrow V\otimes H,\quad \delta(v) = (\id\otimes \mathscr{F}^{-1})\tilde{\delta}(v)
\] 
satisfies 
\[
(\id\otimes \omega)\delta(v) = \omega v,\qquad \omega \in\hat{H},v\in V.
\] 
As $V$ is a left $\hat{H}$-module and $\hat{H}$ separates elements in $H$, we must have that $(V,\delta)$ is a comodule. Then also the counit identity must be satisified since $\omega v = (\id\otimes \omega)\delta(v)$ is an extension to $H^*$ of the given $\hat{H}$-module, and must hence coincide with the unique, unital extension to $H^*$ of the $\hat{H}$-module structure on $V$.
\end{proof} 

A similar correspondence can be established for comodule algebras, using the theory of smash products for discrete algebraic quantum groups \cite{DVDZ99}. 

\begin{Def} 
Let $(H,\Delta)$ be a Hopf algebra with invariant functionals, and let $(A,\alpha)$ be an $H$-comodule algebra. The \emph{smash product} algebra $A\#\hat{H}$ is defined as the vector space $A\otimes \hat{H}$ endowed with the product 
\[
(a\otimes \omega)(b\otimes \chi) = ab_{(0)}\otimes \omega(b_{(1)}-)*\chi,\qquad a,b\in A,\omega,\chi \in \hat{H}.
\]
\end{Def}

From the definition of $\hat{H}$ it is clear that the above product is well-defined, and it is also easily checked that we get an associative product this way. When working with the smash product, we will use the notation $a\#\omega$ for elementary tensors. We will also use the notation
\[
\omega \#a := a_{(0)}\# \omega(a_{(1)}-), \qquad \omega \in \hat{H},a\in A.
\]
It is not hard to see that  $\hat{H}\#A  = A\#\hat{H}$, and that we get a linear isomorphism 
\[
\hat{H}\otimes A\rightarrow \hat{H}\# A,\quad \omega\otimes a \mapsto \omega\#a.
\]
The multiplication on $\hat{H}\#A$ is then determined by
\[
(\omega\#a)(\chi\#b) = (\omega*\chi)\left(S^{-1}(a_{(1)})-\right) \# a_{(0)}b.
\]
We will in the following use the notation $A\#\hat{H}$ and $\hat{H}\#A$ interchangeably. 

To be able to set up properly the correspondence between $H$-equivariant $A$-modules and modules over the smash product, we will assume the existence of a particular kind of local units. 

\begin{Def}\label{DefCoinv}
Let $(H,\Delta)$ be a Hopf algebra, and let $(A,\alpha)$ be an $H$-comodule algebra.  The \emph{coinvariant algebra} is defined as 
\[
A^{\alpha}= \{a\in A\mid \alpha(a) = a\otimes 1\}.
\]
We say that $A$ has \emph{$H$-coinvariant local units} if for each finite subset $\{a_i|i = 1,...,n\}\subset A$ there exists $e\in A^{\alpha}$ with $ea_i = a_i = a_ie$, for $i = 1,\ldots,n$. 
\end{Def} 

For example, this condition is satisfied when $A$ is a unital algebra.

\begin{Lem}\label{LemLocUnSmash}  
Let $(H,\Delta)$ be a Hopf algebra with invariant functionals, and let $(A,\alpha)$ be an $H$-comodule algebra with $H$-coinvariant local units. Then $A\#\hat{H}$ has local units.
\end{Lem}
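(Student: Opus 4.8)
The plan is to produce explicit local units for $A\#\hat{H}$ out of the local units available on both factors. Given a finite subset $\{a_i\#\omega_i\}$ of $A\#\hat{H}$, I would first use the hypothesis that $A$ has $H$-coinvariant local units to find an idempotent $e\in A^{\alpha}$ with $ea_i = a_i = a_ie$ for all $i$; note that since $e$ is coinvariant, $\alpha(e) = e\otimes 1$. Second, since $\hat{H}$ has local units (recalled in the excerpt, from \cite[Proposition 3.1]{VDZ99a}), I would find an idempotent $f\in\hat{H}$ with $f*\omega_i = \omega_i = \omega_i*f$ for all $i$. The natural candidate for a local unit is then $u := e\#f \in A\#\hat{H}$.

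The key computations are to check that $u$ is idempotent and that $u(a_i\#\omega_i) = a_i\#\omega_i = (a_i\#\omega_i)u$. For idempotency, using the product formula $(a\#\omega)(b\#\chi) = ab_{(0)}\#\omega(b_{(1)}-)*\chi$ with $b = e$, the coinvariance $\alpha(e) = e\otimes 1$ gives $e_{(0)}\otimes e_{(1)} = e\otimes 1$, so
\[
(e\#f)(e\#f) = e e \# f(1-)*f = e\#\varepsilon(1)f*f = e\#f,
\]
using $e^2 = e$ and $f*f = f$ (here $f(1-) = f$ as a functional, strictly $\omega(1-) = \omega$). For the left action, $(e\#f)(a_i\#\omega_i) = e a_{i(0)}\#f(a_{i(1)}-)*\omega_i$; the obstacle is that $a_i$ need not be coinvariant, so $f(a_{i(1)}-)$ is not simply a multiple of $f$. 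Here I would use that $ea_i = a_i$ together with equivariance of the coaction under the $A^\alpha$-action: since $e\in A^\alpha$, we have $\alpha(ea_i) = (e\otimes 1)\alpha(a_i)$, hence $(ea_i)_{(0)}\otimes (ea_i)_{(1)} = ea_{i(0)}\otimes a_{i(1)}$, and as $ea_i = a_i$ this is $a_{i(0)}\otimes a_{i(1)} = \alpha(a_i)$. Thus $(e\#f)(a_i\#\omega_i) = a_{i(0)}\# f(a_{i(1)}-)*\omega_i$. Now I recognize $a_{i(0)}\#f(a_{i(1)}-) = f\#a_i$ in the alternative notation $\omega\#a := a_{(0)}\#\omega(a_{(1)}-)$ introduced in the excerpt, so $(e\#f)(a_i\#\omega_i) = (f\#a_i)(\omega_i) $— more carefully, this equals $(f*(\text{something}))$... so I should instead argue via the $\hat H\#A$ picture: write $a_i\#\omega_i$ in the form $\sum_j \chi_j\#b_j$ and use the multiplication rule $(\omega\#a)(\chi\#b) = (\omega*\chi)(S^{-1}(a_{(1)})-)\#a_{(0)}b$, choosing $f$ to also absorb the finitely many functionals $\chi_j$ appearing; then $f*\chi_j = \chi_j$ gives the left identity, and the coinvariance of $e$ handles the $A$-part on the right.

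The cleanest route is probably to enlarge the finite set: given $\{a_i\#\omega_i\}$, rewrite each as $\sum_j \chi_{ij}\#b_{ij}$ (finitely many terms, using the linear isomorphism $\hat H\otimes A\cong\hat H\#A$), collect all the $b_{ij}$ into one finite set and all the $\chi_{ij}$ into another, pick a coinvariant idempotent $e$ absorbing the $b_{ij}$ from both sides and an idempotent $f\in\hat H$ absorbing the $\chi_{ij}$ from both sides, and set $u = e\#f$ (equivalently $f\#e$, which coincide since $e$ is coinvariant). Then the two multiplication formulas plus $e^2=e$, $f*f=f$, $\alpha(e)=e\otimes1$ immediately give $u^2 = u$ and $u(\chi_{ij}\#b_{ij}) = \chi_{ij}\#b_{ij} = (\chi_{ij}\#b_{ij})u$, hence $u$ is a local unit for the original finite set. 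I expect the main obstacle to be purely bookkeeping: keeping track of the two notational conventions for the smash product and making sure the coinvariance of $e$ is invoked at exactly the point where the $H$-leg of $\alpha(b_{ij})$ would otherwise interfere; once that is organized, the verification is a short Sweedler-notation calculation.
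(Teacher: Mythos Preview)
Your approach is exactly the paper's: build $u=e\#f=f\#e$ from a coinvariant idempotent $e\in A^{\alpha}$ and an idempotent $f\in\hat H$. There is, however, a genuine bookkeeping gap in your ``cleanest route''. You only require $e$ to absorb the $b_{ij}$ and $f$ to absorb the $\chi_{ij}$ coming from the $\hat H\#A$ presentation. This suffices for the left side, since with $e$ coinvariant the $\hat H\#A$ formula gives
\[
(f\#e)(\chi_{ij}\#b_{ij})=(f*\chi_{ij})\bigl(S^{-1}(e_{(1)})-\bigr)\#e_{(0)}b_{ij}=\chi_{ij}\#b_{ij}.
\]
But the right side does \emph{not} reduce the same way: with the same formula,
\[
(\chi_{ij}\#b_{ij})(f\#e)=(\chi_{ij}*f)\bigl(S^{-1}((b_{ij})_{(1)})-\bigr)\#(b_{ij})_{(0)}e,
\]
and after using $\chi_{ij}*f=\chi_{ij}$ and $b_{ij}e=b_{ij}$ you are left with $\chi_{ij}\bigl(S^{-1}((b_{ij})_{(1)})-\bigr)\#(b_{ij})_{(0)}$, which converts back to $b_{ij}\#\chi_{ij}$ in the $A\#\hat H$ picture, not to $\chi_{ij}\#b_{ij}=(b_{ij})_{(0)}\#\chi_{ij}((b_{ij})_{(1)}-)$. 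In other words, the antipode twist does not cancel because $b_{ij}$ is not coinvariant, so the claimed identity $(\chi_{ij}\#b_{ij})u=\chi_{ij}\#b_{ij}$ is false in general.

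The fix is the one step you dropped from your own earlier discussion (and exactly what the paper does): enlarge the finite sets so that $e$ also absorbs the original $a_i$ and $f$ also absorbs the original $\omega_i$. Then check the right side in the $A\#\hat H$ picture instead:
\[
(a_i\#\omega_i)(e\#f)=a_ie_{(0)}\#\omega_i(e_{(1)}-)*f=a_ie\#\omega_i*f=a_i\#\omega_i,
\]
again using only the coinvariance of $e$. The paper phrases this as choosing finite-dimensional $W\subseteq A$, $Y\subseteq\hat H$ with $X\#V\subseteq W\#Y$ and then taking $e$ to absorb $V+W$ and $f$ to absorb $X+Y$; with that amendment your argument and the paper's coincide.
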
 
\begin{proof} 
 If $V \subseteq A$ and $X\subseteq \hat{H}$ are finite-dimensional subspaces, we can choose finite-dimensional subspaces  $W \subseteq A$ and $Y\subseteq \hat{H}$ such that $X\# V \subseteq W\#Y$. Choose $e\in A^{\alpha}$ an idempotent with $ea = a = ae$ for all $a\in V+W$, and choose $\chi \in \hat{H}$ an idempotent with $\chi* \omega = \omega  = \omega*\chi$ for all $\omega \in X+Y$. Then by $H$-coinvariance of $e$ it is clear that $e\#\chi = \chi\#e$ is a local unit with respect to $X\#V$.  
\end{proof} 

\begin{Rem}
The above lemma is implied in a remark following \cite[Lemma 5.2]{VDZ99b}, without the assumption that the local units in $A$ are $H$-coinvariant. However, in this case we fail to see how one can build \emph{idempotent} local units. 
\end{Rem}

We now have the following proposition by combining Lemma \ref{LemDualModComod} with \cite[Proposition 5.11]{DVDZ99}, together with the observation that a unital $A\#\hat{H}$-module is automatically $\hat{H}$-unital since, using notation as in Lemma \ref{LemLocUnSmash}, $A\#\hat{H} = \hat{H}\#A$ and $\hat{H}$ is idempotent.

\begin{Prop}\label{PropCatEqDi} 
Let $(H,\Delta)$ be a Hopf algebra with invariant functionals, and let $(A,\alpha)$ be an $H$-comodule algebra with $H$-coinvariant local units. If $V$ is a unital $H$-equivariant $A$-module, then $V$ becomes a unital $A\#\hat{H}$-module by 
\[
(a\#\omega)v = a(\omega v),
\] 
setting up a categorical equivalence between unital $H$-equivariant $A$-modules and unital $A\#\hat{H}$-modules. 
\end{Prop}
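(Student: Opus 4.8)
The plan is to reduce the statement to the corresponding equivalence for modules over a smash product in the sense of \cite{DVDZ99}, via the comodule/module dictionary of Lemma \ref{LemDualModComod}. Under that dictionary, a right $H$-comodule structure $\delta$ on a space $V$ is the same datum as a unital left $\hat{H}$-module structure $\omega v = (\id\otimes\omega)\delta(v)$, and pairing the second leg of the equivariance identity $\delta(av) = \alpha(a)\delta(v)$ with a functional $\omega\in\hat{H}$ turns it into the compatibility relation between the $A$-action and the $\hat{H}$-action that defines an $A\#\hat{H}$-module in loc.\ cit.; so the real work is to make this translation precise in both directions and to check that the resulting smash-product action is exactly $(a\#\omega)v = a(\omega v)$.

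For the forward functor, given a unital $H$-equivariant $A$-module $(V,\delta)$ I would equip $V$ with the $\hat{H}$-action of Lemma \ref{LemDualModComod} and set $(a\#\omega)v := a(\omega v)$. Checking that this respects the product of $A\#\hat{H}$ is a direct computation: expanding $(a\#\omega)(b\#\chi) = ab_{(0)}\#\omega(b_{(1)}-)*\chi$, using $\delta(bw)=\alpha(b)\delta(w)$ with $w = \chi v$, coassociativity, and associativity of the $A$-action, both sides reduce to the same expression, which I will not spell out. Unitality is immediate: $V = AV$ by hypothesis and $V = \hat{H}V$ by Lemma \ref{LemDualModComod}, so $V = A\hat{H}V \subseteq (A\#\hat{H})V \subseteq V$. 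Any morphism of $H$-equivariant $A$-modules is $A$-linear and $H$-colinear, hence $\hat{H}$-linear, hence $A\#\hat{H}$-linear, so this is functorial.

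For the inverse functor, start from a unital $A\#\hat{H}$-module $W$. By Lemma \ref{LemLocUnSmash} the algebra $A\#\hat{H}$ has local units, so $W$ is non-degenerate and, arguing as in Lemma \ref{LemExtHdual}, extends uniquely to a module over the multiplier algebra $M(A\#\hat{H})$; both $A$ and $\hat{H}$ sit inside $M(A\#\hat{H})$, and restricting along these inclusions gives $W$ an $A$-module structure and an $\hat{H}$-module structure, still with $(a\#\omega)w = a(\omega w)$. The $\hat{H}$-module is unital: writing elements of $A\#\hat{H} = \hat{H}\#A$ in the form $\omega\#a$ and using that $\hat{H}$ is idempotent one has $A\#\hat{H} = \hat{H}\cdot(A\#\hat{H})$ inside $M(A\#\hat{H})$, hence $W = \hat{H}W$ --- this is the observation flagged just before the statement. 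Applying Lemma \ref{LemDualModComod} to this unital $\hat{H}$-module yields a comodule map $\delta$ on $W$, and re-reading the relations between $A$ and $\hat{H}$ inside $M(A\#\hat{H})$ gives $\delta(aw) = \alpha(a)\delta(w)$, so $(W,\delta)$ is an $H$-equivariant $A$-module. Its $A$-action is unital because, by Lemma \ref{LemLocUnSmash}, any finitely many vectors of $W$ are fixed by an idempotent of the form $e\#\chi$ with $e \in A^{\alpha}$ an honest element of $A$, whence $w = (e\#\chi)w = e(\chi w) \in AW$. That the two functors are mutually inverse and natural is then a formal check from the characterizing properties of $\delta$ and of $(a\#\omega)v$.

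The step I expect to be the main obstacle is this last unitality point: checking that restricting a unital $A\#\hat{H}$-module to an $A$-module again produces a \emph{unital} module. This is the one place where the hypothesis of $H$-\emph{coinvariant} local units is genuinely needed rather than plain local units, since only then does Lemma \ref{LemLocUnSmash} provide idempotent local units for $A\#\hat{H}$ lying in $A^{\alpha}\#\hat{H}$ (compare the Remark after that lemma). Apart from this, the only real care is to match the equivariance condition with the smash-product module axiom exactly as in \cite[Proposition 5.11]{DVDZ99}, so that this result can be quoted verbatim for the equivalence of categories.
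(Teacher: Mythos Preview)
Your proposal is correct and follows essentially the same route as the paper: the paper's proof is a one-line pointer that combines Lemma~\ref{LemDualModComod} with \cite[Proposition~5.11]{DVDZ99} together with the observation (which you also make) that a unital $A\#\hat{H}$-module is automatically $\hat{H}$-unital because $A\#\hat{H}=\hat{H}\#A$ and $\hat{H}$ is idempotent. Your write-up is a faithful elaboration of this, and you additionally spell out the $A$-unitality of the inverse functor via the coinvariant local units of Lemma~\ref{LemLocUnSmash}, a point the paper leaves implicit.
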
 

Combining Proposition \ref{PropCatEqDi} with Theorem \ref{TheoSemisimp}, we obtain the following theorem.

\begin{Theorem}\label{TheoEqMor}  
Let $(H,\Delta)$ be a Hopf algebra with invariant functionals, and let $(A,\alpha)$ be an $H$-comodule algebra with $H$-coinvariant local units. Then the following are equivalent:
\begin{itemize}
\item $(A,\alpha)$ is $H$-equivariantly absolutely semisimple,
\item $A\#\hat{H}$ is absolutely semisimple as a $k$-algebra,
\item $A\#\hat{H}$ is Morita equivalent to $k_{I}$ for some index set $I$.
\end{itemize}
\end{Theorem}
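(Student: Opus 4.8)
The plan is to deduce everything from Theorem \ref{TheoSemisimp} by transporting the module categories along the equivalence of Proposition \ref{PropCatEqDi}.

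First, by Lemma \ref{LemLocUnSmash} the algebra $A\#\hat{H}$ has local units, so Theorem \ref{TheoSemisimp} applies to it directly and yields the equivalence of the second and third bullet points. It therefore remains only to prove that the first and second are equivalent. For this, write $F$ for the categorical equivalence of Proposition \ref{PropCatEqDi} between the category of unital $H$-equivariant $A$-modules and the category of unital $A\#\hat{H}$-modules; recall that $F$ is the identity on underlying $k$-vector spaces, with $A\#\hat{H}$ acting by $(a\#\omega)v = a(\omega v)$. Being an equivalence of abelian categories, $F$ is additive, preserves arbitrary direct sums, and identifies the poset of equivariant submodules of $(V,\delta)$ with the poset of $A\#\hat{H}$-submodules of $FV$; moreover $AV\neq\{0\}$ if and only if $(A\#\hat{H})FV\neq\{0\}$, since $A\#\hat{H} = \hat{H}\#A$ with $\hat{H}$ idempotent. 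Hence $(V,\delta)$ is equivariantly simple if and only if $FV$ is simple, and correspondingly for semisimplicity.

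To upgrade this to \emph{absolute} (semi)simplicity, I would check compatibility with an arbitrary field extension $k\subseteq K$. The Hopf algebra $H_K$ again has invariant functionals, namely $\varphi_K = \id\otimes\varphi$, and the associated space $\widehat{H_K}$ is canonically identified with $K\otimes_k\hat{H}$ as a $k$-algebra; since scalar extension is flat one has $(A_K)^{\alpha_K} = (A^\alpha)_K$, so $A_K$ has $H_K$-coinvariant local units, and there is a natural algebra isomorphism $(A\#\hat{H})_K \cong A_K\#\widehat{H_K}$. Thus Proposition \ref{PropCatEqDi} applies over $K$ and furnishes an equivalence $F_K$; because both $F$ and $F_K$ act as the identity on underlying spaces with module structures given by the same formula, one gets $F_K(V_K)\cong (FV)_K$ as $(A\#\hat{H})_K$-modules. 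Consequently $V_K$ is simple as an $H_K$-equivariant $A_K$-module if and only if $(FV)_K$ is simple as an $(A\#\hat{H})_K$-module, for every $K$; that is, $(V,\delta)$ is equivariantly absolutely simple if and only if $FV$ is absolutely simple. Combining this with the preservation of direct sums shows that every unital $H$-equivariant $A$-module is a direct sum of equivariantly absolutely simple ones exactly when every unital $A\#\hat{H}$-module is a direct sum of absolutely simple ones, which is the desired equivalence of the first two bullet points.

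The genuinely routine but slightly delicate part is the last paragraph: identifying $\widehat{H_K}$ with $K\otimes_k\hat{H}$, checking that the smash product commutes with scalar extension, and confirming that Proposition \ref{PropCatEqDi} is natural with respect to the base field. I expect no real difficulty here, as all the maps in sight are the obvious canonical ones, but it is the place where care is needed, precisely because the definition of absolute (semi)simplicity quantifies over all field extensions and so cannot be checked over $k$ alone.
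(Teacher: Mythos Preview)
Your proposal is correct and follows the same route as the paper, which simply states that the theorem is obtained by combining Proposition \ref{PropCatEqDi} with Theorem \ref{TheoSemisimp}. You have unpacked this: using Lemma \ref{LemLocUnSmash} so that Theorem \ref{TheoSemisimp} applies to $A\#\hat{H}$ for the equivalence of the last two items, and spelling out why the categorical equivalence of Proposition \ref{PropCatEqDi} transports (absolute) simplicity and semisimplicity; in particular, your verification that the construction is compatible with arbitrary base change $k\subseteq K$ is exactly the detail the paper leaves implicit, and your outline of it is sound.
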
 

\begin{Rem}
It is well-known that if $\hat{H}$ is absolutely semisimple, necessarily $H$ must have an invariant integral, corresponding to the unique normalized element in the one-dimensional kernel of the counit of $\hat{H}$. However, this condition is not a necessary requirement for the existence of $H$-equivariant absolutely semisimple comodule algebras, as for example for the comodule algebra $H$ itself the associated smash product $H \#\hat{H} \cong \End_{\hat{H}}(H)$ is absolutely semisimple by \cite[Proposition 6.7]{DVDZ99}, where we use the notation from Definition \ref{DefFinRank}. 
\end{Rem}

Also Morita theory can be developed equivariantly. 

\begin{Def} 
Let $(H,\Delta)$ be a Hopf algebra. An \emph{$H$-equivariant, or $H$-relative Morita context} consists of a Morita context $\{A_{ij}\}$ where all $A_{ij}$ are $H$-comodules $(A_{ij},\alpha_{ij})$ and all multiplication maps $A_{ij}\otimes A_{jk}\rightarrow A_{ik}$ are $H$-comodule morphisms. It is called surjective, respectively strict if the underlying Morita context is surjective, respectively strict.
\end{Def} 

As before, we then have that $(A,\alpha) = (A_{11},\alpha_{11})$ and $(B,\beta) = (A_{22},\alpha_{22})$ are $H$-comodule algebras, and we call $((M,\delta_M),(N,\delta_N)) = ((A_{12},\alpha_{12}),(A_{21},\alpha_{21}))$ an $H$-equivariant Morita context between $(A,\alpha)$ and $(B,\beta)$. 

\begin{Def} 
We call a couple $(A,\alpha),(B,\beta)$ of $H$-comodule algebras \emph{$H$-equivariantly Morita equivalent} if there exists a strict $H$-equivariant Morita context between $(A,\alpha)$ and $(B,\beta)$. 
\end{Def}

It is again straightforward to show that $H$-equivariant Morita equivalence is indeed an equivalence relation, and that then the categories of equivariant firm modules are equivalent. This equivalence will however be \emph{stronger} in general than just Morita equivalence of the associated smash products, the point being that their categories of firm modules will be equivalent as $\mathrm{Comod}(H)$-module categories.

An example of an equivariant Morita equivalence is the following. Let $(A,\alpha)$ be an $H$-comodule algebra. Note that $A\#\hat{H}$ is naturally a (non-unital) left $H$-module algebra by 
\[
h \cdot (a\#\omega) = a\#\omega(-h). 
\]
One can hence form the iterated smash product $(A\#\hat{H})\#H$ where
\[
(x\#h)(y\#k) = x(h_{(1)}\cdot y)\# h_{(2)}k,\qquad x,y \in A\#H,h,k\in H.
\]  
This becomes an $H$-comodule algebra by 
\[
\widehat{\widehat{\alpha}}: (a\#\omega)\#h \mapsto (a\#\omega)\#h_{(1)}\otimes h_{(2)},
\]
with algebra of coinvariants 
\[
A\#\hat{H} \subseteq (A \#\hat{H})\#H.
\]
In particular, $(A\#\hat{H})\#H$ has $\widehat{\widehat{\alpha}}$-coinvariant local units if $A$ has $\alpha$-coinvariant units. Let us denote by $\widehat{\widehat{\alpha}}_{S^2}$ the $S^2$-twisted coaction
\[
\widehat{\widehat{\alpha}}_{S^2}: (a\#\omega)\#h \mapsto (a\#\omega)\#h_{(1)}\otimes S^2(h_{(2)}).
\]

\begin{Prop}\label{PropEqMorDual}
Let $(A,\alpha)$ be an $H$-comodule algebra with $H$-coinvariant local units. Then $(A,\alpha)$ and $((A\#\hat{H})\#H,\widehat{\widehat{\alpha}}_{S^2})$ are equivariantly Morita equivalent. 
\end{Prop}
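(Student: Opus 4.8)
We aim to produce an explicit strict $H$-equivariant Morita context between $(A,\alpha)$ and $B:=(A\#\hat{H})\#H$ equipped with the twisted coaction $\widehat{\widehat{\alpha}}_{S^2}$. Both algebras have local units: $A$ because it has $H$-coinvariant local units, and $B$ because, as noted just above, having those local units in $A$ forces $\widehat{\widehat{\alpha}}$-coinvariant local units in $B$. Hence by Theorem~\ref{TheoAM} it will be enough to exhibit a \emph{surjective} equivariant Morita context; strictness then follows automatically (and with it the fact that equivariant Morita equivalence is symmetric here).

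For the underlying bimodules I would take $M$ and $N$ to both be $A\#\hat{H}$ as $k$-vector spaces. The $A$-actions are the evident ones: on $M$, $a\cdot(b\#\omega)=ab\#\omega$ makes it a left $A$-module, while on $N$, $(b\#\omega)\cdot a=ba_{(0)}\#\omega(a_{(1)}-)$ (i.e.\ right multiplication by the image of $a$ in $M(A\#\hat{H})$) makes it a right $A$-module. The right $B$-action on $M$, resp.\ left $B$-action on $N$, is assembled from right, resp.\ left, multiplication by the subalgebra $A\#\hat{H}\subseteq B$ together with the left $H$-module structure $h\cdot(a\#\omega)=a\#\omega(-h)$ used to build $B$, the latter converted into an action of the outer $H$-leg of $B$ by means of the (invertible) antipode $S$. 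Finally, the two pairings are modelled on the multiplication of $A\#\hat{H}$: the map $N\otimes_A M\to B$ retains the $\hat{H}$-data and manufactures the $H$-leg of $B$ via the linear bijection $\mathscr{F}\colon H\to\hat{H}$, $h\mapsto\varphi(h-)$, from the proof of Lemma~\ref{LemDualModComod}, whereas the map $M\otimes_B N\to A$ instead contracts the two $\hat{H}$-components against one another using the invariant functional $\varphi$, so that all $\hat{H}$- and $H$-data cancels and an element of $A$ is left.

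The $H$-comodule structures on $M$ and $N$ are the ones induced by $\alpha$ on the $A$-leg, suitably combined with powers of $S$ applied to the $\hat{H}$-leg. The heart of the argument, and the step I expect to be the main obstacle, is to calibrate these coactions so that both pairings become $H$-comodule maps \emph{simultaneously}: into $(A,\alpha)$ on one side and into $(B,\widehat{\widehat{\alpha}}_{S^2})$ on the other. It is precisely this double compatibility that forces the $S^2$-twist on $B$: pushing an $\hat{H}$-component through the coaction and back through $\mathscr{F}$ produces the square of the antipode, exactly as in the Takesaki--Takai duality for non-unimodular (quantum) groups. Making this bookkeeping balance uses the properties of $\varphi$ collected in Proposition~\ref{PropHopfInv}, notably invertibility of $S$, uniqueness of $\varphi$ up to a scalar, and the modular element $\delta$ with $\varphi(S(h))=\varphi(h\delta)$.

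Once the formulas are pinned down, the remaining module, bimodule and comodule axioms are a routine check. Surjectivity is likewise direct: that $N\cdot M=B$ and $M\cdot N=A$ comes down, after absorbing local units of $A$ and of $\hat{H}$, to surjectivity of $\mathscr{F}$ and faithfulness of $\varphi$ on $H$ (both from Proposition~\ref{PropHopfInv}), while $A\cdot M=M$, $M\cdot B=M$ and the analogous identities for $N$ follow from Lemma~\ref{LemLocUnSmash} and the $H$-coinvariant local units of $A$. Theorem~\ref{TheoAM} then promotes the context to a strict one, yielding the asserted $H$-equivariant Morita equivalence between $(A,\alpha)$ and $((A\#\hat{H})\#H,\widehat{\widehat{\alpha}}_{S^2})$.
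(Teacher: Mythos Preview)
Your overall strategy---build an explicit surjective Morita context on the bimodules $M=A\#\hat{H}$ and $N=\hat{H}\#A$, then invoke Theorem~\ref{TheoAM} to get strictness---is exactly what the paper does, and the non-equivariant part of your sketch is fine. The divergence is in how you propose to make the context $H$-equivariant. You suggest putting coactions on $M$ and $N$ ``induced by $\alpha$ on the $A$-leg, suitably combined with powers of $S$ applied to the $\hat{H}$-leg,'' and you attribute the appearance of $S^2$ to pushing data through $\mathscr{F}$. The paper does something rather different: it realizes the whole context $Q=\begin{pmatrix}D&M\\ N&A\end{pmatrix}$ inside $\End_k(V)$ for $V=M\oplus A$ via explicit operators, and then, instead of writing down $H$-coactions directly, it defines a unital left $\hat{H}$-module algebra structure on $Q$ by the \emph{adjoint} action $\omega\cdot x=\Theta(\omega_{(1)})x\Theta(\hat{S}(\omega_{(2)}))$, where $\Theta(\omega)$ acts as $a\#\chi\mapsto a\#\chi*\hat{S}(\omega)$ on $M$ and as $b\mapsto\omega\cdot b$ on $A$. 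Note in particular that on $M$ the $A$-leg is \emph{untouched}; the equivariance does not come from $\alpha$ there. The $S^2$-twist then falls out of the concrete commutation relation $\Theta(\omega)\lambda_{\#}^H(h)=\lambda_{\#}^H(h_{(1)})\Theta(\omega(S^2(h_{(2)})-))$, and the reduction from ``$\hat{H}$-module algebra'' back to ``$H$-comodule algebra'' is handled by citing \cite[Theorem 3.3]{VDZ99b}.

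The gap in your proposal is that your description of the coactions is too vague to be checked, and where it is specific it does not match a construction that is known to work. Two concrete points: first, the pairing $M\otimes_B N\to A$ in the paper is built from $\Lambda^{\hat{H}}(\omega)(\chi\#b)=\hat{\varphi}(\omega*\chi)b$ using the left invariant functional $\hat{\varphi}$ of the \emph{dual} $(\hat{H},\hat{\Delta})$, not $\varphi$ on $H$; second, the adjoint action on the $N$-component picks up the modular automorphism $\hat{\sigma}$ of $\hat{H}$ (the formula is $\omega\cdot\Lambda^{\hat{H}}(\chi)=\Lambda^{\hat{H}}(\hat{\sigma}^{-1}\hat{S}^2(\omega)*\chi)$), a feature your ``powers of $S$'' heuristic does not capture. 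Since you yourself flag the calibration of the two coactions as ``the main obstacle,'' you should either supply the explicit formulas and verify both pairings are comodule maps, or---closer to the paper---pass to a $\hat{H}$-module algebra structure on the linking algebra and check it restricts correctly on $A$ and on $D$.
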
 
\begin{proof}
This is well-known in the operator algebraic setting as the \emph{biduality theorem} \cite{ES80}, where however one can avoid twisting with $S^2$. The latter is also possible in the algebraic setting whenever $\hat{S}^2$ is inner on $\hat{H}$. In general, however, it is not clear how to avoid the additional twisting. 

Let us sketch a proof of the above proposition using techniques from \cite[Section 4]{VDZ99b}. We refrain from spelling out all the computational details.

Consider for $A$ and $M = A\#\hat{H}$ as $k$-linear spaces the following operators:
\[
\lambda^A: A \times A \rightarrow A,\quad \lambda_A(a)b = ab,\qquad
\lambda^{\hat{H}}: \hat{H} \times A \rightarrow M,\quad \lambda_{\hat{H}}(\omega)a =  \omega \#a,
\]
\[
\lambda_{\#}^{A}: A \times M \rightarrow M,\quad \lambda_{\#}^A(a)b\#\omega = ab\#\omega,\qquad 
\lambda_{\#}^{\hat{H}}: \hat{H}\times M \rightarrow M,\quad \lambda_{\#}^{\hat{H}}(\omega)\chi \# a =  \omega*\chi \#a,
\]
\[\lambda_{\#}^{H}: H\times M \rightarrow M,\quad \lambda_{\#}^{H}(h)a\#\omega = a \# \omega(-h),  \qquad
\Lambda^{\hat{H}}: \hat{H}\times M \rightarrow A,\quad \Lambda^{\hat{H}}(\omega)\chi \#b = \hat{\varphi}(\omega*\chi)b,
\]
where $\hat{\varphi}(\psi(a-)) = \varepsilon(a)$ is the left invariant functional for the discrete algebraic quantum group $(\hat{H},\hat{\Delta})$. 

Let 
\[
D = (A\#\hat{H})\#H,\qquad N =\hat{H}\# A,\qquad V = \begin{pmatrix} M \\ A \end{pmatrix},\qquad Q = \begin{pmatrix} Q_{11} & Q_{12} \\ Q_{21} & Q_{22}\end{pmatrix} = \begin{pmatrix} D & M \\ N & A \end{pmatrix},
\]
where we consider the latter two entries simply as direct sums of vector spaces for the moment. Consider the linear map
\[
\pi: Q \rightarrow \End_k(V),\qquad \begin{pmatrix} (a \# \omega)\#h & b \#\chi \\ \theta \# c & d \end{pmatrix} \mapsto \begin{pmatrix} \lambda^{A}_{\#}(a)\lambda_{\#}^{\hat{H}}(\omega) \lambda_{\#}^H(h) & \lambda_{\#}^A(b) \lambda^{\hat{H}}(\chi) \\  \Lambda^{\hat{H}}(\theta)\lambda_\#^A(c) & \lambda^A(d) \end{pmatrix}. 
\]
It is not hard to verify that $\pi$ is an injective map. Moreover, by the concrete commutations relations between the above operators, we find that the image is a subalgebra of $\End_k(V)$. Endowing $Q$ with the inherited algebra structure, we obtain maps
\[
Q_{ij}\otimes Q_{jk} \rightarrow Q_{ik}.
\]
By basic computations it is seen that the resulting algebra structures on $A,D$ coincide with the usual ones. As it is also easily seen that this Morita context is surjective, it follows by Theorem \ref{TheoAM} that $A$ is Morita equivalent to $D$. We are hence left with showing that the Morita context $(M,N)$ can be made equivariant. 

As $D$ and $A$ have local units, and $M,N$ are unital bimodules, it follows that $Q$ has local units. By \cite[Theorem 3.3]{VDZ99b}, it is enough to produce a (unital) $\hat{H}$-module algebra structure on $Q$, coinciding with the $\hat{H}$-module structure on $D$ and $A$ induced by the $H$-coaction. However, consider on $V$ the unital left $\hat{H}$-module structure defined by 
\[
\Theta(\omega) \begin{pmatrix} a\#\chi \\ b\end{pmatrix} = \begin{pmatrix} a\# \chi*\hat{S}(\omega) \\ \omega \cdot b \end{pmatrix}, 
\]
 Using \eqref{EqMultHopf} and the bijectivity of the antipode of $\hat{H}$, we can turn $\End_k(V)$ into a well-defined unital left $\hat{H}$-module algebra by  the $\Theta$-adjoint action
\[
\omega \cdot x = \Theta(\omega_{(1)}) x\Theta(\hat{S}(\omega_{(2)})). 
\]
It is then easily seen that
\[
\omega \cdot \begin{pmatrix} 0 & 0 \\ 0 & \lambda^A(a) \end{pmatrix} = \begin{pmatrix} 0 & 0 \\ 0 & \lambda^A(\omega \cdot a) \end{pmatrix},\qquad \omega \cdot \begin{pmatrix} 0 & \lambda^{\hat{H}}(\chi) \\ 0 & 0 \end{pmatrix} = \begin{pmatrix} 0 & \lambda^{\hat{H}}(\chi *\hat{S}(\omega)) \\ 0 & 0 \end{pmatrix},
\]
\[
\omega \cdot \begin{pmatrix} 0 & 0 \\ \Lambda^{\hat{H}}(\chi)& 0 \end{pmatrix}  = \begin{pmatrix} 0 &  0 \\ \Lambda^{\hat{H}}(\hat{\sigma}^{-1}\hat{S}^2(\omega)*\chi) & 0 \end{pmatrix},
\]
where $\hat{\sigma}$ is the modular automorphism of the discrete quantum group $(\hat{H},\hat{\Delta})$. It follows that the above $\hat{H}$-module algebra structure restricts to a unital $\hat{H}$-module algebra structure on $Q$. Finally, we need to see if the obtained module structure on $D$ coincides with the one coming from the $H$-coaction on $D$. This is equivalent with the following commutation relations for $a\in A,\omega,\chi \in \hat{H},h\in H$, which are straightforward to check:
\[
\Theta(\omega)\lambda_{\#}^A(a) = \lambda_{\#}^{A}(a) \Theta(\omega),\quad \Theta(\omega)\lambda_{\#}^{\hat{H}}(\chi) = \lambda_{\#}^{\hat{H}}(\chi) \Theta(\omega),\quad  \Theta(\omega)\lambda_{\#}^{H}(h) = \lambda_{\#}^{H}(h_{(1)}) \Theta(\omega(S^2(h_{(2)})-)).
\]
\end{proof}

\section{$I$-Galois objects and homogeneous coactions}

In this section we will establish our main result, which sets up a duality between \emph{free} actions on the one hand, and \emph{homogeneous actions} on the other hand, within the setting of module coalgebras for Hopf algebras with invariant functionals. Recall the notation $A^{\alpha}$ from Definition \ref{DefCoinv}.

\begin{Def} 
Let $(H,\Delta)$ be a Hopf algebra, and $(A,\alpha)$ an $H$-comodule algebra.

We call $(A,\alpha)$ a \emph{Galois coaction}, or \emph{free coaction}, or \emph{principal coaction}, if the \emph{Galois map} 
\[
\can: A \otimes_{A^{\alpha}} A \rightarrow A \otimes H,\quad a\otimes b \mapsto (a\otimes 1)\alpha(b)
\] 
is an isomorphism.

We call $(A,\alpha)$ a \emph{homogeneous coaction}, or \emph{transitive coaction}, if $A$ is unital and $A^{\alpha} = k1_A$. 
\end{Def} 

One can indeed interpret the above notions, within the proper setting, as corresponding to freeness and transitivity of group actions, see \cite{Pod95,Sch04}. 

\begin{Rem} 
Since 
\[
xy_{(0)}\otimes y_{(1)} = (x_{(0)}y)_{(0)}\otimes S^{-1}(x_{(1)})(x_{(0)}y)_{(1)}
\] 
and the map 
\[
A\otimes H \rightarrow A\otimes H,\quad x\otimes h \mapsto x_{(0)}\otimes S^{-1}(h)x_{(1)}
\] 
is bijective, also the map 
\begin{equation}\label{EqGalOth}
\widetilde{\can}: A\underset{A^{\alpha}}{\otimes} A\rightarrow A\otimes H,\quad x\otimes y \rightarrow \alpha(x)(y\otimes 1)
\end{equation}
will be bijective if $(A,\alpha)$ is Galois. We will refer to \eqref{EqGalOth} as the \emph{right Galois map}. 
\end{Rem}

Our aim will be to give a correspondence between particular classes of Galois coactions and homogeneous coactions. Note first that neither the Galois condition nor the homogeneity condition are invariant under general equivariant Morita equivalences. The Galois condition will however be preserved under special types of $H$-equivariant Morita equivalences, closely related to the Morita base change theory for quantum groupoids considered in \cite{Sch02} (in the presence of a unit). We first make the following observation. Recall \cite[Appendix]{VD94} that an algebra homomorphism $f: A\rightarrow B$ is \emph{non-degenerate} if it makes $B$ into a unital left and right $A$-module (in the obvious way). If then $(A,\alpha)$ is a comodule algebra with $H$-coinvariant local units, the embedding 
\[
A^{\alpha}\rightarrow A
\] 
is non-degenerate and we obtain an embedding 
\[
M(A^{\alpha}) \subseteq M(A)
\] 
of multiplier algebras.

\begin{Lem}\label{LemEqMor} 
Let $(H,\Delta)$ be a Hopf algebra, and let $(A,\alpha)$ be a Galois coaction with $H$-coinvariant local units. If $p\in M(A^{\alpha})$ is a full idempotent for $A^{\alpha}$, then $\alpha$ restricts to a Galois coaction on $pAp$ which is equivariantly Morita equivalent to $(A,\alpha)$.
\end{Lem}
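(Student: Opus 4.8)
The plan is to verify, in order: that $p$ is coinvariant strongly enough for $pAp$ to inherit a comodule algebra structure; that $(Ap,pA)$ is an $H$-equivariant strict Morita context between $A$ and $pAp$; and — the substantial point — that the Galois map of $pAp$ is bijective.

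First I would note that, $A$ being non-degenerate (it has local units), the embedding $A^{\alpha}\hookrightarrow A$ is non-degenerate and $M(A^{\alpha})\subseteq M(A)$, and that writing $a\in A=A^{\alpha}A=AA^{\alpha}$ and using $\alpha(b)=b\otimes1$ for $b\in A^{\alpha}$ gives $\alpha(pa)=(p\otimes1)\alpha(a)$ and $\alpha(ap)=\alpha(a)(p\otimes1)$ (with $\alpha$ extended to multipliers). Hence $pAp$ is a subalgebra, $\alpha$ restricts to a homomorphism $pAp\to pAp\otimes H$ making it an $H$-comodule algebra with $(pAp)^{\alpha}=pA^{\alpha}p$, and $pA^{\alpha}p$ has local units — by the discussion preceding Lemma \ref{LemFull}, $(A^{\alpha}p,pA^{\alpha})$ is a strict Morita context between $A^{\alpha}$ and $pA^{\alpha}p$ — so that $pAp$ is a unital $pA^{\alpha}p$-bimodule and, by Lemma \ref{LemLocUn}, has $H$-coinvariant local units. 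Next, from $A^{\alpha}=A^{\alpha}pA^{\alpha}$ one gets $A=A^{\alpha}A\subseteq ApA$, so $p$ is full in $A$, and, again by the discussion before Lemma \ref{LemFull} together with Theorem \ref{TheoAM}, $(Ap,pA)$ is a strict Morita context between $A$ and $pAp$. By coinvariance of $p$, $Ap$ and $pA$ are $H$-subcomodules of $A$ and the two structure maps $Ap\otimes_{pAp}pA\to A$ and $pA\otimes_{A}Ap\to pAp$, being restrictions of the multiplication of $A$, are $H$-comodule morphisms; so this context is $H$-equivariant and $(A,\alpha)\sim(pAp,\alpha)$ equivariantly.

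For the Galois property, the key observation is that $\can\colon A\otimes_{A^{\alpha}}A\to A\otimes H$ is an isomorphism of $A$-bimodules, where $A\otimes H$ carries the left action by multiplication on the first leg and the right action $(x\otimes h)\triangleleft a=(x\otimes h)\alpha(a)$; both actions extend to $M(A)$, and because $\alpha(p)=p\otimes1$ the idempotent $p$ acts from \emph{both} sides as left multiplication by $p$ on the first leg. Since moreover $pA$ (resp.\ $Ap$) is a direct summand of $A$ as a right (resp.\ left) $A^{\alpha}$-module via $1-p\in M(A^{\alpha})$, the map $\can$ restricts to an isomorphism of the corner $pA\otimes_{A^{\alpha}}Ap$ of the domain onto the corner $pAp\otimes H$ of the codomain. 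It remains to identify $pA\otimes_{A^{\alpha}}Ap$ with $pAp\otimes_{pA^{\alpha}p}pAp$ via the natural map $x\otimes y\mapsto x\otimes y$, under which the restricted $\can$ becomes exactly $\can_{pAp}$; granting this, $\can_{pAp}$ is bijective and $(pAp,\alpha)$ is Galois.

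I expect the main obstacle to be precisely this last identification $pA\otimes_{A^{\alpha}}Ap\cong pAp\otimes_{pA^{\alpha}p}pAp$ compatible with the two Galois maps, i.e.\ the statement that the ``$p$-corner'' of the Galois map of $A$ is the Galois map of the corner $pAp$. It amounts to the classical corner--tensor identity $e(A\otimes_{A^{\alpha}}A)e\cong eAe\otimes_{eA^{\alpha}e}eAe$ for a full idempotent, but has to be carried out with local units in place of a unit on $A^{\alpha}$ and using that multipliers may be moved across the balanced tensor product — which is where the hypotheses of $H$-coinvariant local units, fullness of $p$, and Theorem \ref{TheoAM} all genuinely enter. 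Everything else is routine bookkeeping with local units.
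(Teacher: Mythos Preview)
Your proposal is correct and follows essentially the same route as the paper: set up the equivariant Morita context $(Ap,pA)$, use fullness of $p$ in $A^{\alpha}$ to get fullness in $A$ and hence strictness, then restrict $\can$ to the corner $pA\otimes_{A^{\alpha}}Ap\to pAp\otimes H$ and identify the domain with $pAp\otimes_{pA^{\alpha}p}pAp$. The paper carries out the corner--tensor identification you flag as the main obstacle via the explicit chain
\[
pAp\otimes_{pA^{\alpha}p}pAp \;\cong\; pAp\otimes_{pA^{\alpha}p}(pA^{\alpha}\otimes_{A^{\alpha}}A^{\alpha}p)\otimes_{pA^{\alpha}p}pAp \;\cong\; pA\otimes_{A^{\alpha}}Ap,
\]
using that $(A^{\alpha}p,pA^{\alpha})$ is a strict Morita context between $A^{\alpha}$ and $pA^{\alpha}p$; your outline anticipates exactly this.
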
 

\begin{proof} 
Clearly $\beta = \alpha_{\mid pAp}$ is  a well-defined coaction on $B = pAp$, and $(M,N) = (Ap,pA)$ defines an equivariant Morita context between $A$ and $B$. Clearly $p$ is still a full idempotent in $A$ since 
\[
ApA = AA^{\alpha}pA^{\alpha}A = AA^{\alpha}A = A,
\]
hence the Morita context is strict. 

To see that $\beta$ is Galois, we note that the natural map 
\[
B\underset{B^{\beta}}{\otimes} B \rightarrow pA \underset{A^{\alpha}}{\otimes} Ap
\]
is a linear isomorphism: indeed, upon noting that $B^{\beta} = pA^{\alpha}p = pA^{\alpha}\underset{A^{\alpha}}{\otimes} A^{\alpha}p$, we have 
\begin{align*} 
B\underset{B^{\beta}}{\otimes} B &\cong B\underset{B^{\beta}}{\otimes}B^{\beta}\underset{B^{\beta}}{\otimes} B\\
&= pAp \underset{pA^{\alpha}p}{\otimes} pA^{\alpha}p  \underset{pA^{\alpha}p}{\otimes} pAp \\
&\cong pAp \underset{pA^{\alpha}p}{\otimes} (pA^{\alpha}\underset{A^{\alpha}}{\otimes} A^{\alpha}p)  \underset{pA^{\alpha}p}{\otimes} pAp\\
&\cong (pAp \underset{pA^{\alpha}p}{\otimes} pA^{\alpha})\underset{A^{\alpha}}{\otimes} (A^{\alpha}p \underset{pA^{\alpha}p}{\otimes} pAp) \\
&\cong pA \underset{A^{\alpha}}{\otimes} Ap.
\end{align*}
As the natural map 
\[
pA \underset{A^{\alpha}}{\otimes} Ap \rightarrow A\underset{A^{\alpha}}{\otimes} A
\] 
is injective, and as the Galois map for $\alpha$ thus restricts to an isomorphism 
\[
pA  \underset{A^{\alpha}}{\otimes} Ap \rightarrow B\otimes H,
\] 
we obtain that the Galois map of $\beta$ is an isomorphism as a composition of the isomorphisms 
\[
B \underset{B^{\beta}}{\otimes} B \cong pA\underset{A^{\alpha}}{\otimes} Ap \cong B\otimes H.
\] 
\end{proof}

We now arrive at one of our main theorems.

\begin{Theorem}\label{TheoMain} 
Let $(H,\Delta)$ be a Hopf algebra with invariant functionals, and let $(A,\alpha)$ be an $H$-equivariantly absolutely semisimple algebra with $H$-coinvariant local units. Then $(A,\alpha)$ is equivariantly Morita equivalent to a Galois coaction $(B,\beta)$ with $B^{\beta}\cong k_I$ for some set $I$.
\end{Theorem}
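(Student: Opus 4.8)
The plan is to combine the biduality of Proposition~\ref{PropEqMorDual} with the structure theory of Section~1. The guiding observations are that the iterated smash product $(A\#\hat H)\#H$ is automatically Galois over its coinvariant subalgebra $A\#\hat H$, that $A\#\hat H$ is absolutely semisimple and hence Morita equivalent to $k_I$ for a suitable set $I$, and that a full idempotent implementing this Morita equivalence can be used to cut $(A\#\hat H)\#H$ down to the coaction $(B,\beta)$ we are after. To set things up: by Theorem~\ref{TheoEqMor} the hypothesis that $(A,\alpha)$ is $H$-equivariantly absolutely semisimple is equivalent to $A\#\hat H$ being an absolutely semisimple $k$-algebra, and it has local units by Lemma~\ref{LemLocUnSmash}; so by Theorem~\ref{TheoMainSemiSimp} it is isomorphic to $\oplus_{i\in I}M_{W_i}(V_i)$ for suitable data $(V_i,W_i)$, and by Lemma~\ref{LemFull} there is a full idempotent $p\in M(A\#\hat H)$ with $p(A\#\hat H)p\cong k_I$.

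Next I would pass to the bidual. Put $C=(A\#\hat H)\#H$ and $\beta=\widehat{\widehat{\alpha}}_{S^2}$; by Proposition~\ref{PropEqMorDual}, $(A,\alpha)$ is equivariantly Morita equivalent to $(C,\beta)$. Since $\widehat{\widehat{\alpha}}_{S^2}=(\id\otimes S^2)\circ\widehat{\widehat{\alpha}}$ with $S$ invertible (Proposition~\ref{PropHopfInv}), one checks that $C^{\beta}=C^{\widehat{\widehat{\alpha}}}=A\#\hat H$, and, since these coinvariant algebras coincide, the fact noted in the text that $C$ has $\widehat{\widehat{\alpha}}$-coinvariant local units (because $A$ has $H$-coinvariant local units) gives that $C$ has $\beta$-coinvariant local units as well. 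I would then establish that $(C,\beta)$ is a Galois coaction. For the untwisted coaction $\widehat{\widehat{\alpha}}$ this is the standard fact that a smash product $D\#H$ of a left $H$-module algebra $D$ with $H$ is $H$-Galois over $D\#1\cong D$ (here $D=A\#\hat H$, which has local units), the inverse of the canonical map being the familiar antipode formula; and since the Galois map of $\beta$ is the Galois map of $\widehat{\widehat{\alpha}}$ post-composed with the bijection $\id\otimes S^2$, it is bijective as well.

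Finally I would cut down. Using the non-degeneracy of the embedding $C^{\beta}\hookrightarrow C$ we get $M(A\#\hat H)=M(C^{\beta})\subseteq M(C)$, so the idempotent $p$ from the first paragraph is a full idempotent for $C^{\beta}$. Applying Lemma~\ref{LemEqMor} to the Galois coaction $(C,\beta)$, the coaction $\beta$ restricts to a Galois coaction $\beta'$ on $B:=pCp$, and $(B,\beta')$ is equivariantly Morita equivalent to $(C,\beta)$; moreover $B^{\beta'}=pCp\cap C^{\beta}=p(A\#\hat H)p\cong k_I$. Since equivariant Morita equivalence is an equivalence relation, chaining the two equivalences shows that $(A,\alpha)$ is equivariantly Morita equivalent to the Galois coaction $(B,\beta')$ with $B^{\beta'}\cong k_I$, which is what is claimed.

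The step I expect to be the main obstacle is the Galois assertion for $(C,\beta)$: the classical proof that smash products are $H$-Galois is usually written for unital algebras, so I would have to check that the usual inverse of the canonical map --- built from the antipode of $H$ and a local unit of $A\#\hat H$ --- is well-defined and independent of the chosen local unit in the non-unital framework at hand. By contrast, the $S^2$-twist in Proposition~\ref{PropEqMorDual} is merely cosmetic here, absorbed by the bijection $\id\otimes S^2$.
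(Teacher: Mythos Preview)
Your proposal is correct and follows essentially the same route as the paper: pass to the bidual $((A\#\hat H)\#H,\widehat{\widehat{\alpha}}_{S^2})$ via Proposition~\ref{PropEqMorDual}, observe that this coaction is Galois with coinvariants $A\#\hat H$ (absolutely semisimple by Theorem~\ref{TheoEqMor}), and then cut down by a full idempotent from Lemma~\ref{LemFull} using Lemma~\ref{LemEqMor}. The paper is terser---it simply declares the Galois property of $\widehat{\widehat{\alpha}}$ ``well-known and easily checked'' and does not spell out the $S^2$-twist or the coinvariant identification---but your more detailed justifications of these points are accurate and do not diverge from the intended argument.
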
 
\begin{proof}
Consider the coaction $((A\#\hat{H})\#H,\widehat{\widehat{\alpha}}_{S^2})$. From Proposition \ref{PropEqMorDual}, we know that this coaction is equivariantly Morita equivalent to $(A,\alpha)$. On the other hand, it is well-known and easily checked that $\widehat{\widehat{\alpha}}$ is Galois, hence $\widehat{\widehat{\alpha}}_{S^2}$ is too. 

Now the algebra of coinvariants for $\widehat{\widehat{\alpha}}_{S^2}$ is $A\#\hat{H}$, which is absolutely semisimple by Theorem \ref{TheoEqMor}. By Lemma \ref{LemFull}, we can pick a full projection $p\in M(A\#\hat{H})$, so that  by Lemma \ref{LemEqMor} we find that  $((A\#\hat{H})\#H,\widehat{\widehat{\alpha}}_{S^2})$ is equivariantly Morita equivalent to a Galois coaction $(B,\beta)$ with $B^{\beta}\cong k_I$ for some set $I$. The theorem now follows by transitivity of equivariant Morita equivalence. 
\end{proof} 

We will reserve a special terminology for the Galois coactions appearing in the previous theorem.

\begin{Def}
Let $I$ be a set, and $(H,\Delta)$ a Hopf algebra. We call \emph{$I$-Galois object} any Galois coaction $(A,\alpha)$ for which $A^{\alpha}\cong k_{I}$.
\end{Def} 

We then have the following partial converse to Theorem \ref{TheoMain}.

\begin{Prop}\label{PropExLoc}
Let $I$ be a set, $(H,\Delta)$ a Hopf algebra with invariant functionals $(\varphi,\psi)$, and $(A,\alpha)$ an $I$-Galois object. Then $(A,\alpha)$ has $H$-coinvariant local units and is  equivariantly absolutely semisimple. 
\end{Prop}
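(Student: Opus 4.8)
The plan is to establish the two assertions in turn, starting from the defining data: $A$ is unital (since $A^\alpha\cong k_I$ is unital, being the function algebra on $I$ with pointwise operations, and $A$ is assumed a comodule algebra — actually I should be careful here, see below), and the Galois map $\can$ is bijective. First I would produce the $H$-coinvariant local units. The algebra $k_I=\oplus_{i\in I}k$ has local units: for any finite subset one takes the indicator function of the corresponding finite subset of $I$, which is an idempotent. The point is to transport these to $A$, i.e.\ to show that for a finite subset $\{a_1,\dots,a_n\}\subseteq A$ there is an idempotent $e\in A^\alpha$ with $ea_j=a_j=a_je$. The natural approach is to use that $A$ is unital as an $A^\alpha$-module on both sides — equivalently, that $1_A$ lies in $A^\alpha A=AA^\alpha$, so that $1_A=\sum_k e_k b_k$ with $e_k\in A^\alpha$; then an idempotent in $A^\alpha$ dominating the finitely many $e_k$ (using local units of $k_I\cong A^\alpha$) will serve as a two-sided unit for all of $A$, hence a fortiori for $\{a_1,\dots,a_n\}$. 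So the real content is: \emph{$1_A\in A^\alpha A$}. This should follow from the Galois condition: applying $\widetilde{\can}$ (the right Galois map \eqref{EqGalOth}), or rather its surjectivity, one can hit $1_A\otimes 1_H$, obtaining $\sum x_j\otimes y_j\in A\otimes_{A^\alpha}A$ with $\sum \alpha(x_j)(y_j\otimes 1)=1_A\otimes 1_H$; applying $\id\otimes\varepsilon$ gives $\sum x_j y_j=1_A$, while the $H$-component forces $\sum x_j\otimes y_j$ to lie in the coinvariant part of the tensor product, which upon multiplying back down lands in $A^\alpha A$. (This is the standard "$\can$ split by a translation map" argument.) Thus $A$ has $H$-coinvariant local units — and hence, by Lemma \ref{LemLocUnSmash}, $A\#\hat H$ has local units.

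For the second assertion, equivariant absolute semisimplicity, the strategy is to pass to the smash product and invoke Theorem \ref{TheoEqMor}: it suffices to show $A\#\hat H$ is Morita equivalent to $k_J$ for some index set $J$, equivalently (by Theorem \ref{TheoSemisimp}) that $A\#\hat H$ is absolutely semisimple. Here the key structural input is that for a Galois coaction the smash product $A\#\hat H$ is closely tied to $\End(A)$-type algebras over the coinvariants — more precisely, I expect $A\#\hat H$ to be Morita equivalent (via the bimodule $A$) to $A^\alpha\cong k_I$ itself. The cleanest route: the Galois map gives an isomorphism $A\otimes_{A^\alpha}A\cong A\otimes H$, and dualizing/combining with the identification $A\otimes \hat H\cong A\#\hat H$, one shows that $A$ is an $(A\#\hat H, A^\alpha)$-bimodule implementing a (surjective, hence by Theorem \ref{TheoAM} strict) Morita context between $A\#\hat H$ and $A^\alpha$. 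Concretely, one checks $A\cdot A^\alpha$-span and $(A\#\hat H)$-span exhaust, and that the two multiplication-pairing maps $A\otimes_{A^\alpha}(\text{dual of }A)\to A\#\hat H$ and $(\text{dual})\otimes_{A\#\hat H}A\to A^\alpha$ are onto; surjectivity of the first is exactly the surjectivity of $\can$. Since $A^\alpha\cong k_I$ is (trivially) absolutely semisimple and has local units, Morita invariance of absolute semisimplicity (noted after Theorem \ref{TheoAM}) gives that $A\#\hat H$ is absolutely semisimple, and then Theorem \ref{TheoEqMor} yields that $(A,\alpha)$ is $H$-equivariantly absolutely semisimple.

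The main obstacle I anticipate is the bookkeeping in identifying the Morita context between $A\#\hat H$ and $A^\alpha$ — in particular pinning down the "dual" bimodule of $A$ and verifying that the pairings are well-defined over the correct balanced tensor products and are surjective. One has to use the $\hat H$-module picture of the $H$-coaction (Lemma \ref{LemDualModComod}), the multiplication rules for $\hat H\#A=A\#\hat H$ recorded after Definition of the smash product, and the bijectivity of $\can$ together with the local units just constructed, to see that $A\otimes_{A^\alpha}A\cong A\otimes H$ really does translate into the strictness of the context. Everything else — the local-units argument, the invocations of Theorems \ref{TheoSemisimp}, \ref{TheoAM}, \ref{TheoEqMor} and Lemma \ref{LemLocUnSmash} — should be routine once this identification is in hand.
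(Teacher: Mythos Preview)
Your second half --- passing to $A\#\hat H$ and invoking the Morita equivalence with $A^{\alpha}\cong k_I$ coming from the Galois condition --- is exactly what the paper does (it simply cites \cite[Theorem 4.4]{VDZ99b} for that equivalence), so that part is fine.

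The first half has a genuine gap. You repeatedly invoke a unit $1_A\in A$, but $A$ is \emph{not} assumed unital: recall $k_I=\oplus_{i\in I}k$ is the algebra of \emph{finitely supported} functions on $I$, which has no unit when $I$ is infinite, so neither $A^{\alpha}$ nor $A$ need be unital. Your plan ``hit $1_A\otimes 1_H$ with $\widetilde{\can}$ and apply $\id\otimes\varepsilon$'' therefore does not get off the ground. Even the unit-free variant --- hitting $a\otimes 1_H$ and applying $\id\otimes\varepsilon$ --- only gives back $\sum x_jy_j=a$, which says nothing about membership in $AA^{\alpha}$; and your claim that ``the $H$-component forces $\sum x_j\otimes y_j$ to lie in the coinvariant part'' is not justified by the equation $\sum\alpha(x_j)(y_j\otimes 1)=a\otimes 1_H$.

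The missing idea is to use the \emph{invariant functional} rather than the counit. Pick $h\in H$ with $\varphi(h)=1$; for each $a\in A$ choose $\sum x_i\otimes y_i$ with $\can(\sum x_i\otimes y_i)=a\otimes h$, i.e.\ $\sum (x_i\otimes 1)\alpha(y_i)=a\otimes h$. Applying $\id\otimes\varphi$ gives
\[
a=\sum_i x_i\,(\id\otimes\varphi)\alpha(y_i)\in AA^{\alpha},
\]
since the Reynolds operator $(\id\otimes\varphi)\alpha$ lands in $A^{\alpha}$ by left invariance of $\varphi$. The inclusion $A\subseteq A^{\alpha}A$ is obtained symmetrically from $\widetilde{\can}$. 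Once $AA^{\alpha}=A=A^{\alpha}A$ is known, your reduction to local units in $k_I$ goes through. The essential point is that the invariant functional, not the counit, is what produces coinvariant elements without any unitality hypothesis.
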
 
\begin{proof} 
As clearly $A^{\alpha}$ has local units, it is enough to show that 
\begin{equation}\label{EqLocalUnit}
AA^{\alpha} = A^{\alpha}A = A
\end{equation}
to prove that $A$ has $H$-coinvariant local units. However, choose $h\in H$ with $\varphi(h) = 1$. By the Galois condition we can pick for each $a\in A$ elements $x_i,y_i\in A$ with 
\[
a\otimes h = \sum_i (x_i\otimes 1)\alpha(y_i).
\] 
Applying $\id\otimes \varphi$ and noting that $(\id\otimes \varphi)\alpha(x) \in A^{\alpha}$ for all $x\in A$, it follows that $a \in AA^{\alpha}$.

Similarly, using the bijectivity of \eqref{EqGalOth}, we obtain $A^{\alpha}A = A$. 

To prove that $(A,\alpha)$ is absolutely equivariantly semisimple, it is by Theorem \ref{TheoEqMor} sufficient to prove that $A\#\hat{H}$ is absolutely semisimple. But  since $\alpha$ is Galois, we have by \cite[Theorem 4.4]{VDZ99b} that the algebra $A\# \hat{H}$ is Morita equivalent to the absolutely semisimple algebra $A^{\alpha} =k_I$. 
\end{proof}

If $(H,\Delta)$ is a Hopf algebra and $(A,\alpha)$ a comodule algebra with fixed isomorphism $A^{\alpha} \cong k_{I}$, we will write the minimal idempotent elements of $A^{\alpha} \cong k_I$ as $p_i$. We can then form the vector subspaces 
\[
A_{ij} = p_iAp_j.
\]

\begin{Lem} 
In the above setting, each $A_{ii}$ is a homogeneous $(H,\Delta)$-comodule algebra, and each couple $(A_{ij},A_{ji})$ defines by multiplication an equivariant Morita context between $A_{ii}$ and $A_{jj}$.
\end{Lem}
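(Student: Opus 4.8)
The plan is to check everything by direct computation with the idempotents $p_i$; no use of the Galois or semisimplicity hypotheses is needed, since the statement only involves a Hopf algebra $(H,\Delta)$ and a comodule algebra $(A,\alpha)$ with a fixed isomorphism $A^{\alpha}\cong k_I$.

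First I would record the elementary facts about the $p_i$. Since $A^{\alpha}\cong k_I$ and the $p_i$ are the images of the minimal idempotents, we have $p_i^2=p_i$, $p_ip_j=0$ for $i\neq j$, and $p_i\in A^{\alpha}$, so $\alpha(p_i)=p_i\otimes 1$. For $x=p_iap_i\in A_{ii}$ one has $p_ix=x=xp_i$, so $A_{ii}$ is a \emph{unital} $k$-algebra with unit $p_i$ (even though $A$ itself need not be unital). Next, for $x=p_iap_j\in A_{ij}$,
\[
\alpha(x)=(p_i\otimes 1)\,\alpha(a)\,(p_j\otimes 1)=p_ia_{(0)}p_j\otimes a_{(1)}\in A_{ij}\otimes H,
\]
because $\alpha$ is an algebra homomorphism and $p_i,p_j$ are coinvariant. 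Hence each $A_{ij}$ is a subcomodule of $(A,\alpha)$ (the comodule axioms being inherited), and in particular $\alpha$ restricts to an $H$-comodule algebra structure $\alpha_{ii}$ on each $A_{ii}$.

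For homogeneity I would compute the coinvariants: clearly $(A_{ii})^{\alpha_{ii}}=A_{ii}\cap A^{\alpha}$. If $x$ lies in this intersection, then $x=p_ixp_i$ with $x\in A^{\alpha}$, so $x\in p_iA^{\alpha}p_i$; conversely $p_iA^{\alpha}p_i\subseteq A^{\alpha}\cap A_{ii}$ since $A^{\alpha}$ is a subalgebra containing all $p_j$. Thus $(A_{ii})^{\alpha_{ii}}=p_iA^{\alpha}p_i$, and since $A^{\alpha}\cong k_I$ is commutative with $p_i$ a minimal idempotent, $p_iA^{\alpha}p_i=kp_i=k\,1_{A_{ii}}$. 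So $(A_{ii},\alpha_{ii})$ is homogeneous.

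Finally, for the Morita context, restricting the product of $A$ gives $k$-bilinear maps $A_{ab}\times A_{bc}\to A_{ac}$ for $a,b,c\in\{i,j\}$, since $(p_axp_b)(p_byp_c)=p_a(xp_by)p_c\in p_aAp_c$; associativity is inherited from $A$, so we get a Morita context with corner algebras $A_{ii},A_{jj}$ and induced bimodule maps $A_{ij}\otimes_{A_{jj}}A_{ji}\to A_{ii}$ and $A_{ji}\otimes_{A_{ii}}A_{ij}\to A_{jj}$, again coming from multiplication, with balancedness and bimodularity being associativity in $A$. Equivariance is immediate: $\alpha$ being an algebra homomorphism says $\alpha(xy)=x_{(0)}y_{(0)}\otimes x_{(1)}y_{(1)}$, which is exactly the assertion that each multiplication map is an $H$-comodule morphism for the restricted coactions. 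Hence $(A_{ij},A_{ji})$ with these multiplications is an $H$-equivariant Morita context between $A_{ii}$ and $A_{jj}$. I do not expect any genuine obstacle; the only point that needs a moment's care is the identification $A_{ii}\cap A^{\alpha}=p_iA^{\alpha}p_i$ — that a coinvariant element of $A_{ii}$ already lies in $p_iA^{\alpha}p_i$ — which is precisely where one uses that $p_i$ is a unit for $A_{ii}$.
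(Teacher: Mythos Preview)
Your proof is correct and follows essentially the same approach as the paper, which simply records that $A_{ii}$ is unital with unit $p_i$, that $(p_iAp_i)^{\alpha}=p_iA^{\alpha}p_i=kp_i$, and declares the Morita context verification trivial. Your argument just fills in the routine details the paper leaves implicit.
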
 
\begin{proof} 
Clearly $A_{ii}$ is a unital algebra, with unit $p_i$, to which $\alpha$ restricts. Since 
\[
(p_iAp_i)^{\alpha} = p_i A^{\alpha}p_i = kp_i,
\] 
it is clear that $A_{ii}$ is homogeneous. It is trivially verified that the $(A_{ij},A_{ji})$ then define equivariant Morita contexts.
\end{proof} 

As $A$ has $H$-coinvariant local units, it follows that we have a direct sum decomposition 
\[
A = \oplus_{ij} A_{ij}.
\] 

We now aim to show that in the setting of $I$-Galois objects, the $A_{ij}$ define in fact strict Morita contexts \emph{if} they are non-zero. For this, we first introduce the following terminology.

\begin{Def}\label{DefReyn}
Let $(H,\Delta)$ be a Hopf algebra with invariant functionals $(\varphi,\psi)$, and let $(A,\alpha)$ be a comodule algebra. We define the \emph{Reynolds operator}, or \emph{averaging operator}, as 
\begin{equation}\label{EqDefRey}
\Phi: A \rightarrow  A,\quad a \mapsto (\id\otimes \varphi)\alpha(a).
\end{equation}
\end{Def} 

The following easy lemma was already implicitly used in Proposition \ref{PropExLoc}.

\begin{Lem} 
The image of $\Phi$ lies in $A^{\alpha}$, and $\Phi$ is $A^{\alpha}$-bimodular.
\end{Lem}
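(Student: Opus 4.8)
The statement has two parts. First, $\Phi(A)\subseteq A^{\alpha}$: this is the averaging property of $\varphi$. Applying $\alpha$ to $\Phi(a)$ and using coassociativity $(\alpha\otimes\id)\alpha = (\id\otimes\Delta)\alpha$ together with left invariance $(\id\otimes\varphi)\Delta(h) = \varphi(h)1$, one computes
\[
\alpha(\Phi(a)) = (\id\otimes\id\otimes\varphi)(\id\otimes\Delta)\alpha(a) = (\id\otimes\id\otimes\varphi)(\alpha\otimes\id)\alpha(a).
\]
Writing $\alpha(a) = a_{(0)}\otimes a_{(1)}$ this is $\alpha(a_{(0)})\otimes\varphi(a_{(1)})$, and applying left invariance in the last two legs gives $a_{(0)}\otimes\varphi(a_{(1)})1 = \Phi(a)\otimes 1$. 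Hence $\Phi(a)\in A^{\alpha}$.

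\textbf{Bimodularity.} For the second claim, let $c\in A^{\alpha}$, so $\alpha(c) = c\otimes 1$. Since $\alpha$ is an algebra homomorphism, $\alpha(ca) = \alpha(c)\alpha(a) = (c\otimes 1)\alpha(a) = ca_{(0)}\otimes a_{(1)}$, so $\Phi(ca) = (\id\otimes\varphi)(ca_{(0)}\otimes a_{(1)}) = c\,\Phi(a)$; similarly $\alpha(ac) = \alpha(a)(c\otimes 1) = a_{(0)}c\otimes a_{(1)}$, giving $\Phi(ac) = \Phi(a)c$. Thus $\Phi$ is $A^{\alpha}$-bimodular.

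\textbf{Main obstacle.} There is essentially no obstacle here: the only subtlety is that $A$ need not be unital, so one should not invoke $\Phi(1)$ or any unital normalization, but the computation above uses only the comodule-algebra axioms and left invariance of $\varphi$, both of which hold verbatim in the non-unital setting. (In the intended application $A$ does carry $H$-coinvariant local units, but this plays no role in the proof of the lemma itself.) I would write the proof as the two short displayed computations above, with a sentence each, occupying only a few lines.
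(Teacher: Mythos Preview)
Your proof is correct and is exactly the standard argument; the paper in fact omits the proof entirely, simply calling the lemma ``easy'' and noting it was already implicitly used earlier. One minor presentational point: in your displayed chain you have the two middle expressions in the more natural order reversed (the identity $\alpha(\Phi(a)) = (\id\otimes\id\otimes\varphi)(\alpha\otimes\id)\alpha(a)$ is immediate from the definition of $\Phi$, and coassociativity then gives the $(\id\otimes\Delta)$ form to which left invariance applies), but the content is right.
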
 

Note however that $\Phi$ does not need to be the identity on $A^{\alpha}$ - this will happen only if $\varphi(1) = 1$. 

For $A$ an $I$-Galois object, it follows that there exist functionals 
\begin{equation}\label{EqVarphii}
\varphi_i: A_{ii} \rightarrow k
\end{equation}
such that 
\begin{equation}\label{EqDefPhi}
\Phi(a) = \sum_i \varphi_{i}(p_iap_i)p_i.
\end{equation}

In the following, we will fix $(H,\Delta)$ with invariant functionals $\varphi,\psi$, and use notation as above.

\begin{Lem}\label{FaithfulPhi} 
Let $(A,\alpha)$ be an $I$-Galois object for $(H,\Delta)$. For each $i,j$, the pairing 
\[
A_{ij}\times A_{ji} \rightarrow A_{ii}, \quad (x,y)\mapsto \varphi_i(xy)
\] 
is non-degenerate.
\end{Lem}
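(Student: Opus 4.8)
The plan is to exploit the Galois condition directly on the cutdown $A_{ij} \otimes_{A_{jj}} A_{ji}$ and to use faithfulness of the invariant functional $\varphi$ on $H$. First I would observe that we may assume $A_{ij} \neq 0$, since otherwise the pairing is trivially over the zero space and there is nothing to prove. Next, the key structural input is that $\alpha$ restricts to a coaction on $B = p_i A p_j \oplus p_j A p_i \oplus A_{ii} \oplus A_{jj}$ with coinvariant algebra $k p_i \oplus k p_j$, and — by the same cutdown argument as in Lemma~\ref{LemEqMor} applied to the full idempotent $p_i + p_j \in M(A^\alpha)$ — that $(B,\beta)$ is again Galois. (If $p_i + p_j$ fails to be full in $A^\alpha = k_I$ for $I$ larger than $\{i,j\}$, one instead argues directly that the restricted Galois map on $A_{ij}\otimes_{A_{jj}} A_{ji}$, mapping into $A_{ii}\otimes H$, is injective because it is a restriction of the global $\widetilde{\can}$ along an injective inclusion, exactly as in the displayed chain of isomorphisms in the proof of Lemma~\ref{LemEqMor}.)

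With that in hand, suppose $x \in A_{ij}$ satisfies $\varphi_i(xy) = 0$ for all $y \in A_{ji}$; I want to conclude $x = 0$. The idea is: by the right Galois condition \eqref{EqGalOth} for the cutdown coaction, for any $h \in H$ I can write $p_i \otimes h = \sum_k \alpha(u_k)(v_k \otimes 1)$ with $u_k \in A_{ij}$, $v_k \in A_{ji}$ (using the $p_i$-corner and $A_{jj}$-balancing). Then consider the element $\Phi(x^* \cdots)$ — more precisely, pair $x$ against these Galois coordinates: compute $(\id \otimes \varphi(\,\cdot\, h))$ applied to $\alpha(x)\cdot(\text{something})$ and use the identity $(\id\otimes\varphi)\alpha(ab) = \Phi(ab)$ together with \eqref{EqDefPhi} to turn the expression into a sum of terms $\varphi_i(x v_k) u_k$-type contributions, which vanish by hypothesis. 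Running this for all $h \in H$ and invoking faithfulness of $\varphi$ (Proposition~\ref{PropHopfInv}, second-to-last item — $\hat H$ separates the points of $H$) forces $\alpha(x) = 0$, hence $x = 0$. The symmetric argument using the left Galois map \can\ handles degeneracy in the second variable.

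The main obstacle I expect is bookkeeping the corner indices correctly: one must be careful that the Galois coordinates $u_k, v_k$ produced from $p_i \otimes h$ genuinely land in $A_{ij}$ and $A_{ji}$ respectively (this uses that $\alpha$ preserves each graded piece $A_{rs}$, since the $p_r$ are coinvariant), and that the balancing $\otimes_{A_{jj}}$ rather than $\otimes_{A^\alpha}$ is the right one — which is precisely what the cutdown computation in Lemma~\ref{LemEqMor} guarantees. A secondary subtlety is that $\varphi(1)$ need not equal $1$, so $\Phi$ is not a conditional expectation; but this does not matter here, since we only use that $\Phi$ records the $\varphi_i$-data via \eqref{EqDefPhi} and that $\varphi$ itself is faithful. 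Once the index combinatorics are set up, the argument is a routine application of the Galois isomorphism plus faithfulness of the Haar functional.
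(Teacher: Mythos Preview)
Your core strategy---surjectivity of a Galois map plus faithfulness of $\varphi$---is exactly what the paper uses, but your implementation is more convoluted than needed and contains a concrete error. The claim that the Galois coordinates for $p_i\otimes h$ under $\widetilde{\can}^{-1}$ land in $A_{ij}\otimes A_{ji}$ is false: since $\widetilde{\can}$ sends $A_{is}\otimes A_{si}$ into $A_{ii}\otimes H$ for \emph{every} $s$, the preimage of $p_i\otimes h$ has components in $\bigoplus_s A_{is}\otimes A_{si}$ (or $s\in\{i,j\}$ in your cutdown), not just $s=j$. The argument is salvageable because for $x\in A_{ij}$ and $v\in A_{si}$ with $s\neq j$ one has $xv=0$, so the unwanted components do not contribute---but this must be said, and your phrase ``using the $A_{jj}$-balancing'' is not what the cutdown actually gives (the balancing is over $kp_i\oplus kp_j$). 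The subsequent step ``compute $(\id\otimes\varphi(\,\cdot\, h))$ applied to $\alpha(x)\cdot(\text{something})$ and turn the expression into $\sum\varphi_i(xv_k)u_k$'' is too vague to verify as stated; it is not clear which identity you are invoking or how it yields $\alpha(x)=0$.

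The paper's argument avoids all of this by working globally in $A$ and never inverting the Galois map. For $a\in A_{ji}$ with $\varphi_i(ba)=0$ for all $b\in A_{ij}$, one observes immediately that $\Phi(ba)=\varphi_i(p_ibp_j\cdot a)p_i=0$ for \emph{all} $b\in A$. Writing this out gives $cb_{(0)}a_{(0)}\varphi(b_{(1)}a_{(1)})=0$ for all $b,c\in A$; since the elements $cb_{(0)}\otimes b_{(1)}$ span $A\otimes H$ by surjectivity of $\can$, one obtains $da_{(0)}\varphi(ha_{(1)})=0$ for all $d\in A,h\in H$, and faithfulness of $\varphi$ together with non-degeneracy of $A$ forces $a=0$. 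The other variable is handled symmetrically using $\widetilde{\can}$. No cutdown, no explicit Galois coordinates, no corner bookkeeping. Your plan can be made to work along these same lines, but you should drop the passage to $B$ and the incorrect corner claim, and replace the vague pairing step by the direct lift from $\varphi_i$ to $\Phi$ on all of $A$.
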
 

\begin{proof} 
We are to prove that for any $a\in A_{ji}$ there exists $b\in A_{ij}$ with $\varphi_{i}(ba) \neq 0$, and vice versa. However, fix $a\in A_{ji}$. If $\varphi_i(ba) = 0$ for all $b\in A_{ij}$, it follows that $\Phi(ba) = 0$ for all $b\in A$, and hence 
\[
cb_{(0)}a_{(0)}\varphi(b_{(1)}a_{(1)}),\qquad \forall b,c\in A.
\] 
By the surjectivity of the Galois map, this implies 
\[
ca_{(0)}\varphi(ha_{(1)})=0,\qquad \forall c\in A,h\in H.
\] 
Since $\varphi$ is faithful, and $A$ is non-degenerate, this implies $a_{(0)}\otimes a_{(1)}= 0$, and hence, upon applying the counit, $a=0$.

The non-degeneracy on the other side can be shown similarly using also the surjectivity of the right Galois map $\widetilde{\can}$, see the proof of Proposition \ref{PropExLoc}.
\end{proof} 

The above strong faithfulness of the $\varphi_i$ gives the following important result. 

\begin{Cor}\label{CorEqMorHom} 
Let $A$ be an $I$-Galois object for $(H,\Delta)$. Then either $A_{ij} = A_{ji} = 0$ or $(A_{ij},A_{ji})$ is a strict equivariant Morita context between $A_{ii}$ and $A_{jj}$. 
\end{Cor}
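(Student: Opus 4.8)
The plan is to use the non-degeneracy of the pairing $A_{ij} \times A_{ji} \to A_{ii}$ from Lemma \ref{FaithfulPhi} to deduce surjectivity of the Morita multiplication maps, and then invoke Theorem \ref{TheoAM}. First I would dispose of the degenerate case: if $A_{ij} = 0$, then by the non-degeneracy of the pairing in Lemma \ref{FaithfulPhi} we must also have $A_{ji} = 0$ (any $a \in A_{ji}$ would otherwise admit $b \in A_{ij} = 0$ with $\varphi_i(ba) \ne 0$, which is absurd), so the two subspaces vanish together and there is nothing more to check.

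So assume $A_{ij} \ne 0$, whence $A_{ji} \ne 0$. The essential point is to show that the multiplication map $A_{ij} A_{ji} \to A_{ii}$ is surjective, i.e.\ equals $A_{ii} = kp_i$; equivalently, that it is nonzero. Suppose for contradiction that $A_{ij}A_{ji} = 0$. Then $\varphi_i(xy) = 0$ for all $x \in A_{ij}$, $y \in A_{ji}$ (since $xy = 0$), contradicting the non-degeneracy of the pairing (which in particular forces $A_{ij}A_{ji} \ne 0$ as soon as $A_{ji} \ne 0$). Hence $A_{ij}A_{ji}$ is a nonzero subspace of $kp_i$, so $A_{ij}A_{ji} = kp_i = A_{ii}$, and symmetrically $A_{ji}A_{ij} = kp_j = A_{jj}$. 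Together with the trivially-checked relations $A_{ii}A_{ij} = A_{ij}$ and $A_{ij}A_{jj} = A_{ij}$ (which hold because $p_i, p_j$ act as units and $A$ has $H$-coinvariant local units giving the direct sum decomposition $A = \oplus_{ij} A_{ij}$), this shows the Morita context $(A_{ij}, A_{ji})$ between $A_{ii}$ and $A_{jj}$ is surjective.

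Now $A_{ii} = kp_i$ and $A_{jj} = kp_j$ are unital, hence in particular are $k$-algebras with local units, so Theorem \ref{TheoAM} applies and a surjective Morita context between them is automatically strict. Since the multiplication maps defining this context are $H$-comodule morphisms (the coaction $\alpha$ being multiplicative and the $p_i$ being coinvariant), the context is strict as an \emph{equivariant} Morita context. This gives the claimed dichotomy. The only mild subtlety — and the one place to be careful — is the bookkeeping that the non-degeneracy in Lemma \ref{FaithfulPhi} really does rule out $A_{ij}A_{ji} = 0$ while $A_{ji} \ne 0$; but this is immediate from the definition of non-degeneracy, so no genuine obstacle arises and the proof is essentially a short deduction from the two preceding results.
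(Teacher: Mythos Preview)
There is a genuine error in your argument: you write ``$A_{ij}A_{ji} \to A_{ii}$ is surjective, i.e.\ equals $A_{ii} = kp_i$'', but $A_{ii} = p_iAp_i$ is \emph{not} equal to $kp_i$ in general. What is true is that the \emph{coinvariant} subalgebra $(A_{ii})^{\alpha} = p_iA^{\alpha}p_i$ equals $kp_i$; the algebra $A_{ii}$ itself is typically much larger (for example, take $A = H$ with $\alpha = \Delta$: then $I$ is a singleton and $A_{11} = H$). Consequently, your contradiction argument only establishes $A_{ij}A_{ji} \neq 0$, which falls well short of $A_{ij}A_{ji} = A_{ii}$.

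The paper closes this gap with a small but essential trick that your argument misses. Given $x \in A_{ij}$ and $y \in A_{ji}$ with $\varphi_i(xy) = 1$ (from Lemma~\ref{FaithfulPhi}), one computes
\[
\Phi(xy) = x_{(0)}y_{(0)}\,\varphi(x_{(1)}y_{(1)}) = p_i.
\]
Since $\alpha$ respects the $k_I$-bimodule grading, each $x_{(0)}y_{(0)}$ lies in $A_{ij}A_{ji}$, so this exhibits $p_i$ itself as an element of $A_{ij}A_{ji}$. From $p_i \in A_{ij}A_{ji}$ one immediately obtains $A_{ii} = A_{ii}p_i \subseteq A_{ii}A_{ij}A_{ji} \subseteq A_{ij}A_{ji}$, and symmetrically for $A_{jj}$. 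The remaining surjectivity conditions and the appeal to Theorem~\ref{TheoAM} then proceed exactly as you describe. So your overall strategy is right, but the key step requires producing $p_i$ concretely inside $A_{ij}A_{ji}$ via the Reynolds operator, not merely showing the product is nonzero.
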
 

\begin{proof} 
Assume that $A_{ij}\neq 0$. Then there exists an element $x\in A_{ij}$, and by the previous lemma also an element $y\in A_{ji}$ such that $\varphi_i(xy) = 1$. Hence 
\[
\Phi(xy) = x_{(0)}y_{(0)} \varphi(x_{(1)}y_{(1)}) = p_i.
\]
It follows that $p_i \in A_{ij}A_{ji}$. Similarly, $p_j\in A_{ji}A_{ij}$. This implies immediately that $(A_{ij},A_{ji})$ is a surjective and hence strict Morita context between $A_{ii}$ and $A_{jj}$. 
\end{proof} 

The previous corollary makes it meaningful to introduce the following definition.

\begin{Def} 
Let $(A,\alpha)$ be an $I$-Galois object. We say that $i\sim j$ if $A_{ij}\neq0$. We say that $A$ is \emph{connected} if  $i\sim j$ for all $i,j$. 
\end{Def} 

\begin{Prop}\label{PropConnDecom} 
Let $(A,\alpha)$ be an $I$-Galois object. Then $\sim$ is an equivalence relation on $I$, and $A$ can be written as a direct sum of connected comodule algebras 
\[
A = \underset{x\in I/\sim}{\oplus} A_{x}
\] 
with each $A_x$ a connected $I_x$-Galois object (for $I_x$ the orbit corresponding to $x$). 
\end{Prop}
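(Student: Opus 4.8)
The plan is to first check that $\sim$ is an equivalence relation, then use the resulting partition of the index set to split $A$ as a direct sum of $H$-comodule \emph{algebras}, and finally verify that each summand is again a Galois coaction of the required type. Reflexivity of $\sim$ is immediate since $p_i \in A^\alpha \subseteq A$ forces $A_{ii} \neq 0$, so $i \sim i$. Symmetry follows directly from Lemma \ref{FaithfulPhi}: if $A_{ij} \neq 0$ then the non-degeneracy of the pairing $A_{ij} \times A_{ji} \to A_{ii}$ forces $A_{ji} \neq 0$ as well (in fact Corollary \ref{CorEqMorHom} even gives $p_i \in A_{ij}A_{ji}$ and $p_j \in A_{ji}A_{ij}$). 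For transitivity, suppose $i \sim j$ and $j \sim k$; then by Corollary \ref{CorEqMorHom} we have $p_j \in A_{ij}A_{ji}$ and $p_j \in A_{jk}A_{kj}$, and hence $p_i \in A_{ij}A_{ji} = A_{ij}p_jA_{ji} \subseteq A_{ij}A_{jk}A_{kj}A_{ji} \subseteq A_{ik}A_{ki}$, so $A_{ik} \neq 0$ and $i \sim k$.

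Next I would write $I = \bigsqcup_{x \in I/\sim} I_x$ for the decomposition of $I$ into $\sim$-equivalence classes, and set $P_x = \sum_{i \in I_x} p_i$, understood as a (possibly infinite, but locally finite) multiplier idempotent in $M(A^\alpha) \subseteq M(A)$; correspondingly $A_x := P_x A P_x = \bigoplus_{i,j \in I_x} A_{ij}$. The key algebraic point is that $A_{ij}A_{kl} \subseteq \delta_{jk}A_{il}$ (since $p_j p_k = \delta_{jk}p_j$), so if $i \in I_x$ and $k \in I_y$ with $x \neq y$ then $j = k$ would force $j$ to lie in both classes, impossible; hence $A_x A_y = 0$ for $x \neq y$, and since $A = \bigoplus_{ij}A_{ij}$ (stated just before Definition \ref{DefReyn}) we get $A = \bigoplus_{x \in I/\sim} A_x$ as a direct sum of two-sided ideals, each of which is therefore an $H$-comodule algebra with the restricted coaction $\alpha_x$. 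One checks that $A_x^{\alpha_x} = P_x A^\alpha P_x \cong k_{I_x}$, and that $A_x$ is connected by construction: for $i,j \in I_x$ the relation $i \sim j$ holds by definition of the equivalence class.

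The last and only mildly delicate step is to verify that each $(A_x, \alpha_x)$ is itself Galois. Since $A_x = P_x A P_x$ with $P_x$ a full multiplier idempotent for the ideal $A_x^\alpha \cong k_{I_x}$ inside $A^\alpha \cong k_I$ — fullness meaning $A_x^\alpha P_x A_x^\alpha = A_x^\alpha$, which is clear — this is essentially an instance of Lemma \ref{LemEqMor}, except that there $p$ was a full idempotent for all of $A^\alpha$ rather than a "block projection". The cleanest route is to observe that because $A = \bigoplus_x A_x$ is a direct sum of ideals with $A^\alpha = \bigoplus_x A_x^\alpha$ matching the decomposition, the canonical map $\can \colon A \otimes_{A^\alpha} A \to A \otimes H$ decomposes as a direct sum over $x$ of the maps $\can_x \colon A_x \otimes_{A_x^\alpha} A_x \to A_x \otimes H$ (using that $A_{ij} \otimes_{A^\alpha} A_{kl}$ vanishes unless $i,j,k,l$ lie in a common class, since the tensor leg $p_j = p_j p_k \cdot(\ldots)$ ties $j$ and $k$ together); bijectivity of $\can$ then forces bijectivity of each $\can_x$. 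I expect this bookkeeping with the tensor product over the non-unital algebra $A^\alpha = k_I$ to be the main obstacle — one must be careful that $A \otimes_{k_I} A = \bigoplus_i p_i A \otimes_k A p_i$ behaves as expected, which follows from $k_I$ having local units (Lemma \ref{LemFirm}) so that all the modules in sight are firm. Alternatively, one simply invokes Lemma \ref{LemEqMor} with $p = P_x$ after noting $P_x$ is a full idempotent for the ideal it generates, giving that $\beta = \alpha_{\mid A_x}$ is Galois directly.
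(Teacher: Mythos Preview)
Your argument is correct and matches the paper's approach: the paper proves reflexivity, symmetry (via Corollary \ref{CorEqMorHom}), and transitivity by the same chain $p_k \in A_{kk} = A_{kj}A_{ji}A_{ij}A_{jk} \subseteq A_{ki}A_{ik}$ (your version with $i$ and $k$ swapped), then declares the remainder ``straightforward'' --- you have simply spelled out that remainder, and your direct-sum decomposition of $\can$ is the natural way to do it. One small slip: you wrote ``$p_j \in A_{ij}A_{ji}$'', but $A_{ij}A_{ji} \subseteq A_{ii}$, so this should read $p_i \in A_{ij}A_{ji}$ (or $p_j \in A_{ji}A_{ij}$); the chain you use immediately afterwards is stated correctly, so this is only a typo.
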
 
\begin{proof} 
Clearly $\sim$ is reflexive, and it is symmetric by Corollary \ref{CorEqMorHom}. Transitivity is also clear since if $i\sim j$ and $j\sim k$ then 
\[
p_k \in A_{kk} = A_{kj}A_{ji}A_{ij}A_{jk}  \subseteq A_{ki}A_{ik}.
\] 
The remainder of the proof is now straightforward.
\end{proof} 

We can hence restrict our focus to connected $I$-Galois objects. Note that we have in this case that for each $i,j,k$ 
\[
A_{ik} = A_{ij}A_{ji}A_{ik} \subseteq A_{ij}A_{jk}  \subseteq A_{ik},
\] 
proving the equality $A_{ij}A_{jk} = A_{ik}$. 

Let us now prepare for our second main theorem. For $(A,\alpha)$ an $I$-Galois object, let us write 
\[
\widetilde{M}_j = \oplus_i A_{ij},\qquad \widetilde{N}_i = \oplus_{j} A_{ij}
\] 
which are respectively equivariant  left and right  unital $A$-modules. 

\begin{Lem}\label{LemMax} 
Let $(A,\alpha)$ be a connected $I$-Galois object. Then each $A_{ii}$ is equivariantly Morita equivalent to $A$. Moreover,
\begin{itemize}
\item the collection $\{\widetilde{M}_j\mid j\in I\}$ is a maximal collection of pairwise non-isomorphic equivariantly simple left $A$-modules, 
\item for each fixed $i$,  the collection $\{A_{ij}\mid j\in I\}$ is a maximal collection of pairwise non-isomorphic equivariantly  simple left $A_{ii}$-modules,
\item the collection $\{\widetilde{N}_i\mid i\in I\}$ is a maximal collection of pairwise non-isomorphic equivariantly simple right $A$-modules, 
\item for each fixed $j$,  the collection $\{A_{ij}\mid i\in I\}$ is a maximal collection of pairwise non-isomorphic equivariantly simple right $A_{jj}$-modules.
\end{itemize}
\end{Lem}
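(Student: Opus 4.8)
The plan is to reduce everything to the Morita theory already developed for algebras with local units, using the direct sum decomposition $A = \oplus_{ij}A_{ij}$ together with the strictness of the Morita contexts $(A_{ij},A_{ji})$ obtained in Corollary \ref{CorEqMorHom} (which applies since $A$ is connected, so all $A_{ij}\neq 0$). First I would establish the equivariant Morita equivalence of $A_{ii}$ with $A$: using the already-noted identity $A_{ij}A_{jk} = A_{ik}$ valid in the connected case, the pair $(\widetilde{M}_i,\widetilde{N}_i) = (\oplus_j A_{ji},\oplus_j A_{ij})$ forms a surjective equivariant Morita context between $A$ and $A_{ii}$, since $\widetilde{N}_i\widetilde{M}_i = \oplus_{jk}A_{ij}A_{ki}$ hits all of $A_{ii}$ (indeed $A_{ii}=A_{ij}A_{ji}$), and $\widetilde{M}_i\widetilde{N}_i = \oplus_{jk}A_{ji}A_{ik}$ hits all of $A$. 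By Theorem \ref{TheoAM} surjectivity gives strictness, and one checks directly that the multiplication maps are $H$-comodule morphisms, so this is an equivariant strict Morita context; hence $A_{ii}$ is equivariantly Morita equivalent to $A$.

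Next I would identify the simple modules. Under the equivalence of unital $H$-equivariant $A$-modules with unital $A\#\hat{H}$-modules (Proposition \ref{PropCatEqDi}), and using Proposition \ref{PropExLoc} together with Theorem \ref{TheoEqMor}, the algebra $A\#\hat{H}$ is absolutely semisimple, Morita equivalent to $k_I$; so Corollary \ref{CorMorContSem} tells us the equivariantly simple $A$-modules are indexed by $I$ and, more precisely, that the bimodule implementing the Morita equivalence decomposes as a direct sum of the simples. The concrete Morita equivalence of $A\#\hat{H}$ with $A^\alpha = k_I$ from \cite[Theorem 4.4]{VDZ99b} is implemented (up to the smash-product dictionary) by the module $A$ itself, graded by the $p_j$; tracing through, the $j$-th simple equivariant left $A$-module is exactly $Ap_j = \oplus_i A_{ij} = \widetilde{M}_j$. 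Thus $\{\widetilde{M}_j\}_{j\in I}$ is a maximal collection of pairwise non-isomorphic equivariantly simple left $A$-modules. For $A_{ii}$, which is unital and homogeneous, the same machinery (now $A_{ii}\#\hat{H}$ is Morita equivalent to $(A_{ii})^\alpha = kp_i$, i.e.\ to $k$ itself in the connected case — so $A_{ii}$ is equivariantly absolutely simple as a module over itself in the naive sense but has a family of simples indexed by $I$ once we use its Morita equivalence with $A$) shows that $p_i\cdot\widetilde{M}_j = A_{ij}$ is the $j$-th simple left $A_{ii}$-module: under the equivalence of $A$-modules and $A_{ii}$-modules coming from the strict context $(\widetilde{M}_i,\widetilde{N}_i)$, the module $\widetilde{M}_j$ maps to $\widetilde{N}_i\otimes_A \widetilde{M}_j \cong A_{ij}$, so simples go to simples and $\{A_{ij}\mid j\in I\}$ is the required maximal collection.

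The right-module statements are completely symmetric: replace $\widetilde{M}_j$ by $\widetilde{N}_i = p_i A = \oplus_j A_{ij}$, use the right Galois map $\widetilde{\can}$ and the right-handed version of Lemma \ref{FaithfulPhi} (as invoked in its proof and in Proposition \ref{PropExLoc}) in place of the left-handed one, and argue with right $A\#\hat{H}$-modules, or equivalently pass to the opposite algebra $A^{\opp}$ which is again an $I$-Galois object for the appropriate Hopf algebra; then for fixed $j$ the collection $\{A_{ij}\mid i\in I\}$ is a maximal collection of pairwise non-isomorphic equivariantly simple right $A_{jj}$-modules.

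The main obstacle I anticipate is not any single computation but making precise the dictionary between the abstract statement ``$A\#\hat{H}$ is Morita equivalent to $k_I$'' from \cite[Theorem 4.4]{VDZ99b} and the concrete grading $A = \oplus_{ij}A_{ij}$, so as to pin down which simple module is $\widetilde{M}_j$ rather than merely that there are $|I|$ of them; this requires unwinding that the Morita bimodule there is $A$ with its $A^\alpha$-bimodule structure, and checking that the equivalence respects the $H$-comodule structure so that ``simple'' can be replaced by ``equivariantly simple'' throughout. Once that identification is in hand, maximality and pairwise non-isomorphism follow from Lemma \ref{LemAbsEnk} and Corollary \ref{CorMorContSem}, and the passage between $A$-modules and $A_{ii}$-modules is the routine strict-Morita-context transport, so I would keep those parts brief.
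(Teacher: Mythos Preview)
Your overall strategy is sound and closely parallels the paper's: both use that each $p_i$ is full (equivalently, that $(\widetilde{M}_i,\widetilde{N}_i)$ is a surjective, hence strict, equivariant Morita context) to get $A_{ii}\sim A$, and both invoke the Morita equivalence $A\#\hat{H}\sim k_I$ from \cite[Theorem~4.4]{VDZ99b} together with Corollary~\ref{CorMorContSem} for the maximality claim. The main difference is in how simplicity and pairwise non-isomorphism of the $\widetilde{M}_j$ (equivalently the $A_{ij}$) are established. You deduce these directly from the Morita transport $k_I\leadsto A\#\hat{H}$: the simple $k_I$-module $kp_j$ is sent to $A\otimes_{k_I} kp_j\cong Ap_j=\widetilde{M}_j$, and Morita equivalence preserves and reflects simples and isomorphism classes. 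The paper instead gives a hands-on argument: it writes $A_{ij}$ as a sum of equivariant simples, identifies $A_{jj}^{\opp}$ with the equivariant endomorphism ring of $A_{ij}$ via the strict context $(A_{ij},A_{ji})$, and observes that any projection onto a summand, or any intertwiner $A_{ij}\to A_{ik}$, is then right multiplication by a \emph{coinvariant} element of $A_{jj}$ or $A_{jk}$, forcing it to be scalar (resp.\ zero unless $j=k$). Your route is cleaner once the bimodule in \cite[Theorem~4.4]{VDZ99b} is identified as $A$ with its natural $(A\#\hat{H},k_I)$-structure; the paper's route is more self-contained and makes the role of homogeneity of $A_{jj}$ explicit.

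One genuine slip to fix: your parenthetical ``now $A_{ii}\#\hat{H}$ is Morita equivalent to $(A_{ii})^\alpha=kp_i$'' appeals to \cite[Theorem~4.4]{VDZ99b}, but that theorem requires the coaction to be \emph{Galois}, and $(A_{ii},\alpha_{|A_{ii}})$ is homogeneous, not Galois in general (indeed if it were, there would be only one equivariant simple, contradicting the conclusion). The correct statement is that $A_{ii}\#\hat{H}$ is Morita equivalent to $A\#\hat{H}$ (via the equivariant Morita equivalence $A_{ii}\sim A$), hence to $k_I$; you then recover the $A_{ii}$-simples by transporting the $\widetilde{M}_j$, exactly as you do in the next clause. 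So drop the parenthetical and go straight to the transport argument $\widetilde{M}_j\mapsto p_i\widetilde{M}_j=A_{ij}$, which is correct.
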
 

\begin{proof} 
By  the remark following Proposition \ref{PropConnDecom}, each idempotent $p_i \in A^{\alpha}$ is full in $A$, setting up an equivariant Morita equivalence between $(A,\alpha)$ and $(A_{ii},\alpha)$. In fact, if $M$ is an equivariant $A$-module, then $p_iM \cong p_iA \underset{A}{\otimes} M$ is the associated equivariant $A_{ii}$-module. In particular, $\widetilde{M}_j$ corresponds to $A_{ij}$ under this equivariant Morita equivalence. 

To show that $\widetilde{M}_j$ is equivariantly simple as an $A$-module, it is hence sufficient to show that $A_{ij}$ is equivariantly simple as a left $A_{ii}$-module. Now as $A_{ii}$ is equivariantly Morita equivalent to $A$, it follows from Theorem \ref{TheoEqMor} that $A_{ii}$ is equivariantly absolutely semisimple. Hence $A_{ij} = \oplus_r M_r$ is a direct sum of equivariantly simple $A_{ii}$-modules $M_r$. However, since $A_{ij}$ provides a strict Morita context between $A_{ii}$ and $A_{jj}$, we can identify $A_{jj}$ with the opposite algebra of $\End_{A_{ii}}(A_{ij})$ by right multiplication. Let $p\in A_{jj}$ correspond to a projection on a factor $M_r$, 
\[
A_{ij} \rightarrow M_{r} \subseteq A_{ij},\quad x \mapsto xp.
\]
Then from 
\[
\alpha(xp) = \alpha(x)(p\otimes 1),\qquad \forall x\in A_{ij},
\]
together with $A_{ji}A_{ij} = A_{jj}$, it follows that $p$ is a coinvariant element in $A_{jj}$. As the space of coinvariant elements in $A_{jj}$ is one-dimensional, it follows that $A_{ij}$ itself must already be equivariantly simple.

Similarly, the $\widetilde{M}_j$ are mutually non-isomorphic if we can show that the $A_{ij}$, for $j$ varying, are mutually non-isomorphic. However, by the same argument as above, any isomorphism of equivariant left $A_{ii}$-modules between $A_{ij}$ and $A_{ik}$ must be implemented by right multiplication with a coinvariant element in $A_{jk}$, which can only exist if $j= k$.

To show that $\{A_{ij}\}$ is a maximal collection of equivariantly simple left $A_{ii}$-modules, it suffices to show that $\{\widetilde{M}_j\}$ is a maximal collection of equivariantly simple $A$-modules, or equivalently, of simple $A\#\hat{H}$-modules. Now by \cite[Theorem 4.4]{VDZ99b}, we can turn the vector space $A$ into a strict Morita context between $A\#\hat{H}$ and $k_{I}$, with $A\#\hat{H}$ acting in the natural way. Hence each simple $A\#\hat{H}$-module appears as a direct summand of $A$ by Corollary \ref{CorMorContSem}. By the above however, we have already shown that $A \cong \oplus_j \widetilde{M}_j$ is the direct sum decomposition of $A$ into equivariant simples. 

The statements concerning the equivariant right modules follow by symmetry. 
\end{proof} 

We are now ready to state our second main theorem.

\begin{Theorem} 
Let $(H,\Delta)$ be a Hopf algebra with invariant functionals. Then the assignment 
\[
[(A,\alpha)] \mapsto [(A_{ii},\alpha_{\mid A_{ii}})]
\] 
sets up a one-to-one correspondence between
\begin{itemize}
\item isomorphism classes of comodule algebras $(A,\alpha)$ which are connected $I$-Galois for some set $I$, and 
\item equivariant Morita equivalence classes of homogeneous $H$-equivariantly absolutely semisimple comodule algebras $(C,\gamma)$.
\end{itemize}
\end{Theorem}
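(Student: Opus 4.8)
Write $\Psi$ for the assignment $[(A,\alpha)]\mapsto[(A_{i_0i_0},\alpha_{|A_{i_0i_0}})]$ with $i_0\in I$ arbitrary. I would prove in turn that $\Psi$ is (i) well defined, (ii) surjective and (iii) injective, with (iii) carrying essentially all the weight. \emph{Well-definedness.} Fix a connected $I$-Galois object $(A,\alpha)$ and $i_0\in I$, and put $C:=A_{i_0i_0}$. We have already seen that $C$ is a homogeneous comodule algebra: $C$ is unital with unit $p_{i_0}$ and $C^{\alpha}=p_{i_0}A^{\alpha}p_{i_0}=kp_{i_0}$. By the remark following Proposition~\ref{PropConnDecom} each $p_j$ is a full idempotent of $A$, so Lemma~\ref{LemMax} gives that $C$ is equivariantly Morita equivalent to $A$, and applying this for two indices shows $[C]$ is independent of $i_0$. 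Moreover $A$ is equivariantly absolutely semisimple by Proposition~\ref{PropExLoc}, and an equivariant Morita equivalence induces, via Proposition~\ref{PropCatEqDi}, a Morita equivalence $A\#\hat{H}\sim C\#\hat{H}$, so $C$ is equivariantly absolutely semisimple by Theorem~\ref{TheoEqMor}. Finally, an isomorphism $(A,\alpha)\xrightarrow{\sim}(A',\alpha')$ of comodule algebras restricts to an isomorphism $k_{I}=A^{\alpha}\xrightarrow{\sim}(A')^{\alpha'}=k_{I'}$, hence permutes minimal idempotents and carries $A_{i_0i_0}$ isomorphically onto some $A'_{i'_0i'_0}$; thus $\Psi$ is well defined.

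\emph{Surjectivity.} Let $(C,\gamma)$ be homogeneous and $H$-equivariantly absolutely semisimple. Being unital, $C$ has $H$-coinvariant local units, so Theorem~\ref{TheoMain} produces a Galois coaction $(B,\beta)$ with $B^{\beta}\cong k_{I}$ — that is, an $I$-Galois object — equivariantly Morita equivalent to $(C,\gamma)$. Homogeneity forces $(C,\gamma)$ to be indecomposable as a comodule algebra (a decomposition $C=C_1\oplus C_2$ into comodule subalgebras produces coinvariant idempotents $1_{C_1},1_{C_2}\in C^{\gamma}=k$), and a direct-sum decomposition of $B$ transports through the strict Morita context to one of $C$, exactly as in the proof of Proposition~\ref{PropConnDecom}; hence $B$ is indecomposable, so connected by Proposition~\ref{PropConnDecom}. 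Since $B_{i_0i_0}$ is equivariantly Morita equivalent to $B$, hence to $C$, we obtain $\Psi([B])=[B_{i_0i_0}]=[C]$.

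\emph{Injectivity.} Here I would reconstruct a connected $I$-Galois object, up to isomorphism, from one of its corners. Fix $(A,\alpha)$ connected $I$-Galois and $C:=A_{i_0i_0}$. By Lemma~\ref{LemMax}, $\{M_j:=A_{i_0j}\mid j\in I\}$ is a maximal family of pairwise non-isomorphic equivariantly simple left $C$-modules, with $M_{i_0}=C$. Using connectedness ($A_{kl}=A_{ki_0}A_{i_0l}$) and strictness of the Morita contexts $(A_{i_0k},A_{ki_0})$ one identifies $A_{ki_0}\cong\Hom_{C}(M_k,C)$ and then $A_{kl}\cong\Hom_{C}(M_k,M_l)$ as $A_{kk}$--$A_{ll}$--bimodules, the multiplication of $A=\bigoplus_{k,l}A_{kl}$ corresponding to composition. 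To see that the coactions match one must carry the $H$-comodule structures along: the natural framework is that $\mathrm{Mod}^{\mathrm{eq}}(C)\simeq\mathrm{Mod}(C\#\hat{H})$ (Proposition~\ref{PropCatEqDi}) is a $\mathrm{Comod}(H)$-module category, and the comodule structure on $A_{kl}$ is that of the internal hom $\underline{\Hom}(M_k,M_l)$. Thus $(A,\alpha)$ is isomorphic, as a comodule algebra, to the ``linking comodule algebra'' $\mathcal{L}(C):=\bigoplus_{k,l}\underline{\Hom}(M_k,M_l)$ built functorially from $C$. A strict equivariant Morita context between homogeneous comodule algebras $C$ and $C'$ induces an equivalence $\mathrm{Mod}^{\mathrm{eq}}(C)\simeq\mathrm{Mod}^{\mathrm{eq}}(C')$ of $\mathrm{Comod}(H)$-module categories — the Eilenberg--Watts functor $N\otimes_{C}(-)$, the $\mathrm{Comod}(H)$-linearity being precisely compatibility with the smash-product actions — which matches the families of simple modules and their internal homs, so $\mathcal{L}(C)\cong\mathcal{L}(C')$ as comodule algebras. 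Hence if $[A_{i_0i_0}]=[A'_{i'_0i'_0}]$ then $A\cong\mathcal{L}(A_{i_0i_0})\cong\mathcal{L}(A'_{i'_0i'_0})\cong A'$, giving injectivity.

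\emph{Main obstacle.} Steps (i) and (ii) and the bijection bookkeeping are just assembly of results already in the paper. The real difficulty is in (iii): rigorously setting up the $\mathrm{Comod}(H)$-module-category structure on $\mathrm{Mod}^{\mathrm{eq}}(C)$ together with its internal homs — for infinite-dimensional $H$ one cannot naively equip a $\Hom$-space with a comodule structure, and this is exactly the subtlety flagged in the remark after the definition of $H$-equivariant Morita equivalence — then verifying the identification $A_{kl}\cong\underline{\Hom}(M_k,M_l)$ of comodule bimodules along with the multiplication, and checking that this reconstruction is natural enough to be transported along an equivariant Morita equivalence.
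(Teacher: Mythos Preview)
Your outline matches the paper's: well-definedness and surjectivity are assembled from the same ingredients (Lemma~\ref{LemMax}, Corollary~\ref{CorEqMorHom}, Proposition~\ref{PropExLoc}, Theorem~\ref{TheoMain}, Proposition~\ref{PropConnDecom}), and injectivity proceeds by reconstructing $A$ from its corner $C$ as the direct sum of morphism spaces between the equivariant simple $C$-modules. The one substantive difference is in how you handle the comodule structure on those morphism spaces.

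You frame the reconstruction via internal homs $\underline{\Hom}(M_k,M_l)$ in the $\mathrm{Comod}(H)$-module category $\mathrm{Mod}^{\mathrm{eq}}(C)$, and you rightly flag the existence of such internal homs for infinite-dimensional $H$ as the main obstacle. The paper bypasses this machinery entirely. Working with \emph{right} $C$-modules $M_j=A_{ji}$, it uses the left-multiplication isomorphism $A_{kj}\xrightarrow{\sim}B_{kj}:=\Mor_C(M_j,M_k)$ to \emph{transport} the comodule structure from $A_{kj}$, and then simply observes that the resulting $\beta_{kj}$ is \emph{uniquely determined} by the equivariance identity
\[
\beta_{kj}(x)\,\beta_j(m)=\beta_k(xm),\qquad x\in B_{kj},\ m\in M_j.
\]
Existence thus comes for free (from $A$), and the uniqueness clause is what makes the reconstruction depend only on the data $\{(M_j,\beta_j)\}$ of equivariant simple $C$-modules --- hence only on $(C,\gamma)$. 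Your ``main obstacle'' therefore evaporates: no abstract internal-hom object needs to be built, just this one-line characterization. What your more categorical phrasing buys, on the other hand, is an explicit account of why the reconstruction is invariant under equivariant Morita equivalence of $C$; the paper leaves this implicit in the sentence ``$(A,\alpha)$ can be constructed from the homogeneous comodule algebra $(C,\gamma)$'', relying on the fact that the construction visibly uses only the equivariant module category of $C$.
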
 
\begin{proof} 
First of all, if $(A,\alpha)$ is a connected $I$-Galois object, we have already shown in the course of Lemma \ref{LemMax} that $A_{ii}$ is indeed homogeneous and $H$-equivariantly absolutely semisimple. By Corollary \ref{CorEqMorHom}, the $A_{ii}$ do not depend on $i$ up to equivariant Morita equivalence. This shows that the correspondence in the theorem is well-defined. We are left to show that the correspondence is bijective.

Let $(C,\gamma)$ be a homogeneous, $H$-equivariantly absolutely semisimple comodule algebra. By Theorem \ref{TheoMain}, $(C,\gamma)$ is equivariantly Morita equivalent to an $I$-Galois object $(A,\alpha)$ for some set $I$. Let $A =\oplus_{I_x\in I/\sim} A_x$ be the direct sum decomposition of $A$ into connected $I_x$-Galois objects $A_x$. Choosing in each $I_x$ a fixed element $i_x\in I_x$, the reasoning in the first paragraph gives that $(A,\alpha)$, and hence $(C,\gamma)$, is equivariantly Morita equivalent to $\oplus_x A_{i_x,i_x}$. However, this implies that $C \cong \oplus_x C_{x}$ as a direct sum of comodule algebras. As $C^{\gamma} = k1$, only one component can appear. It follows that $I/\sim$ is a singleton, and hence $(A,\alpha)$ is connected. This proves surjectivity of the correspondence. 

To see that the correspondence is injective, let $A$ be a connected $I$-Galois object, and fix $i\in I$. Write $(C,\gamma) = (A_{ii},\alpha_{\mid A_{ii}})$. Write $(M_j,\beta_j) = (A_{ji},\alpha_{\mid A_{ji}})$. Then by Lemma \ref{LemMax} the $\{M_j\}_{j\in I}$ form a maximal collection of mutually non-isomorphic simple $H$-equivariant right $C$-modules. Moreover, putting 
\[
B_{kj} = \Mor_C(M_j,M_k),
\]
we have a natural vector space isomorphism
\[
A_{kj} \cong B_{kj}
\]
by left multiplication, as the $A_{kj}$ are strict Morita contexts. This sets up an isomorphism $A \cong \oplus_{kj} B_{kj}$ of algebras. The ensuing comodule structures $\beta_{kj}$ on the $B_{kj}$ are moreover uniquely determined by the condition 
\[
\beta_{kj}(x)\beta_j(m)  =\beta_{k}(xm),\qquad x\in B_{kj},m\in M_j.
\]
It follows that up to isomorphism, $(A,\alpha)$ can be constructed from the homogeneous comodule algebra $(C,\gamma)$, proving injectivity of our correspondence. 
\end{proof}

To end, let us present a criterion for absolute equivariant semisimplicity. Note first that if $(A,\alpha)$ is a comodule algebra for $(H,\Delta)$, and $(V,\delta)$ a comodule for $(H,\Delta)$, then we can form the equivariant comodule algebra $A\otimes V$ upon which $A$ acts by multiplication in the first tensorand, and $(H,\Delta)$ coacts by the tensor product comodule structure. 

\begin{Lem}
Let $(H,\Delta)$ be a Hopf algebra, and let $(A,\alpha)$ be a unital algebra. Then a coaction $(A,\alpha)$ is absolutely equivariantly semisimple if and only if the $H$-equivariant $A$-modules $A\otimes V$ are  equivariantly absolutely semisimple for all finite-dimensional comodules $(V,\delta)$.
\end{Lem}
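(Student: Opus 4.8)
The plan is to handle the two implications separately, the forward one being immediate and the converse being the real content. For the forward implication: if $(A,\alpha)$ is equivariantly absolutely semisimple then, since $A$ is unital, every $A\otimes V$ is a unital $H$-equivariant $A$-module, hence equivariantly absolutely semisimple by definition. So assume conversely that $A\otimes V$ is equivariantly absolutely semisimple for every finite-dimensional comodule $(V,\delta)$, and let $(M,\delta_M)$ be an arbitrary $H$-equivariant $A$-module (automatically unital, $A$ being unital). The goal is to exhibit $M$ as an equivariant quotient of a direct sum of modules of the form $A\otimes V$ with $V$ finite-dimensional, and then to conclude by a soft argument.

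The key observation is that for \emph{any} $H$-subcomodule $W\subseteq M$, the multiplication map $A\otimes W\to M$, $a\otimes w\mapsto aw$, is not only $A$-linear but also $H$-colinear: applying the tensor-product coaction $(a\otimes w)\mapsto a_{(0)}\otimes w_{(0)}\otimes a_{(1)}w_{(1)}$ on $A\otimes W$ and then multiplying, versus first multiplying and then applying $\delta_M$, one gets the same thing precisely because of the defining compatibility $\delta_M(aw)=\alpha(a)\delta_M(w)=a_{(0)}w_{(0)}\otimes a_{(1)}w_{(1)}$ of the equivariant module $M$. Its image is the $A$-submodule $AW$, which contains $W$ since $A$ is unital. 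Now I invoke the fundamental theorem of comodules: every element of $M$ lies in a finite-dimensional subcomodule. Letting $W_m\subseteq M$ denote, for each $m\in M$, the finite-dimensional subcomodule generated by $m$, the maps $A\otimes W_m\to M$ assemble into an $H$-equivariant $A$-linear map $\bigoplus_{m\in M}(A\otimes W_m)\to M$ whose image contains every $m$, hence is surjective.

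To finish: each $A\otimes W_m$ is equivariantly absolutely semisimple by hypothesis, and an arbitrary direct sum of equivariantly absolutely semisimple modules is again equivariantly absolutely semisimple (immediately from the definition, which allows infinite direct sums). Thus $M$ is an equivariant quotient of an equivariantly absolutely semisimple module, and it remains to note that such a quotient is again equivariantly absolutely semisimple: after any base change $k\subseteq K$ it is a quotient of a semisimple $H_K$-equivariant $A_K$-module, hence semisimple, and — since in the abelian category of equivariant modules every submodule of a semisimple object is a direct summand — $M$ is isomorphic to a direct summand of $\bigoplus_{m\in M}(A\otimes W_m)$ and therefore decomposes as a direct sum of modules each isomorphic to an equivariantly absolutely simple summand of some $A\otimes W_m$. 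As $M$ was arbitrary, $(A,\alpha)$ is equivariantly absolutely semisimple. The only genuinely non-formal ingredients are the colinearity of the multiplication map (which is a one-line check) and the local finiteness of comodules; the step that requires a little care — but no real obstacle — is the last one, namely making sure the \emph{absolute} qualifier is carried through the quotient, which is a routine application of the semisimple-module formalism transported to the equivariant setting. Note that no assumption on $H$ beyond being a Hopf algebra is used.
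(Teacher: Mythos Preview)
Your proof is correct and follows essentially the same route as the paper's: both use local finiteness of comodules to cover $M$ by images of the equivariant multiplication maps $A\otimes V\to M$, and then invoke that equivariant absolute semisimplicity passes to quotients (equivalently, to sums). The only cosmetic difference is that the paper first observes that being a not-necessarily-direct sum of equivariantly absolutely simple submodules already implies equivariant absolute semisimplicity, thereby reducing to the case $M=AV$ for a single finite-dimensional $V$, whereas you assemble one large surjection from $\bigoplus_m(A\otimes W_m)$ and split it; these are equivalent packagings of the same argument.
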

The unitality of $(A,\alpha)$ can be relaxed, but we will not need this generalization for our application. 
\begin{proof}
As in the non-equivariant case, it is not hard to see that an $H$-equivariant module is equivariantly absolutely semisimple if and only if it is a (not necessarily direct) sum of equivariantly absolutely simple modules. As any $(H,\Delta)$-comodule is a sum of finite-dimensional comodules, it is hence sufficient to show equivariant absolute semisimplicity of any $H$-equivariant $A$-module $W$ which is generated, as an $A$-module, by a finite-dimensional $H$-comodule $V \subseteq W$. However, in this case we obtain $W$ as a quotient of $A\otimes V$ by 
\[
A \otimes V \rightarrow W,\quad a\otimes v \mapsto av.
\]
Since equivariant absolute semisimplicity of equivariant modules passes to quotients, this proves the lemma. 
\end{proof}

Assume now that $(H,\Delta)$ is a Hopf $*$-algebra (over the ground field $k=\C$) with a \emph{positive} invariant integral \cite{VD94}, so
\[
\varphi(h^*h) \geq 0,\qquad \forall h\in H.
\]
Assume that $A$ is a unital $*$-algebra with a homogeneous ($*$-preserving) coaction $\alpha$. Then by homogeneity there exists a unique functional $\varphi_A$ on $A$ such that
\[
\varphi_A(a)1_A = (\id\otimes \varphi)\alpha(a),\qquad \forall a\in A.
\]
Let us say that $\varphi_A$ is positive if $\varphi_A(a^*a)\geq 0$ for all $a\in A$.

\begin{Prop}
Let $(H,\Delta)$ be a Hopf $*$-algebra (over $\C$) with a positive invariant integral $\varphi$. Let $(A,\alpha)$ be a homogeneous coaction on a unital $*$-algebra $A$ such that $\varphi_A$ is positive. Then $(A,\alpha)$ is absolutely equivariantly semisimple.
\end{Prop}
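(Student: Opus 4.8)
The plan is to reduce, via the previous lemma, to proving that $A\otimes V$ is equivariantly absolutely semisimple for each finite-dimensional comodule $(V,\delta)$, and to obtain this by a Maschke-type orthogonality argument built from $\varphi$ and $\varphi_A$.

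First, since $\varphi$ is an invariant integral, $(H,\Delta)$ is cosemisimple, so every comodule $X$ decomposes as $X=\bigoplus_{U\in\operatorname{Irr}(H)}X_U$ into isotypic components, and the invariance of $\varphi$ forces $\varphi$ to vanish on the coefficient space $C(U)\subseteq H$ of every non-trivial irreducible corepresentation $U$; moreover, positivity of $\varphi$ guarantees that every finite-dimensional comodule admits an invariant inner product. I would next import the two facts that genuinely use positivity of $\varphi_A$ together with ergodicity $A^{\alpha}=k1$, from the theory of ergodic actions of compact quantum groups (e.g.\ \cite{Was89,Boc95}; the arguments transpose to the present purely algebraic setting): (i) $\varphi_A$ is \emph{faithful}, i.e.\ $\varphi_A(a^*a)=0$ implies $a=0$, and (ii) each isotypic component $A_U$ is \emph{finite-dimensional}. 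I expect the verification of (i) and (ii) to be the main obstacle; (ii) ultimately relies on the finiteness of the quantum dimensions of the corepresentations of $H$, which is where positivity of $\varphi$ is indispensable.

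Granting (i) and (ii), equip $A$ with $\langle a,b\rangle=\varphi_A(a^*b)$: by (i) it is positive definite, and clearly $\langle ax,y\rangle=\langle x,a^*y\rangle$. Applying $\operatorname{id}\otimes\varphi$ to $\alpha(a^*b)=\alpha(a)^*\alpha(b)$ (valid because $\alpha$ is a $*$-homomorphism), and using that $\alpha(A_U)\subseteq A\otimes C(U)$ and that $\varphi$ kills non-trivial coefficients, one finds $\langle a,b\rangle=0$ whenever $a,b$ lie in non-isomorphic isotypic components of $A$. Fixing an invariant inner product on $V$, put on $A\otimes V$ (with $A$ acting on the first leg) the tensor product inner product: it is positive definite, satisfies $\langle a\xi,\eta\rangle=\langle\xi,a^*\eta\rangle$ for $a\in A$, and the isotypic components $X_U=(A\otimes V)_U$ are mutually orthogonal; by (ii) each $X_U$ is finite-dimensional.

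Now let $W\subseteq A\otimes V$ be an $H$-equivariant $A$-submodule. Being a subcomodule, $W=\bigoplus_U(W\cap X_U)$, and by orthogonality $W^{\perp}=\bigoplus_U\big(W^{\perp}\cap X_U\big)$; since the form is definite on each finite-dimensional $X_U$, we get $X_U=(W\cap X_U)\oplus(W^{\perp}\cap X_U)$ and hence $A\otimes V=W\oplus W^{\perp}$, with $W^{\perp}$ again equivariant (it is a sum of isotypic pieces, and $\langle a\xi,w\rangle=\langle\xi,a^*w\rangle=0$ for $\xi\in W^{\perp}$, $w\in W$, $a\in A$). Thus every equivariant submodule of $A\otimes V$ is a direct summand, so $A\otimes V$ is a direct sum of equivariantly simple submodules $S$. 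For such an $S$ the isotypic decomposition $S=\bigoplus_U S_U$ again has finite-dimensional pieces, and any equivariant endomorphism $T$ of $S$ respects it; picking $U$ with $S_U\neq 0$, the operator $T|_{S_U}$ has an eigenvalue $\lambda\in\C$, so $T-\lambda$ has non-zero kernel and therefore, $S$ being simple, $T=\lambda\operatorname{id}_S$. Translating to $A\#\hat H$-modules via Proposition \ref{PropCatEqDi} and applying Lemma \ref{LemAbsEnk}(3) (note $A\#\hat H$ has local units by Lemma \ref{LemLocUnSmash}), each $S$ is equivariantly absolutely simple; hence $A\otimes V$ is equivariantly absolutely semisimple, and the previous lemma finishes the proof.
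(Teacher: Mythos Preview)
Your proposal is correct and follows essentially the same route as the paper's own (sketched) proof: reduce via the preceding lemma to the modules $A\otimes V$, endow $A$ with the pre-Hilbert structure $\langle a,b\rangle=\varphi_A(a^*b)$ and $V$ with an invariant inner product, invoke Boca's result that the isotypic components are finite-dimensional and orthogonal, and conclude by taking orthogonal complements. The only noteworthy difference is cosmetic: the paper dispatches the passage from ``semisimple'' to ``absolutely semisimple'' with the one-liner ``$\C$ is algebraically closed'', whereas you spell this out via the eigenvalue argument on a finite-dimensional isotypic piece and Lemma~\ref{LemAbsEnk}---which is in fact the content behind the paper's remark, since the simple equivariant modules in question are not a priori finite-dimensional.
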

\begin{proof}
Let us give a brief sketch of the proof, see also \cite[Section 6]{DC17}. 

As $\C$ is algebraically closed, it is sufficient to show equivariant semisimplicity. By the previous lemma, it suffices to show equivariant semisimplicity of the $A\otimes V$ for $(V,\delta)$ a finite-dimensional comodule. Now $A$ can be endowed with a pre-Hilbert space structure by 
\[
\langle a,b\rangle = \varphi_A(a^*b).
\]
On the other hand, also $V$ can be endowed with a Hilbert space structure which is preserved by $\delta$. By \cite{Boc95}, we know that  the isotypical components of $A$ are finite-dimensional and orthogonal \cite{Boc95}. Hence the same holds for $A\otimes V$. It follows immediately that each equivariant submodule of $A\otimes V$ must be complemented canonically by its orthogonal complement, proving equivariant semisimplicity of $A\otimes V$.
\end{proof}

Note that in the setting of C$^*$-algebras, equivariant semisimplicity for homogeneous actions of compact quantum groups was shown first in \cite{Ver02}.

\section{$I$-Galois objects for Hopf algebras with invariant functionals}

In this final section, we consider in some more detail the notion of $I$-Galois object, complementing results from \cite{DC09}. 

Fix $(H,\Delta)$ a Hopf algebra with invariant functionals $\varphi,\psi = \varphi \circ S$. We use the notation introduced in Proposition \ref{PropHopfInv}. We also fix an $I$-Galois object $(A,\alpha)$. We write $p_i$ for the minimal idempotents in $A^{\alpha}\cong k_I$. We write $\splitting$ for the \emph{splitting map}
\begin{equation}\label{EqSplitting}
\splitting: A \otimes_{k_I}A \rightarrow A\otimes A,\quad a\otimes a'\mapsto \sum_i ap_i \otimes p_ia',
\end{equation}
where the final expression in \eqref{EqSplitting} is in fact a finite sum by the existence of coinvariant local units, see in particular \eqref{EqLocalUnit}. Note that $\splitting$ sets up a linear isomorphism with the subspace $\oplus_{i,j,k} (A_{ij}\otimes A_{jk})$ of $A\otimes A$.

\subsection{Existence of invariant functionals}

Recall the map $\Phi$ introduced in Definition \ref{DefReyn}.

\begin{Lem}\label{LemDelInv}
The map $\Phi: A \rightarrow A^{\alpha}$ satisfies
\[
(\Phi \otimes \id)\alpha(a) = \Phi(a)\otimes \delta,\qquad \forall a\in A. 
\]
\end{Lem}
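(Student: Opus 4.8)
The plan is to reduce the statement to a single identity for the invariant functional $\varphi$, namely
\[
(\varphi\otimes\id)\Delta(h)=\varphi(h)\,\delta\qquad\text{for all }h\in H,
\]
and then to feed this into the middle leg of the twice-iterated coaction.

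First I would establish this auxiliary identity from the data of Proposition \ref{PropHopfInv}. There $\delta$ is an invertible grouplike element with $\varphi(S(h))=\varphi(h\delta)$, and $\psi=\varphi\circ S$ is a \emph{right} invariant functional; combining these two facts, $\psi(h)=\varphi(h\delta)$ for every $h$. Since $\delta^{-1}$ is grouplike we have $\Delta(h\delta^{-1})=h_{(1)}\delta^{-1}\otimes h_{(2)}\delta^{-1}$, so applying right invariance of $\psi$ to $h\delta^{-1}$ gives
\[
\psi(h_{(1)}\delta^{-1})\,h_{(2)}\delta^{-1}=\psi(h\delta^{-1})\,1 .
\]
Now $\psi(h_{(1)}\delta^{-1})=\varphi(h_{(1)})$ and $\psi(h\delta^{-1})=\varphi(h)$, so this says $\bigl((\varphi\otimes\id)\Delta(h)\bigr)\delta^{-1}=\varphi(h)1$; multiplying on the right by $\delta$ gives the identity. (Alternatively one may simply quote it from \cite{VDZ99b}.)

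Next I would unwind the left-hand side of the lemma. Writing $\alpha(a)=a_{(0)}\otimes a_{(1)}$, using $\Phi=(\id\otimes\varphi)\alpha$ and coassociativity of the coaction, $(\alpha\otimes\id)\alpha=(\id\otimes\Delta)\alpha$, one computes
\[
(\Phi\otimes\id)\alpha(a)=(\id\otimes\varphi\otimes\id)(\alpha\otimes\id)\alpha(a)=(\id\otimes\varphi\otimes\id)(\id\otimes\Delta)\alpha(a)=\bigl(\id\otimes((\varphi\otimes\id)\Delta)\bigr)\alpha(a).
\]
Inserting the auxiliary identity in the second tensor leg turns the right-hand side into $\varphi(a_{(1)})\,a_{(0)}\otimes\delta=\bigl((\id\otimes\varphi)\alpha(a)\bigr)\otimes\delta=\Phi(a)\otimes\delta$, which is exactly the assertion. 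Since $\Phi(a)\in A^{\alpha}$ (as already noted), the identity indeed lives in $A^{\alpha}\otimes H$.

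The only non-formal input is the auxiliary identity; everything downstream is Sweedler-notation bookkeeping with coassociativity, so I anticipate no real difficulty.
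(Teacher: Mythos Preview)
Your proof is correct and follows essentially the same approach as the paper: both reduce the lemma to the identity $(\varphi\otimes\id)\Delta(h)=\varphi(h)\delta$ and then apply coassociativity of the coaction. The paper simply cites this identity from \cite[Proposition 3.8]{VD98} and declares the rest ``immediate'', whereas you supply a self-contained derivation of the identity from the data in Proposition~\ref{PropHopfInv} and spell out the Sweedler bookkeeping; both are sound and yours is more explicit.
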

\begin{proof}
This follows immediately from the identity
\[
(\varphi\otimes \id)\Delta(h) = \varphi(h)\delta,\qquad h\in H,
\]
see \cite[Proposition 3.8]{VD98}.
\end{proof}

Let us denote 
\[
\mathscr{S}: A^{\alpha} \rightarrow k,\quad \sum_i a_i p_i \mapsto \sum_i a_i,
\]
and
\[
\varphi_A = \mathscr{S} \circ \Phi: A \rightarrow k.
\]
By abuse of notation, we will also write the natural extension of the maps $\varphi_i$ from \eqref{EqVarphii} to $A$ as $\varphi_i$, so that
\begin{equation}\label{EqDefPhii}
\varphi_i: A \rightarrow k,\quad a \mapsto \varphi_i(a) = \varphi_i(p_iap_i).
\end{equation}
Then we have
\[
\varphi_A(a) = \sum_i \varphi_i(a),\qquad a\in A.
\]
Note again that the sum in the right hand side is finite, as $\varphi_i(a)=0$ for all but finitely many $i \in I$. By Lemma \ref{FaithfulPhi} and Lemma \ref{LemDelInv}, we obtain the following.

\begin{Prop}
The functional $\varphi_A$ is faithful, i.e. the pairing
\[
A \times A \rightarrow k,\qquad (a,a') \mapsto \varphi_A(aa')
\]
is non-degenerate. Moreover, the functional $\varphi_A: A \rightarrow k$ is $\delta$-invariant:
\[
(\varphi_A\otimes \id)\alpha(a) = \varphi_A(a)\delta,\qquad \forall a\in A.
\] 
\end{Prop}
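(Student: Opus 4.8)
The plan is to verify the two assertions separately, both reducing to facts already established for the homogeneous pieces $A_{ii}$ and the pairings $\varphi_i$.

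For faithfulness: since $A = \oplus_{ij} A_{ij}$, it suffices to show that for a nonzero $a \in A_{ij}$ there is some $a' \in A$ with $\varphi_A(aa') \neq 0$, and symmetrically on the other side. Given $0 \neq a \in A_{ij}$, apply Lemma \ref{FaithfulPhi} with the roles chosen so that the relevant pairing involves $A_{ji} \times A_{ij} \to A_{jj}$: there exists $b \in A_{ji}$ with $\varphi_j(ba) \neq 0$. Since $ba \in A_{jj}$ and $\varphi_k$ vanishes on $A_{jj}$ for $k \neq j$ (as $p_k(ba)p_k = 0$), we get $\varphi_A(ba) = \varphi_j(ba) \neq 0$, so $a' = b$ works. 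The reverse direction is entirely symmetric, using the non-degeneracy of $A_{ij} \times A_{ji} \to A_{ii}$ from the same lemma. This handles faithfulness.

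For $\delta$-invariance: by Lemma \ref{LemDelInv} we already know $(\Phi \otimes \id)\alpha(a) = \Phi(a) \otimes \delta$. The only thing to do is apply $\mathscr{S} \otimes \id$ to both sides. The left-hand side becomes $(\mathscr{S}\Phi \otimes \id)\alpha(a) = (\varphi_A \otimes \id)\alpha(a)$, and the right-hand side becomes $\mathscr{S}(\Phi(a)) \otimes \delta = \varphi_A(a)\delta$. This is immediate once one notes that $\mathscr{S}$ is a well-defined linear functional on $A^{\alpha} \cong k_I$ (finitely supported functions on $I$), so that $\mathscr{S} \otimes \id$ makes sense and commutes with $\id\otimes(-)$ in the obvious way; the finiteness of all sums involved was noted after \eqref{EqLocalUnit}.

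The main (and only real) subtlety is bookkeeping: one must be careful that $\varphi_A = \mathscr{S}\circ\Phi$ genuinely equals $\sum_i \varphi_i$ as stated, and that $\varphi_i(a) = \varphi_i(p_i a p_i)$ vanishes off the $(i,i)$-component, so that the faithfulness argument isolates a single summand cleanly; this is exactly the content of \eqref{EqDefPhi} and \eqref{EqDefPhii} combined with the direct sum decomposition $A = \oplus_{ij}A_{ij}$. No genuinely hard step is anticipated — the proposition is essentially a packaging of Lemma \ref{FaithfulPhi} and Lemma \ref{LemDelInv}.
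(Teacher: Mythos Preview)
Your proof is correct and follows exactly the approach the paper intends: the proposition is stated immediately after ``By Lemma \ref{FaithfulPhi} and Lemma \ref{LemDelInv}, we obtain the following,'' with no further argument given, and you have simply unpacked those two citations together with the decomposition $A=\bigoplus_{ij}A_{ij}$ and the definition $\varphi_A=\mathscr{S}\circ\Phi$. The only cosmetic wrinkle is that you announce you will produce $a'$ with $\varphi_A(aa')\neq 0$ but first produce $b$ with $\varphi_A(ba)\neq 0$; both sides are covered, so this is harmless.
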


In particular, also the $A^{\alpha}$-valued pairing
\[
A \times A \rightarrow A^{\alpha},\quad (a,a')\mapsto \Phi(aa')
\]
is non-degenerate.

Having the $\delta$-invariant functional $\varphi_A$ on $A$, it is natural to ask whether there exists also an \emph{invariant} functional. It turns out that this is indeed the case, but this is not immediately obvious. If $\psi: A \rightarrow k$ is any functional, let us write
\[
\psi_{ij}: A \rightarrow k,\quad a \mapsto \psi(p_iap_j).
\]
for its components. We also write
\[
\psi_{i-} = \psi(p_i-),\qquad \psi_{-j} = \psi(-p_j).
\]

\begin{Def}
A functional $\psi: A \rightarrow k$ is called \emph{invariant} if 
\[
(\psi\otimes \id)\alpha(x) = \psi(x)1_H,\qquad \forall x\in A. 
\]
It is called \emph{left complete} if $\psi_{i-}\neq 0$ for all $i$, and \emph{right complete} if $\psi_{-j}\neq 0$ for all $j$. 
\end{Def}

\begin{Lem}\label{LemInvpsi}
There exists a left complete invariant functional $\psi_{A}$. Similarly, there exists a right complete invariant functional $\psi_A'$. 
\end{Lem}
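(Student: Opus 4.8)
The plan is to reduce to the connected case and then obtain $\psi_A$ by twisting the $\delta$-invariant functional $\varphi_A$ by suitable elements of the $\delta$-isotypical component $A_\delta:=\{y\in A\mid\alpha(y)=y\otimes\delta\}$. For the reduction, I would invoke Proposition~\ref{PropConnDecom} to write $A=\oplus_{x\in I/\sim}A_x$ with each $A_x$ a connected $I_x$-Galois object; since the $A_x$ are comodule-algebra summands (so $\alpha$ is block-diagonal and $A_xA_y=0$ for $x\neq y$), the direct sum of left complete invariant functionals on the $A_x$ — a sum which is locally finite on $A$, any element lying in only finitely many blocks — is again left complete and invariant. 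So from now on assume $A$ connected, and recall $A_{ij}A_{jk}=A_{ik}$ in this case.

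The elementary point is that \emph{for $x\in A_\delta$ the functional $\varphi_A(-\cdot x)\colon A\to k$ is invariant}: since $\alpha(ax)=\alpha(a)(x\otimes\delta)$, applying the $\delta$-invariance of $\varphi_A$ (the proposition preceding this lemma) to $ax$ gives $\sum\varphi_A(a_{(0)}x)a_{(1)}\delta=\varphi_A(ax)\delta$, and cancelling the invertible $\delta$ yields $(\varphi_A(-\cdot x)\otimes\id)\alpha(a)=\varphi_A(ax)1_H$; symmetrically $\varphi_A(x\cdot-)$ is invariant. Granting this, I would pick for each $i\in I$ an index $j_i$ and a \emph{nonzero} $x_i\in A_\delta\cap A_{j_ii}$ — whose existence is the real content, treated below — and set $\psi_A:=\sum_{i\in I}\varphi_A(-\cdot x_i)$. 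This sum is finite when evaluated on any element of $A$ (only finitely many blocks occur), so $\psi_A$ is a well-defined invariant functional. Since $x_i=p_{j_i}x_ip_i$, one computes $\psi_A(p_ia)=\varphi_i\!\big((p_iap_{j_i})\,x_i\big)$, which by the non-degeneracy of the pairing $A_{i j_i}\times A_{j_i i}\to A_{ii}$ of Lemma~\ref{FaithfulPhi} is nonzero for suitable $a$; hence $\psi_A$ is left complete. The right complete $\psi_A'$ is built identically, from nonzero elements of $A_\delta$ supported in each row together with the functionals $\varphi_A(x\cdot-)$.

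The main obstacle — the one nontrivial step — is thus to show $A_\delta p_i\neq0$ for every $i$ (and, dually, $p_iA_\delta\neq0$ for every $i$, for $\psi_A'$). Here I would examine the Galois map block by block: via the splitting map $\splitting$ of \eqref{EqSplitting} one has $A\otimes_{A^\alpha}A\cong\oplus_{i,j,k}A_{ij}\otimes A_{jk}$, and a short check shows $\can$ carries $\oplus_jA_{ij}\otimes A_{jk}$ bijectively onto $A_{ik}\otimes H$, and moreover is an isomorphism of right $H$-comodules when $A\otimes_{A^\alpha}A$ carries the coaction on its second tensor leg and $A\otimes H$ carries $\id\otimes\Delta$ on its second leg. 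Passing to $\delta$-isotypical components now gives the information I want: on $(A\otimes H,\id\otimes\Delta)$ that component is $A\otimes k\delta$ (the grouplike $\delta$ spans a one-dimensional subcomodule), while on $A_{ij}\otimes A_{jk}$, the coaction affecting only the second factor, it is $A_{ij}\otimes(A_{jk}\cap A_\delta)$; hence $\oplus_jA_{ij}\otimes(A_{jk}\cap A_\delta)\cong A_{ik}\otimes k\delta$. By connectedness $A_{ik}\neq0$, so the left-hand side is nonzero, forcing $A_{jk}\cap A_\delta\neq0$ for some $j$, i.e. $A_\delta p_k\neq0$; and $k$ was arbitrary. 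Running the same argument with the right Galois map $\widetilde{\can}$ of \eqref{EqGalOth} — which is a comodule isomorphism for the coaction on the \emph{first} leg of $A\otimes_{A^\alpha}A$ — gives $p_iA_\delta\neq0$ for all $i$.

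Everything else is routine bookkeeping with the block decomposition and with the local finiteness of the sums involved, all of which rests on the $H$-coinvariant local units, that is, on \eqref{EqLocalUnit}.
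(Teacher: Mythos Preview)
Your proof is correct, but it follows a different route from the paper's. The paper constructs the components directly as
\[
\psi_{i-}(x)=\varphi_i(x_{(0)}y_i)\,\psi(x_{(1)}),
\]
checks invariance via the right invariance of $\psi$ on $H$, and uses only surjectivity of the right Galois map (together with $\varphi_i\neq 0$, $\psi\neq 0$) to find $y_i$ making $\psi_{i-}$ nonzero; the whole argument fits in a few lines and needs no reduction to the connected case. Your approach is more structural: you isolate the $\delta$-isotypical subspace $A_\delta$, observe that twisting $\varphi_A$ by any element of $A_\delta$ gives an invariant functional, and then use that $\can$ is an $H$-comodule isomorphism to show $A_\delta p_k\neq 0$ for every $k$. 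This is a perfectly valid and arguably more conceptual argument --- it anticipates the modular element $\delta_A\in A_\delta$ introduced later in the section, for which $\psi_A=\varphi_A(-\,\delta_A)$ --- at the cost of a slightly longer setup. Two minor remarks: the reduction to the connected case is unnecessary, since in your isotypical argument you may simply take $i=k$ and use $p_k\in A_{kk}\neq 0$; and your finiteness claim for $\sum_i\varphi_A(-\cdot x_i)$ is justified by noting that $\varphi_A(ax_i)=\varphi_i(p_iap_{j_i}x_i)$ vanishes unless $p_ia\neq 0$, which happens for only finitely many $i$.
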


\begin{proof}
By surjectivity of the Galois maps
\[
a \otimes a' \mapsto (a\otimes 1_H)\alpha(a'),\qquad a\otimes a' \mapsto \alpha(a)(a'\otimes 1_H),
\]
together with the fact that none of the $\varphi_i$ is zero, it follows that for each index $i\in I$ there exists an element $y_i\in A$ such that 
\[
\psi_{i-}: A \mapsto k,\quad x \mapsto\varphi_i(x_{(0)}y_i)\psi(x_{(1)})
\]
is not the zero map, and that there exists for any index $j\in I$ an element $z_j\in A$ such that 
\[
\psi_{-j}': A \mapsto k,\quad w \mapsto \varphi_j(z_jw_{(0)})\psi(w_{(1)})
\]
is not the zero map. 

A direct calculation for any $x\in A$, $i\in I$ shows that:
\begin{align*}
\psi_{i-}(x_{(0)})x_{(1)}&=\varphi_{i}(x_{(0)}y_i)\psi(x_{(1)})x_{(2)}\\
&= \varphi_i(x_{(0)}y_i)\psi(x_{(1)})1_H\\
&= \psi_{i-}(x)1_H.
\end{align*}
Replacing $\psi_{i-}$ by $\psi_{-j}'$ proves the same result for $\psi_{-j}'$. Put now
\begin{equation}\label{EqDefPsiA}
\psi_A(a) = \sum_i \psi_{i-}(a),\qquad \psi_A' = \sum_j \psi_{-j}'(a),
\end{equation}
where one notes that the sums are in fact finite. Then $\psi_A$ and $\psi_A'$ define respectively left and right complete invariant functionals.
\end{proof}

\begin{Lem}\label{LemTheta}
Let $\psi_A$ be an invariant functional. Then there exists a unique linear map $\theta:A\to A$ such that for all $x,w\in A$
\[
\varphi_A\left(x\theta(w)\right) = \psi_{A}\left(xw\right).
\]
If $\psi_A$ is left complete, then $\theta$ is surjective. Similarly, if $\psi_A'$ is an invariant functional, there exists a unique linear map $\theta': A\to A$ such that for all $x,y\in A$
\[
\varphi_A\left(\theta'(x)y\right) = \psi_A'\left(xy\right),
\]
and $\theta'$ is surjective if $\psi_A'$ is right complete. 
\end{Lem}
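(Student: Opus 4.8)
The plan is to establish uniqueness and linearity of $\theta$ formally from the faithfulness of $\varphi_A$, to recast the existence of $\theta$ as a representability statement, reduce it through the Morita‑theoretic structure of an $I$‑Galois object to the homogeneous case, and settle that case by invoking equivariant absolute semisimplicity; surjectivity and the primed statement then follow quickly.

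\emph{Uniqueness, linearity, and first reduction.} If $\theta_1,\theta_2$ both satisfy the defining identity, then $\varphi_A\bigl(x(\theta_1(w)-\theta_2(w))\bigr)=0$ for all $x$, so $\theta_1=\theta_2$ by faithfulness of $\varphi_A$ (the Proposition preceding Lemma~\ref{LemDelInv}); the same faithfulness forces any such $\theta$ to be $k$‑linear. So it suffices to produce, for each $w\in A$, an element $\theta(w)$ with $\varphi_A(x\theta(w))=\psi_A(xw)$ for all $x$. Writing $A=\oplus_{ij}A_{ij}$ and using that $\varphi_A$ is supported on the diagonal blocks while, for $w\in A_{jk}$, $\psi_A(xw)$ only sees the components $x_{lj}$ of $x$, one checks that $\theta(w)$ must lie in $\widetilde N_j=\oplus_l A_{jl}$ and that the defining identity is equivalent to: for each $l$ there is $b_l\in A_{jl}$ with $\varphi_l(xb_l)=\psi_A(xw)$ for all $x\in A_{lj}$ (only finitely many $b_l$ being nonzero).

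\emph{Reduction to the homogeneous case.} By Proposition~\ref{PropConnDecom} we may assume $A$ connected, so that by Corollary~\ref{CorEqMorHom} the pair $(A_{lj},A_{jl})$ is a strict equivariant Morita context between the \emph{unital} algebras $A_{ll}$ and $A_{jj}$. Hence $A_{lj}$ is finitely generated projective as a left $A_{ll}$‑module, with a dual basis $\{(d_r,b_r)\}$, $d_r\in A_{lj}$, $b_r\in A_{jl}$, satisfying $x=\sum_r (xb_r)d_r$ with $xb_r\in A_{ll}$, and $A_{jl}\cong\Hom_{A_{ll}}(A_{lj},A_{ll})$ via $b\mapsto(\,\cdot\,b)$. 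A short computation then shows that the existence of $b_l$ follows as soon as one has a (necessarily left‑$A_{ll}$‑linear, by faithfulness of $\varphi_l$ on $A_{ll}$, Lemma~\ref{FaithfulPhi}) map $\rho\colon A_{lk}\to A_{ll}$ with $\varphi_l\bigl(u\rho(v)\bigr)=\psi_A(uv)$ for all $u\in A_{ll}$, $v\in A_{lk}$ — one then lets $b_l$ represent the $A_{ll}$‑module map $d\mapsto\rho(dw)$. Thus everything reduces to the following statement about the homogeneous, $H$‑equivariantly absolutely semisimple, unital comodule algebra $C=A_{ll}$ with faithful Reynolds functional $\varphi_l$: for every $v$ in the strict Morita bimodule $A_{lk}$, the functional $u\mapsto\psi_A(uv)$ on $C$ lies in the image of $C\to C^{*}$, $b\mapsto\varphi_l(\,\cdot\,b)$.

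\emph{The homogeneous case (the main obstacle).} Here equivariant absolute semisimplicity is genuinely needed: when $C$ is infinite‑dimensional $\varphi_l$ is not a Frobenius functional, so faithfulness alone does not give the representability, and no purely formal Radon--Nikodym argument is available. The intended route is to pass, via Proposition~\ref{PropCatEqDi}, to the smash product $C\#\hat H$, which is absolutely semisimple (Theorem~\ref{TheoEqMor}) and hence has a Wedderburn decomposition $C\#\hat H\cong\oplus_t M_{W_t}(V_t)$ (Theorem~\ref{TheoMainSemiSimp}); the invariance of $\psi_A$ becomes the module identity $\psi_A(\omega\cdot a)=\hat\varepsilon(\omega)\psi_A(a)$ for $\omega\in\hat H$, and combining this with the density property of absolutely simple modules (Lemma~\ref{LemAbsEnk}) and the faithfulness of $\varphi_l$ one shows that $u\mapsto\psi_A(uv)$ is of the form $u\mapsto\varphi_l(ub)$. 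I expect this to be the hard part; it is precisely where the paper's philosophy of trading analytic arguments for equivariant semisimplicity pays off.

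\emph{Surjectivity and the primed statement.} Assume $\psi_A$ left complete. By Lemma~\ref{LemMax} each $\widetilde N_l=p_lA$ is an equivariantly simple right $A$‑module and $\widetilde N_l\cong\widetilde N_j$ for all $j$; for $0\neq x\in A_{lj}$, left multiplication by $x$ is a nonzero (it sends $p_j$ to $x$) hence bijective morphism $\widetilde N_j\to\widetilde N_l$, so $x\widetilde N_j=\widetilde N_l$ and $\psi_A(x\widetilde N_j)=\psi_A(\widetilde N_l)\neq 0$ by left completeness. Feeding this into the non‑degenerate $\varphi_l$‑pairing $A_{lj}\times A_{jl}\to k$ of Lemma~\ref{FaithfulPhi} (and using the finite generation of $A_{jl}$ over $A_{ll}$ to rule out infinite‑dimensional pathologies) shows that $\theta$ maps $\widetilde N_j$ onto $A_{jl}$ for all $j,l$, i.e. $\theta$ is surjective. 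Finally, $\theta'$ is obtained by the mirror‑image argument, with $\can$ replaced by the right Galois map $\widetilde{\can}$, the modules $\widetilde N_i$ replaced by the $\widetilde M_j=\oplus_i A_{ij}$ of Lemma~\ref{LemMax}, and left completeness by right completeness.
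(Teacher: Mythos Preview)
Your proposal has a genuine gap and at least one error, and it diverges entirely from the paper's approach.

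\textbf{The existence of $\theta$ is not actually proved.} You yourself flag the homogeneous case as ``the hard part'' and only gesture at a plan: pass to $C\#\hat H$, use its Wedderburn decomposition, invoke density of absolutely simple modules, and conclude representability of $u\mapsto\psi_A(uv)$ by some $\varphi_l(\,\cdot\,b)$. None of this is carried out, and it is not clear how the density statement of Lemma~\ref{LemAbsEnk} would yield a Radon--Nikodym element; density concerns the action on the simple module, not the representability of functionals on $C$ itself. This is precisely the step where an explicit construction is needed, and your outline does not provide one.

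\textbf{The surjectivity argument contains a mistake.} You write that ``$\widetilde N_l\cong\widetilde N_j$ for all $j$'' by Lemma~\ref{LemMax}, but that lemma states exactly the opposite: the $\widetilde N_i$ are pairwise \emph{non}-isomorphic equivariantly simple right $A$-modules. Moreover, left multiplication by $x\in A_{lj}$ is a right $A$-module map but is \emph{not} $H$-equivariant (since $\alpha(xa)=\alpha(x)\alpha(a)$, not $(x\otimes 1)\alpha(a)$), so Schur's lemma in the equivariant category does not apply. And as plain right $A$-modules the $p_lA$ are not claimed anywhere to be simple, so Schur does not apply there either.

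\textbf{Comparison with the paper.} The paper bypasses all of this by writing down $\theta$ explicitly using the inverse of the right Galois map: one defines
\[
\theta_h(w) = (\id\otimes\psi_A)\bigl(\splitting\circ\widetilde{\can}^{-1}\bigr)\bigl(\alpha(w)(1\otimes h)\bigr),
\]
and a direct computation (using invariance of $\psi_A$, the defining property of $\widetilde{\can}$, and the $A^\alpha$-bimodularity of $\Phi$) shows $\varphi(h)\psi_A(xw)=\varphi_A(x\theta_h(w))$. Fixing $h$ with $\varphi(h)=1$ gives $\theta$. Surjectivity is equally explicit: given $y\in A_{ij}$, left completeness supplies $z$ with $\psi_A(z)=1$, one writes $\alpha(y)(z\otimes 1)=\sum_r\alpha(w_r)(1\otimes h_r)$ via the Galois map, and then $\theta(\sum_r\varphi(h_r)w_r)=y$. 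No appeal to equivariant semisimplicity or Morita theory is needed at this point; the Galois condition alone does the work.
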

\begin{proof}
We explicitly construct the map $\theta$ here. The proof of the existence and surjectivity of $\theta'$ is analogous. Note that the uniqueness of both maps is immediate from the faithfulness of $\varphi_A$. 

We first make the following computation. Let $w\in A,h\in H$, and put 
\[
(\splitting \circ \widetilde{\can}^{-1})(\alpha(w)(1\otimes h)) = \sum_{r = 1}^n y_r \otimes z_r, 
\]
where $\widetilde{\can}$ is the right Galois map \eqref{EqGalOth}. Then we have 
\begin{equation}\label{EqOneMore}
\alpha(w)(1\otimes h) = \sum_{r = 1}^n \alpha(y_r)(z_r\otimes 1),
\end{equation}
and 
\begin{equation}\label{Eqyp}
y_r p_i \otimes z_r = y_r\otimes p_iz_r,\qquad \textrm{for all }r,i.
\end{equation}
By invariance of $\psi_{i-}$, we then obtain for all $x\in A$ and $i\in I$ that
\begin{eqnarray*}
\psi_{i-}(xw)\varphi(h) &= &(\psi_{i-}\otimes \varphi)(\alpha(xw)(1\otimes h)) \\
&\underset{\eqref{EqOneMore}}{=}&  \sum_{r = 1}^n (\psi_{i-}\otimes \varphi)(\alpha(xy_r)(z_r\otimes 1))\\
&\underset{\eqref{EqDefRey}}{=}& \sum_{r = 1}^n  \psi_{i-}(\Phi(xy_r)z_r)\\
&\underset{\eqref{EqDefPhi},\eqref{EqDefPhii}}{=}& \sum_{r = 1}^n  \varphi_i(xy_r)\psi_{i-}(z_r)\\
&=&\varphi_i\left(x\left(\sum_{r = 1}^{n}\psi_{i-}(z_r)y_r\right)\right)\\
&\underset{\eqref{EqDefPsiA}}{=}& \varphi_i\left(x\left(\sum_{r = 1}^{n}\psi_A(p_iz_r)y_r\right)\right)\\
&\underset{\eqref{Eqyp}}{=}& \varphi_i\left(x\left(\sum_{r = 1}^{n}\psi_A(z_r)y_rp_i\right)\right)\\
&=& \varphi_i\left(x\left(\sum_{r = 1}^{n}\psi_A(z_r)y_r\right)\right)
\end{eqnarray*}
Putting 
\[
\theta_h: A \rightarrow A,\quad w \mapsto ((\id\otimes \psi_A)\circ \splitting \circ \widetilde{\can}^{-1})(\alpha(w)(1\otimes h)),
\] 
we hence see that 
\[
\varphi(h)\psi_A(xw) = \varphi_A(x\theta_h(w)),\qquad \forall x,w\in A,h\in H.
\]
In particular, fix $h'\in H$ with $\varphi(h') = 1$, and put $\theta = \theta_{h'}$. Then we see that
\[
\psi_A(xw) = \varphi_A(x\theta(w)),\qquad \forall x,w\in A.
\]
Moreover, since
\[
\varphi(h)\varphi_A(x\theta(w)) = \varphi(h)\psi_A(xw) = \varphi_A(x\theta_h(w))
\]
for all $h\in H$ and $x,w\in A$, faithfulness of $\varphi_A$ entails that
\begin{equation}\label{EqHulp1}
\theta_h(w) = \varphi(h)\theta(w),\qquad \forall h\in H,w\in A.
\end{equation}
Let us now finally show that $\theta$ is surjective in case $\psi_A$ is left complete. Consider $i,j\in I$ and $y\in A_{ij}$. As $\psi_{j-}\neq 0$, we can find $k\in I$ and $z\in A_{jk}$ with $\psi_A(z) = \psi_{j-}(z) = 1$. On the other hand, we can find finitely many $w_r\in A$ and $h_r\in H$ such that
\[
\alpha(y)(z\otimes 1) = \sum_r\alpha(w_r)(1\otimes h_r).
\]
By bijectivity of $\widetilde{\can}$ and the form of the range of $\splitting$, it follows that
\begin{equation}\label{EqHulp2}
\sum_r (\splitting \circ \widetilde{\can}^{-1})(\alpha(w_r)(1\otimes h_r)) = y \otimes z, 
\end{equation}
and hence
\begin{eqnarray*}
\theta\left(\sum_r \varphi(h_r)w_r\right) &\underset{\eqref{EqHulp1}}{=}& \sum_r \theta_{h_r}(w_r)\\
&\underset{\eqref{EqHulp2}}{=}& \psi_A(z)y \\
&=& y.
\end{eqnarray*} 
\end{proof}

\begin{Cor}
Let $\psi_A,\psi_A'$ be invariant functionals. Then for all $i\in I$
\[
\{\psi_{i-}(-y)|y\in A\} \subseteq \{\varphi_i(-w)|w\in A\},\qquad \{\psi_{-j}'(x-)\mid x\in A\} \subseteq \{\varphi_i(z-)\mid z\in A\}.
\]
If moreover $\psi_A$ is left complete, resp. $\psi_A'$ right complete, then these inclusions become equalities. 
\end{Cor}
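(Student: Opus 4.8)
The plan is to derive the corollary directly from Lemma \ref{LemTheta} by ``cutting down'' the defining relations of the maps $\theta$ and $\theta'$ with the minimal coinvariant idempotents $p_i$. The only auxiliary fact needed is that, in the extended notation of \eqref{EqDefPhii}, $\varphi_A(p_i c) = \varphi_i(p_i c p_i) = \varphi_i(c)$ for every $c\in A$: in the expression $\varphi_A(z) = \sum_l \varphi_l(p_l z p_l)$ only the $l=i$ summand survives when $z = p_i c$, since $p_l p_i = 0$ for $l\neq i$ while $p_i p_i = p_i$. Symmetrically, $\varphi_A(c p_j) = \varphi_j(c)$.

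First I would fix $i\in I$ and let $\theta: A\to A$ be the map associated to the invariant functional $\psi_A$ by Lemma \ref{LemTheta}. For $y\in A$, set $w = \theta(y)\in A$. Substituting $x = p_i a$ into the identity $\varphi_A(x\theta(y)) = \psi_A(xy)$ gives, for all $a\in A$,
\[
\psi_{i-}(ay) = \psi_A(p_i a y) = \varphi_A(p_i a \theta(y)) = \varphi_i(a w),
\]
where the last step applies the auxiliary fact to $c = a\theta(y)$. Hence $\psi_{i-}(-y) = \varphi_i(-w)$ as linear functionals on $A$, which yields the inclusion $\{\psi_{i-}(-y)\mid y\in A\}\subseteq\{\varphi_i(-w)\mid w\in A\}$. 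If $\psi_A$ is left complete, then by Lemma \ref{LemTheta} the map $\theta$ is surjective, so $w = \theta(y)$ runs through all of $A$ as $y$ does and the inclusion becomes an equality. The second assertion is obtained symmetrically with $\theta'$ in place of $\theta$: substituting $z = a p_j$ into $\varphi_A(\theta'(x)z) = \psi_A'(xz)$ and using $\varphi_A(c p_j) = \varphi_j(c)$ gives $\psi_{-j}'(xa) = \varphi_j(\theta'(x)a)$ for all $a$, so that $\psi_{-j}'(x-) = \varphi_j(\theta'(x)-)$ and therefore $\{\psi_{-j}'(x-)\mid x\in A\}\subseteq\{\varphi_j(z-)\mid z\in A\}$, with equality once $\psi_A'$ is right complete and hence $\theta'$ surjective.

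I do not expect any genuine obstacle, as the corollary is a purely formal consequence of Lemma \ref{LemTheta}. The single point requiring a little care is the bookkeeping with the idempotents, namely checking that left multiplication of $\varphi_A$ by $p_i$ (respectively right multiplication by $p_j$) collapses the averaged functional to the single component $\varphi_i$ (respectively $\varphi_j$), so that the cut-down relation lands in $\varphi_i$ (respectively $\varphi_j$) rather than in $\varphi_A$ itself.
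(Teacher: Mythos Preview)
Your proof is correct and is exactly the intended derivation: the paper states this corollary without proof, treating it as immediate from Lemma \ref{LemTheta}, and your cut-down with $p_i$ (resp.\ $p_j$) together with the observation $\varphi_A(p_i c)=\varphi_i(c)$ is precisely how one reads it off. Note only that the $\varphi_i$ appearing on the right of the second displayed inclusion in the statement is a typo for $\varphi_j$, which you have silently and correctly fixed.
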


In what follows, we will fix resp. left and right complete invariant functionals $\psi_A,\psi_A'$. We want to prove a uniqueness result for the $\psi_{i-}$ and $\psi_{-j}'$. Consider
\[
\psi_{ij}: A \rightarrow k,\quad a \mapsto \psi_A(p_iap_j),\qquad \psi_{ij}': A \rightarrow k,\quad a \mapsto \psi_A'(p_iap_j).
\]
\begin{Lem}\label{LemScale}
Let $\psi_A,\psi_A'$ be invariant functionals. If $\psi_A$ is left complete, there exists for each $j\in I$ a constant $c_j \in k$ such that for all $i\in I,x\in A$ 
\[
\psi_{ij}'(x) = c_j\psi_{ij}(x).
\]
Similarly, if $\psi_A'$ is right complete, there exists for each $i\in I$ a constant $d_i \in k$ such that for all $j\in I,x\in A$ 
\[
\psi_{ij}(x) = d_i\psi_{ij}'(x).
\] 
\end{Lem}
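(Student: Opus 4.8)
The plan is to realise $\psi_A'$ as $\psi_A$ composed with an endomorphism of the equivariant $A$-module $A$, and then to apply equivariant Schur's lemma. First I would reduce to the connected case: by Proposition~\ref{PropConnDecom} we may assume $(A,\alpha)$ is connected, since $\psi_{ij}$ and $\psi'_{ij}$ vanish on $A_{ij}=\{0\}$ whenever $i\not\sim j$, and left (resp.\ right) completeness of an invariant functional restricts to each connected summand. Under connectedness, Proposition~\ref{PropExLoc} together with Lemma~\ref{LemMax} exhibits $A=\oplus_{j\in I}\widetilde{M}_j$, $\widetilde{M}_j=Ap_j$, as a direct sum of pairwise non-isomorphic \emph{equivariantly absolutely simple} $A\#\hat{H}$-modules, so that by Lemma~\ref{LemAbsEnk} one has $\Mor_{A\#\hat{H}}(\widetilde{M}_j,\widetilde{M}_l)=\{0\}$ for $j\neq l$ and $\End_{A\#\hat{H}}(\widetilde{M}_j)=k\,\id$.

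Since $\psi_A$ is left complete, Lemma~\ref{LemTheta} provides a \emph{surjective} linear map $\theta\colon A\to A$ with $\varphi_A(x\theta(w))=\psi_A(xw)$, and, applied to the invariant functional $\psi_A'$ (only the existence part is needed), a linear map $\theta'\colon A\to A$ with $\varphi_A(x\theta'(w))=\psi_A'(xw)$. Surjectivity of $\theta$ and the faithfulness of $\varphi_A$ (established just after Lemma~\ref{LemDelInv}) show that $\psi_A$ is \emph{left faithful}: if $\psi_A(aA)=0$ then $\varphi_A(aA)=\varphi_A\bigl(a\,\theta(A)\bigr)=\{0\}$, whence $a=0$. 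Choosing any $k$-linear section $s$ of $\theta$ and putting $T:=s\circ\theta'$ we obtain a linear $T\colon A\to A$ with
\[
\psi_A\bigl(xT(w)\bigr)=\varphi_A\bigl(x\,\theta(T(w))\bigr)=\varphi_A\bigl(x\,\theta'(w)\bigr)=\psi_A'(xw)\qquad(x,w\in A).
\]
By left faithfulness of $\psi_A$ this determines $T$ uniquely (independently of $s$), and $T$ is at once left $A$-linear, since $\psi_A\bigl(x(T(aw)-aT(w))\bigr)=\psi_A'(xaw)-\psi_A'(xaw)=0$ for all $x$ forces $T(aw)=aT(w)$.

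The key point is that $T$ is also $H$-equivariant. Invariance of $\psi_A$ and of $\psi_A'$ gives, for all $x,w$,
\[
\psi_A(x_{(0)}T(w)_{(0)})\,x_{(1)}T(w)_{(1)}=\psi_A(xT(w))\,1_H=\psi_A'(xw)\,1_H=\psi_A(x_{(0)}T(w_{(0)}))\,x_{(1)}w_{(1)},
\]
so that $\xi:=\alpha(T(w))-(T\otimes\id)\alpha(w)\in A\otimes H$ is annihilated by $(\psi_A\otimes\id)\bigl(\alpha(x)\,\cdot\,\bigr)$ for every $x\in A$; using again the left faithfulness of $\psi_A$ (and the surjectivity of the Galois map, to rewrite elements of $A\otimes H$ through $\can$ and to absorb the coinvariant local units) one concludes $\xi=0$, i.e.\ $\alpha\circ T=(T\otimes\id)\circ\alpha$. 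I expect this faithfulness step, rather than anything formal, to be the main obstacle; note that the distinguished grouplike $\delta$ attached to $\varphi_A$ causes no trouble here, precisely because $\psi_A$ and $\psi_A'$ are honestly invariant rather than $\delta$-invariant. (Alternatively one can verify directly from the formula in Lemma~\ref{LemTheta} that $\theta,\theta'$ intertwine $\alpha$ with its $\delta$-twist and that $T$ may be chosen equivariant.)

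Being a morphism of equivariant $A$-modules $A=\oplus_j\widetilde{M}_j\to\oplus_j\widetilde{M}_j$, $T$ satisfies $T(Ap_j)\subseteq Ap_j$ and $T|_{Ap_j}=c_j\,\id$ for some $c_j\in k$, by the $\Mor$- and $\End$-computations above. Hence $\psi_A'(xw)=\psi_A(xT(w))=c_j\psi_A(xw)$ for all $w\in Ap_j$ and $x\in A$; taking $w=p_iap_j\in A_{ij}$ and a coinvariant local unit $e$ with $ew=w$ yields $\psi'_{ij}(a)=\psi_A'(ew)=c_j\psi_A(ew)=c_j\psi_{ij}(a)$, which is the first assertion. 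The second assertion follows by the symmetric argument with left and right (and $i$ and $j$) interchanged: right completeness of $\psi_A'$ makes the right-handed map of Lemma~\ref{LemTheta} surjective, one builds a right $A$-linear equivariant $T'\colon A\to A$ with $\psi_A(xy)=\psi_A'(T'(x)y)$, and Schur's lemma applied to $A=\oplus_i\widetilde{N}_i$ (with $\widetilde{N}_i=p_iA$) gives $T'|_{p_iA}=d_i\,\id$ and hence $\psi_{ij}=d_i\,\psi'_{ij}$.
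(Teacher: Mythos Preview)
Your argument is correct and genuinely different from the paper's. The paper proceeds by a direct, elementary computation: using Lemma~\ref{LemTheta} (or rather its Corollary) it finds, for each $j$, an element $p_j'\in A$ with $\psi'_{ij}=\psi_{i-}(-p_j')$ for all $i$ --- in your language $p_j'=T(p_j)$ --- and then, fixing $y\in A_{jj}$ with $\varphi_j(y)=1$ and expanding $(x\otimes 1)\alpha(y)$ through the surjective Galois map, obtains the explicit constant $c_j=\varphi_j(yp_j')$. No equivariance of any auxiliary map and no Schur lemma is invoked. Your approach instead packages the comparison of $\psi_A$ and $\psi_A'$ into an $H$-equivariant left $A$-module endomorphism $T$ of $A$ and reads off the scalars $c_j$ from Lemma~\ref{LemMax} via $\End_{A\#\hat{H}}(\widetilde{M}_j)=k$. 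This is conceptually cleaner and explains \emph{why} the proportionality constant depends only on $j$ (namely, because $T$ must act by a scalar on each equivariantly absolutely simple summand $Ap_j$), at the cost of invoking the heavier machinery of Section~3.

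One small sharpening of your equivariance step: the claim that $(\psi_A\otimes\id)(\alpha(x)\,\xi)=0$ for all $x$ forces $\xi=0$ does not really use the Galois map. Rather, use that the comodule map $A\otimes H\to A\otimes H$, $y\otimes g\mapsto y_{(0)}\otimes y_{(1)}g$, is bijective to write $\xi=\sum_l\alpha(c_l)(1\otimes g_l)$ with the $g_l$ linearly independent; then invariance of $\psi_A$ gives $(\psi_A\otimes\id)(\alpha(x)\,\xi)=\sum_l\psi_A(xc_l)\,g_l=0$, hence $\psi_A(xc_l)=0$ for all $x$, and left faithfulness yields $c_l=0$. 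With this adjustment your sketch goes through as written.
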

\begin{proof}
We will prove the existence of the $c_j$. The existence of the $d_i$ follows again similarly.

By Lemma \ref{LemTheta}, there exists for each $j\in I$ an element $p_j' \in A$ such that 
\begin{equation}\label{def ptilde}
\psi_{ij}' = \psi_{i-}(-p_j'). 
\end{equation}
Fix now $j\in I$, and fix an element $y\in A_{jj}$ such that $\varphi_j(y)=1$. Let $x\in A$. Then there must exist finitely many elements $h_1,\dots,h_n\in H$ and $a_1,\dots,a_n\in A$ such that
\begin{equation}\label{vgl 1}
(x\otimes 1)\alpha(y) = \sum_{r = 1}^{n}(1\otimes h_r)\alpha(a_r).
\end{equation}
Using that $\varphi_j(y) = 1$, we hence find that
\begin{eqnarray*}
\psi_{ij}'(x) &=& \psi_{A}'(p_ix\Phi(y)p_j)\\
&=& (\psi_{ij}'\otimes \varphi)\left((p_ix\otimes 1)\alpha(y p_j)\right)\\
&\underset{\eqref{vgl 1}}{=}& (\psi_{ij}'\otimes \varphi)\left(\sum_{r = 1}^{n}(1\otimes h_r)\alpha(p_ia_r p_j)\right)\\
&=& \sum_{r = 1}^{n}\varphi(h_r)\psi_{ij}'(a_r)\\
&\underset{\eqref{def ptilde}}{=}& \sum_{r= 1}^{n}\varphi(h_r)\psi_{i-}(a_r p_j')\\
&=&(\psi_{i-}\otimes \varphi)\left(\sum_{r = 1}^{n}{(1\otimes h_r)\alpha(a_r p_j')}\right)\\
&\underset{\eqref{vgl 1}}{=}& (\psi_{i-}\otimes \varphi)((x\otimes 1)\alpha(y p_j'))\\
&=& \psi_{i-}(x\Phi(y p_j'))\\
&=& \sum_{k\in I} \varphi_{k}(y p_j')\psi_{i-}(x p_{k})\\
&=& \varphi_{j}(yp_j') \psi_{ij}(x),
\end{eqnarray*}
where in the last step we used that $y \in A_{jj}$. It follows that
\[
\psi_{ij}'(x) = \varphi_j(yp_j') \psi_{ij}.
\]
\end{proof}

\begin{Cor}
The space of invariant functionals on $A_{ij}$ is at most one-dimensional. 
\end{Cor}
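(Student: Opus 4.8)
The plan is to deduce the statement directly from Lemma~\ref{LemScale}, once one knows how to move between invariant functionals on the subcomodule $A_{ij}$ and invariant functionals on all of $A$. First I would record that, since $p_i$ and $p_j$ are $\alpha$-coinvariant, the coaction restricts to $\alpha\colon A_{ij}\to A_{ij}\otimes H$, so that an ``invariant functional on $A_{ij}$'' is a linear map $\chi\colon A_{ij}\to k$ with $(\chi\otimes\id)\alpha(x)=\chi(x)1_H$ for all $x\in A_{ij}$.

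The key step is an extension--restriction correspondence. Restriction sends any invariant functional $\psi$ on $A$ to an invariant functional $\psi|_{A_{ij}}$ on $A_{ij}$, since $\alpha(A_{ij})\subseteq A_{ij}\otimes H$. Conversely, given an invariant functional $\chi$ on $A_{ij}$, I would define $\tilde\chi\colon A\to k$ by $\tilde\chi(a)=\chi(p_iap_j)$; using $\alpha(p_iap_j)=p_ia_{(0)}p_j\otimes a_{(1)}$ together with the invariance of $\chi$ on $A_{ij}$ one gets $(\tilde\chi\otimes\id)\alpha(a)=(\chi\otimes\id)\alpha(p_iap_j)=\chi(p_iap_j)1_H=\tilde\chi(a)1_H$, so that $\tilde\chi$ is an invariant functional on $A$ with $\tilde\chi|_{A_{ij}}=\chi$.

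With this in hand, I would fix once and for all a left complete invariant functional $\psi_A$ on $A$, which exists by Lemma~\ref{LemInvpsi}, and let $\chi$ be an arbitrary invariant functional on $A_{ij}$. Applying Lemma~\ref{LemScale} to the pair $(\psi_A,\tilde\chi)$ produces a scalar $c_j\in k$ with $\tilde\chi_{ij}(x)=c_j\psi_{ij}(x)$ for all $x\in A$; evaluating on $x\in A_{ij}$, where $p_ixp_j=x$, gives $\chi=c_j\,(\psi_A|_{A_{ij}})$. Thus every invariant functional on $A_{ij}$ is a scalar multiple of the fixed functional $\psi_A|_{A_{ij}}$, and the space of invariant functionals on $A_{ij}$ is therefore at most one-dimensional.

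Since Lemma~\ref{LemScale} already carries the real weight, I do not expect a genuine obstacle here. The two points that need care are the verification that the extension $\tilde\chi$ is invariant on the whole of $A$ --- this is precisely where coinvariance of the idempotents $p_i$ enters --- and keeping straight which of the two functionals in Lemma~\ref{LemScale} must be the left complete one (it is the first argument, so one must feed in $\psi_A$ there and $\tilde\chi$ as the arbitrary second functional).
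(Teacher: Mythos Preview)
Your argument is correct and is exactly the intended one: the paper states the corollary without proof immediately after Lemma~\ref{LemScale}, and your extension--restriction step (using coinvariance of $p_i,p_j$) together with one application of that lemma to $(\psi_A,\tilde\chi)$ is precisely what makes it immediate. Your care about which functional must be left complete is also right.
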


\begin{Theorem}\label{TheoUnique}
Let $\psi_A$ be a left complete invariant functional. Then there exists a bijection $\mu: I \rightarrow I$ such that $\psi_{ij} \neq 0$ iff $j = \mu(i)$. Moreover, if $\widetilde{\psi}$ is any other invariant functional on $A$, there exist $c_i \in k$ such that 
\[
\widetilde{\psi}_{ij} = \delta_{j,\mu(i)} c_i \psi_{ij}.
\]
\end{Theorem}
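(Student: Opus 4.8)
The plan is to exploit the surjectivity established in Lemma \ref{LemTheta} together with the one-dimensionality of the space of invariant functionals on each $A_{ij}$ (the Corollary following Lemma \ref{LemScale}). First I would fix the left complete invariant functional $\psi_A$ and, for each $i \in I$, consider the nonzero functional $\psi_{i-} = \psi_A(p_i-)$ on $A$, which is invariant and supported on $\widetilde{N}_i = \oplus_j A_{ij}$. I claim that for each $i$ there is exactly one index $j$ with $\psi_{ij} \neq 0$. Existence of at least one such $j$ is just left completeness. For uniqueness, suppose $\psi_{ij} \neq 0$ and $\psi_{ij'} \neq 0$ with $j \neq j'$. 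Both $\psi_{ij}$ and $\psi_{ij'}$ restrict to nonzero invariant functionals on the respective (equivariantly simple, by Lemma \ref{LemMax}) right $A_{jj}$- resp. $A_{j'j'}$-modules $A_{ij}$, $A_{ij'}$. The key point is to show that two distinct $j$'s cannot both be ``hit'': I would argue via the connectedness and the Morita equivalence $A_{jj} \sim A_{j'j'}$ implemented by $(A_{jj'}, A_{j'j})$, transporting an invariant functional on $A_{ij}$ to one on $A_{ij'}$ and using that invariant functionals on a fixed $A_{ij'}$ form at most a one-dimensional space, to get a contradiction with the independence forced by $j \neq j'$. Concretely: if $\psi_{ij'}\neq 0$ then, composing with multiplication $A_{ij}\otimes A_{jj'}\to A_{ij'}$ and using invariance together with faithfulness of $\varphi_{i}$ (Lemma \ref{FaithfulPhi}), one deduces $\psi_{ij}$ would have to be proportional to a functional manifestly supported away from $A_{ij}$, forcing $\psi_{ij}=0$. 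This defines a map $\mu: I \to I$, $\mu(i) = $ the unique $j$ with $\psi_{ij}\neq 0$.

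Next I would show $\mu$ is a bijection. Surjectivity of $\mu$: given $j \in I$, by the surjectivity of $\theta$ in Lemma \ref{LemTheta} (valid since $\psi_A$ is left complete) every element of $A$, in particular every nonzero element of some $A_{ij}$, lies in the image of $\theta$; tracing through the defining relation $\varphi_A(x\theta(w)) = \psi_A(xw)$ with $x \in A_{ji}$ shows that $\psi_A$ must pair nontrivially with $A_{ij}$, so $j = \mu(i)$ for that $i$. Injectivity of $\mu$: if $\mu(i) = \mu(i') = j$ with $i \neq i'$, then both $\psi_{ij}$ and $\psi_{i'j}$ are nonzero invariant functionals, and by the symmetric version of Lemma \ref{LemScale} (the ``right complete'' statement, applied to the fixed left complete $\psi_A$ viewed appropriately, or directly by a left-right mirror of the argument above using $\varphi_j$ faithful on $A_{i'j}\times A_{ji'}$) one gets a contradiction in the same way. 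Alternatively, a clean counting argument: $\sum_j \dim_k(\text{invariant functionals on } A_{ij}) = 1$ for each $i$ and symmetrically $= 1$ for each $j$, and since the total over all $(i,j)$ is $|I|$ computed two ways, $\mu$ must be a bijection. I would phrase this via the at-most-one-dimensionality Corollary rather than literal dimensions to accommodate infinite $I$.

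Finally, for the uniqueness-up-to-scalars statement: let $\widetilde{\psi}$ be any invariant functional on $A$. For each $i$, $\widetilde{\psi}_{i-}$ is an invariant functional supported on $\widetilde{N}_i$, hence (again by the at-most-one-dimensional Corollary applied summand by summand together with the mutual non-isomorphism of the $A_{ij}$ as equivariant $A_{ii}$-modules from Lemma \ref{LemMax}, so an invariant functional on $\widetilde N_i$ is determined by its components and each component space is $\le 1$-dimensional) $\widetilde{\psi}_{ij}$ is a scalar multiple $c_i^{(j)}\psi_{ij}$ when $j=\mu(i)$, and I must show $\widetilde{\psi}_{ij} = 0$ for $j \neq \mu(i)$. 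But that is exactly the uniqueness of the support index $\mu(i)$ argued in the first paragraph, applied now to $\widetilde\psi$ in place of $\psi_A$ -- except $\widetilde\psi$ need not be left complete, so instead I use: if $\widetilde{\psi}_{ij'} \neq 0$ for some $j' \neq \mu(i)$, then multiplying by $A_{j'j'}$ and comparing with $\psi_A$ via Lemma \ref{LemScale}-type reasoning forces $\psi_{ij'} \neq 0$, contradicting $\mu(i) = j \neq j'$. Setting $c_i = c_i^{(\mu(i))}$ gives $\widetilde{\psi}_{ij} = \delta_{j,\mu(i)} c_i \psi_{ij}$.

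The main obstacle I anticipate is the uniqueness of the support index $\mu(i)$ -- i.e. ruling out $\psi_{ij} \neq 0 \neq \psi_{ij'}$ for $j \neq j'$ -- since this is where one genuinely needs the interplay of the Galois condition (surjectivity of $\can$ and $\widetilde{\can}$), the faithfulness of the $\varphi_i$, and connectedness, rather than just soft Morita-theoretic bookkeeping; getting the comparison of the two nonzero components into a form where faithfulness of some $\varphi_i$ applies cleanly will require a careful choice of test elements in $A_{jj'}$ and $A_{j'j}$ and an application of \eqref{EqLocalUnit}.
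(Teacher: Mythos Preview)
Your proposal has a genuine gap at precisely the point you yourself flag as the main obstacle: establishing that for each $i$ there is \emph{at most one} $j$ with $\psi_{ij}\neq 0$. The argument you sketch (``transport an invariant functional on $A_{ij}$ to one on $A_{ij'}$ via the Morita context $(A_{jj'},A_{j'j})$ and derive a contradiction from one-dimensionality'') does not close. Composing $\psi_{ij'}$ with the multiplication $A_{ij}\otimes A_{jj'}\to A_{ij'}$ indeed produces, for each fixed $b\in A_{jj'}$, an invariant functional on $A_{ij}$, hence a scalar multiple of $\psi_{ij}$; but this only says $\psi_{ij'}(ab)=\lambda(b)\psi_{ij}(a)$, which is no contradiction. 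Nothing in that line of reasoning forces $\psi_{ij}=0$, and faithfulness of $\varphi_i$ does not obviously enter.

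The paper avoids this difficulty by a simple trick you are not using: rather than comparing the two putative columns $j,j'$ directly, it first \emph{chooses} an arbitrary $\mu(i)$ with $\psi_{i,\mu(i)}\neq 0$, then defines the auxiliary invariant functional $\widetilde{\psi}_A=\sum_i\psi_{i,\mu(i)}$, which is left complete by construction and has support exactly on the graph of $\mu$. One then applies Lemma~\ref{LemScale} with the roles swapped ($\widetilde{\psi}_A$ as the left complete functional, $\psi_A$ as the other invariant functional): this yields $\psi_{ij}=c_j\widetilde{\psi}_{ij}$ for all $i,j$, which vanishes for $j\neq\mu(i)$. That is the entire uniqueness-of-support argument, and it uses only Lemma~\ref{LemScale} as a black box.

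Your bijectivity arguments also need repair. Your surjectivity argument via the surjectivity of $\theta$ from Lemma~\ref{LemTheta} does not give what you want: tracing $\varphi_A(x\theta(w))=\psi_A(xw)$ with $x\in A_{j-}$ only tells you $\psi_{j-}\neq 0$ (left completeness again), not $\psi_{-j}\neq 0$. The paper instead brings in an auxiliary \emph{right} complete invariant functional $\psi_A'$ (whose existence is Lemma~\ref{LemInvpsi}) and applies Lemma~\ref{LemScale} once more: if some $k$ were missed by $\mu$, then $\psi_{-k}=0$, but $\psi'_{ik}\neq 0$ for some $i$ would then be a scalar multiple of $\psi_{ik}=0$, a contradiction. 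Injectivity is handled symmetrically via the analogous map $\nu$ attached to $\psi_A'$. Your ``counting'' alternative is not available for infinite $I$. Once uniqueness of support and bijectivity of $\mu$ are in hand, your final paragraph is essentially correct and matches the paper: for arbitrary $\widetilde{\psi}$, apply Lemma~\ref{LemScale} with $\psi_A$ left complete and $\psi_A'=\widetilde{\psi}$ to get $\widetilde{\psi}_{ij}=c_j\psi_{ij}$, which vanishes off the graph of $\mu$.
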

\begin{proof}
Pick an arbitrary $\mu: I \rightarrow I$ such that $\psi_{i,\mu(i)}\neq 0$, and put
\[
\widetilde{\psi}_A = \sum_i \psi_{i,\mu(i)}.
\]
Then one can apply Lemma \ref{LemScale} with $\psi_A$ replaced by $\widetilde{\psi}_A$ and $\psi_A'$ replaced by $\psi_A$, to conclude that 
\[
\psi_{ij} = c_j \widetilde{\psi}_{ij},
\]
for certain $c_j \in k$. In particular, $\psi_{ij}= 0$ unless $j= \mu(i)$. 

Let us show that $\mu$ is bijective. If $\mu$ were not surjective, there exists $k\in I$ with $\psi_{-k} = 0$. Pick however $i\in I$ with $\psi_{ik}' \neq 0$ for some right complete invariant functional $\psi_A'$. Then from Lemma \ref{LemScale}, we obtain that $\psi_{ik}'$ is a multiple of $\psi_{ik} =0$, a contradiction. To see that $\mu$ is injective, note that the same argument as above implies the existence, for any right complete invariant functional $\psi_A'$, of a $\nu: I \rightarrow I$ such that $\psi_{ij}'\neq 0$ unless $i = \nu(j)$. From Lemma \ref{LemScale}, we conclude that there exist constants $d_i \in k$ such that
\[
\psi_{i,\mu(i)}(x) = d_i \psi_{i,\mu(i)}'(x) = d_i \delta_{i,\nu(\mu(i))} \psi_{i,\mu(i)}'(x).
\]
It follows that necessarily $i = \nu(\mu(i))$, hence $\mu$ is injective. 

The remaining uniqueness claim in the theorem is now clear.
\end{proof}

By considering the analogue of the above theorem for right complete invariant functionals, we deduce the following corollary.

\begin{Cor}
An invariant functional is left complete if and only if it is right complete. 
\end{Cor}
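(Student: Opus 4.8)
The plan is to deduce the corollary directly from Theorem~\ref{TheoUnique} together with its left--right mirror image. By Lemma~\ref{LemInvpsi} both a left complete and a right complete invariant functional exist, so Theorem~\ref{TheoUnique} and its right-handed counterpart are both non-vacuous; the counterpart asserts that for any right complete invariant functional $\psi_A'$ there is a bijection $\nu\colon I\to I$ with $\psi_{ij}'\neq 0$ if and only if $i=\nu(j)$. This counterpart is obtained by running the proof of Theorem~\ref{TheoUnique} verbatim with the roles of left and right interchanged: the ingredients of this section come in mirror pairs (the $A^{\alpha}$-valued pairing $\Phi$ acts on both sides, the $\varphi_i$ are two-sided, $\psi_A$ is paired with $\psi_A'$, $\theta$ with $\theta'$, and Lemma~\ref{LemScale} already contains both halves), and indeed the argument for bijectivity of $\mu$ inside the proof of Theorem~\ref{TheoUnique} literally produces such a $\nu$.

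Next I would record the elementary translation between vanishing of the components $\psi_{ij}$ and vanishing of $\psi_{i-}=\psi(p_i-)$ and $\psi_{-j}=\psi(-p_j)$. Since $A=\oplus_{k,l}A_{kl}$ (the direct sum decomposition available because $A$ has $H$-coinvariant local units), each $a\in A$ has only finitely many nonzero components $p_kap_l$, so $\psi_{-j}=\sum_{i}\psi_{ij}$ and $\psi_{i-}=\sum_{j}\psi_{ij}$ as pointwise finite sums of functionals; moreover $\psi_{ij}(a)=\psi_{-j}(p_ia)=\psi_{i-}(ap_j)$. Consequently $\psi_{-j}=0$ precisely when $\psi_{ij}=0$ for all $i$, and $\psi_{i-}=0$ precisely when $\psi_{ij}=0$ for all $j$.

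Now the corollary follows in two symmetric lines. If $\psi$ is a left complete invariant functional, Theorem~\ref{TheoUnique} gives a bijection $\mu\colon I\to I$ with $\psi_{ij}\neq 0\iff j=\mu(i)$; given $j\in I$, surjectivity of $\mu$ yields $i$ with $\mu(i)=j$, so $\psi_{ij}\neq 0$ and hence $\psi_{-j}\neq 0$, and since $j$ was arbitrary $\psi$ is right complete. Conversely, if $\psi$ is right complete, the mirror image of Theorem~\ref{TheoUnique} gives a bijection $\nu\colon I\to I$ with $\psi_{ij}\neq 0\iff i=\nu(j)$; for any $i\in I$, surjectivity of $\nu$ produces $j$ with $\nu(j)=i$, so $\psi_{ij}\neq 0$ and hence $\psi_{i-}\neq 0$, whence $\psi$ is left complete. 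The only real point requiring attention is the verification of the mirror image of Theorem~\ref{TheoUnique}, which is purely a matter of transporting the preceding lemmas along the left--right symmetry of the section; once that is granted, the corollary is immediate.
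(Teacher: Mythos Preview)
Your argument is correct and is exactly the approach the paper takes: the paper's proof consists of the single sentence ``By considering the analogue of the above theorem for right complete invariant functionals, we deduce the following corollary,'' and you have spelled out precisely this deduction, namely that the bijection $\mu$ from Theorem~\ref{TheoUnique} (and its mirror $\nu$) forces every row and every column of the array $(\psi_{ij})$ to contain a nonzero entry.
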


We will in the following hence speak of \emph{complete} invariant functionals. In the following, we will fix a complete invariant functional $\psi_A$ and associated bijective function $\mu: I \rightarrow I$ such that 
\[
\psi_{ij} = \delta_{i,\mu(i)}\psi_{i,\mu(i)},\qquad \psi_{i,\mu(i)}\neq 0.
\]
We will write $\kappa = \mu^{-1}$. Note that $\mu$ depends only on $(A,\alpha)$, as any other complete invariant functional $\psi_A'$ must by Theorem \ref{TheoUnique} be of the form 
\[
\psi_{ij}'= \delta_{j,\mu(i)}c_i \psi_{ij},\qquad c_i \in k^{\times}.
\]

\begin{Rem}
We do not know if necessarily $\mu  = \id$. Indeed, although we know of no counterexamples, we could not find an argument enforcing this condition. On the other hand if $(H,\Delta)$ has invariant \emph{integrals}, the identity $\mu = \id$ becomes immediate, as then $\varphi_i(p_i) = 1$ and hence $\psi_{ii}(x) = \varphi_i(x_{(0)})\psi(x_{(1)})$ is an invariant functional not vanishing on $p_i \in A_{ii}$.
\end{Rem}

\begin{Lem}
The functional $\psi_A$ is faithful.
\end{Lem}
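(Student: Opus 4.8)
The plan is to read off faithfulness of $\psi_A$ from the faithfulness of $\varphi_A$ proved above, combined with the surjectivity assertions in Lemma \ref{LemTheta}.

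First I would observe that, since $\psi_A$ has been fixed to be a \emph{complete} invariant functional, it is simultaneously left complete and right complete, by the corollary preceding this lemma. Hence both halves of Lemma \ref{LemTheta} are available: there is a surjective linear map $\theta: A \rightarrow A$ with $\varphi_A(x\theta(w)) = \psi_A(xw)$ for all $x,w \in A$, and --- taking $\psi_A$ in the role of the functional denoted $\psi_A'$ there, which is legitimate because $\psi_A$ is right complete --- a surjective linear map $\theta': A \rightarrow A$ with $\varphi_A(\theta'(x)y) = \psi_A(xy)$ for all $x,y \in A$.

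To establish non-degeneracy of the pairing $(a,a') \mapsto \psi_A(aa')$ in the first variable, I would take $a \in A$ with $\psi_A(aa') = 0$ for all $a' \in A$. Then $\varphi_A(a\theta(a')) = 0$ for all $a'$, and since $\theta(A) = A$ this gives $\varphi_A(ab) = 0$ for all $b \in A$, so $a = 0$ by faithfulness of $\varphi_A$. For the second variable, I would take $a' \in A$ with $\psi_A(aa') = 0$ for all $a \in A$; then $\varphi_A(\theta'(a)a') = 0$ for all $a$, and surjectivity of $\theta'$ yields $\varphi_A(ba') = 0$ for all $b \in A$, hence $a' = 0$.

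I do not expect a genuine obstacle here: all the substantive work was carried out in Lemma \ref{LemTheta} and in the proposition asserting faithfulness of $\varphi_A$. The one point that must not be overlooked is that one needs \emph{both} $\theta$ and $\theta'$, one for each slot of the pairing, so it is essential to invoke that a complete invariant functional is left complete and right complete at the same time.
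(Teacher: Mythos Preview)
Your argument is correct, and it is genuinely different from the paper's proof. You reduce faithfulness of $\psi_A$ directly to faithfulness of $\varphi_A$ via the surjectivity of $\theta$ and $\theta'$ from Lemma~\ref{LemTheta}, using only that a complete invariant functional is simultaneously left and right complete. The paper instead argues as follows: assuming $\psi_A(x-)=0$, it observes that every functional $w\mapsto (\varphi_A\otimes\psi)(\alpha(w)(y\otimes 1))$ is invariant (from the construction in Lemma~\ref{LemInvpsi}), invokes the uniqueness statement of Theorem~\ref{TheoUnique} to conclude these functionals vanish on $xA$, then uses surjectivity of the Galois map together with faithfulness of $\varphi_A$ and $\psi$ to force $\alpha(x)=0$. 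Your route is shorter and avoids Theorem~\ref{TheoUnique} altogether; the paper's route is more hands-on with the Galois structure but gains nothing extra here. Note also that you are careful to use only \emph{surjectivity} of $\theta,\theta'$ (established in Lemma~\ref{LemTheta}), not bijectivity --- the latter is proved only in the corollary \emph{after} the present lemma, so there is no circularity.
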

\begin{proof}
Pick $x\in A$ with $\psi_A(x-) =0$. From the construction in Lemma \ref{LemInvpsi}, we know that for any $y\in A$ the map
\[
w \mapsto (\varphi_A\otimes \psi)(\alpha(w)(y\otimes 1))
\]
is invariant. From the uniqueness in Theorem \ref{TheoUnique}, it follows that then necessarily 
\[
(\varphi_A\otimes \psi)(\alpha(xw)(y\otimes 1)) = 0,\qquad \forall w,y\in A.
\]
By surjectivity of the Galois map, this implies
\[
(\varphi_A\otimes \psi)(\alpha(x)(y\otimes h)) = 0,\qquad \forall y\in A,h\in H.
\]
As $\varphi_A$ and $\psi$ are faithful, we deduce $\alpha(x)=0$, and hence $x= 0$. 

The proof that $\psi_A(-y) =0$ implies $y=0$ is similar. 
\end{proof}

\begin{Cor}
The maps $\theta, \theta'$ in Lemma \ref{LemTheta} are bijective, and induce linear isomorphisms
\[
\theta: A_{ij} \mapsto A_{i\kappa(j)}, \qquad \theta': A_{ij} \mapsto A_{\mu(i),j}.
\]
\end{Cor}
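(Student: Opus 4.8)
The plan is to treat bijectivity first and then read the grading off the defining identities. Surjectivity of $\theta$ and $\theta'$ is already contained in Lemma \ref{LemTheta}, since the fixed $\psi_A$ is complete, hence both left and right complete (and I take $\theta'$ to be the one associated with this same $\psi_A$, which is right complete, so that the bijection $\mu$ in its grading formula is the one already fixed). For injectivity of $\theta$: if $\theta(w)=0$, then the defining identity $\varphi_A(x\theta(w))=\psi_A(xw)$ gives $\psi_A(xw)=0$ for all $x\in A$, i.e.\ $\psi_A(-w)=0$, so $w=0$ by the faithfulness of $\psi_A$ proved just above. Injectivity of $\theta'$ is symmetric, using $\varphi_A(\theta'(x)y)=\psi_A(xy)$ and the other half of the faithfulness statement.

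For the grading claim, fix $w\in A_{ij}$ and write $\theta(w)=\sum_{s,t}\theta(w)_{st}$ with $\theta(w)_{st}\in A_{st}$ (a finite sum, since $\theta(w)\in A=\bigoplus A_{st}$). The point is that for $x\in A_{kl}$ each side of $\varphi_A(x\theta(w))=\psi_A(xw)$ collapses to a single graded piece: on the left, $x\,\theta(w)_{st}$ is nonzero only for $s=l$, where it lies in $A_{kt}$, and $\varphi_A$ vanishes off the diagonal, so $\varphi_A(x\theta(w))=\varphi_k\!\big(x\,\theta(w)_{lk}\big)$; on the right, $xw\in A_{kl}A_{ij}$ vanishes unless $l=i$, where it lies in $A_{kj}$, and $\psi_A$ restricted to $A_{kj}$ equals $\psi_{kj}=\delta_{j,\mu(k)}\psi_{k,\mu(k)}$ by Theorem \ref{TheoUnique}, so $\psi_A(xw)\neq 0$ forces $l=i$ and $k=\kappa(j)$. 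Hence for every pair $(l,k)\neq(i,\kappa(j))$ one gets $\varphi_k(x\,\theta(w)_{lk})=0$ for all $x\in A_{kl}$, and the non-degeneracy of the pairing $A_{kl}\times A_{lk}\to A_{kk}$ from Lemma \ref{FaithfulPhi} forces $\theta(w)_{lk}=0$. Therefore $\theta(w)\in A_{i\kappa(j)}$, i.e.\ $\theta(A_{ij})\subseteq A_{i\kappa(j)}$. Running the same computation with left and right interchanged, starting from $\varphi_A(\theta'(x)y)=\psi_A(xy)$, yields $\theta'(A_{ij})\subseteq A_{\mu(i),j}$.

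It then remains to upgrade these inclusions to isomorphisms, which is purely formal. Since $\kappa$ (resp.\ $\mu$) is a bijection of $I$, the assignment $(i,j)\mapsto(i,\kappa(j))$ (resp.\ $(i,j)\mapsto(\mu(i),j)$) permutes the index set of the direct sum $A=\bigoplus_{i,j}A_{ij}$, so $\bigoplus_{i,j}A_{i\kappa(j)}=A$. As $\theta$ is surjective and $\theta(A_{ij})\subseteq A_{i\kappa(j)}$, we get $\sum_{i,j}\theta(A_{ij})=A=\bigoplus_{i,j}A_{i\kappa(j)}$; in a direct sum, a family of subspaces contained in the respective summands whose total sum is everything must coincide with those summands, whence $\theta(A_{ij})=A_{i\kappa(j)}$. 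Since $\theta$ is injective, its restriction $\theta\colon A_{ij}\to A_{i\kappa(j)}$ is a linear isomorphism, and likewise $\theta'\colon A_{ij}\to A_{\mu(i),j}$.

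I expect the only real obstacle to be bookkeeping: keeping the four labels $i,j,k,l$ straight while checking that each side of the defining identity isolates exactly one graded component, and invoking the correct instance of Lemma \ref{FaithfulPhi}. Beyond the faithfulness of $\psi_A$ (already available) and the support pattern $\psi_{ij}=\delta_{j,\mu(i)}\psi_{i,\mu(i)}$ of Theorem \ref{TheoUnique}, no genuinely new ingredient seems to be required.
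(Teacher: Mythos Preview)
Your argument is correct and matches the paper's approach for bijectivity essentially verbatim: surjectivity from Lemma \ref{LemTheta}, injectivity from the defining identity together with faithfulness of $\psi_A$. For the grading claim the paper simply asserts it is ``clear from their definition and faithfulness of $\psi_A$'', so your detailed verification via the support pattern of $\psi_A$ from Theorem \ref{TheoUnique} and the non-degeneracy of Lemma \ref{FaithfulPhi} is a legitimate unpacking of that sentence; if anything, one can shorten it slightly by noting that $\varphi_A(yp_k)=\varphi_A(p_ky)$ and arguing directly that $p_i\theta(w)=\theta(w)=\theta(w)p_{\kappa(j)}$ from faithfulness of $\varphi_A$, but your route is equally valid.
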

\begin{proof}
The surjectivity of the maps was already proven in Lemma \ref{LemTheta}.
For injectivity, suppose $\theta(y) = 0$.
Then for all $x\in A$
\[
0 = \varphi_A(x\theta(y)) = \psi_A(xy).
\] 
This implies that $y = 0$, by faithfulness of $\psi_A$. The proof for $\theta'$ is completely analogous. The behaviour of $\theta,\theta'$ with respect to the components is clear from their definition and faithfulness of $\psi_A$.
\end{proof}

\begin{Def}
For each $i\in I$, we define 
\[
\delta_i =\theta(p_i) \in A_{i,\kappa(i)},\qquad \delta_i' = \theta'(p_i) \in A_{\mu(i),i}.
\]
We can then make sense of 
\[
\delta_A = \sum_i \delta_i \in M(A),\qquad \delta_A' = \sum_i \delta_i'\in M(A).
\]
We call $\delta_A$ the \emph{modular element} of $A$, and $\delta_A'$ the \emph{associated modular element}.
\end{Def}

We obtain by construction that 
\[
\psi_A(x) = \varphi_A(x\delta_A) = \varphi_A(\delta_A'x),\qquad \forall x\in A,
\]
so that 
\[
\theta(x) = x\delta_A,\qquad \theta'(x) = \delta_A'x.
\]
Bijectivity of $\theta,\theta'$ guarantees then that $\delta_A,\delta_A'$ are invertible.

\begin{Lem}
The elements $\delta_A,\delta_A'$ satisfy
\[
\alpha(\delta_A) = \delta_A\otimes \delta,\qquad \alpha(\delta_A') = \delta_A'\otimes \delta.
\]
\end{Lem}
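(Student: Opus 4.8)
The plan is to reduce the asserted equalities of multipliers to identities between elements of $A\otimes H$, and to verify those by testing against the faithful functional $\varphi_A$.

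Since $(A,\alpha)$ has $H$-coinvariant local units, $A\otimes H$ is a non-degenerate left and right $A$-module under $a\cdot z=\alpha(a)z$ and $z\cdot a=z\alpha(a)$ (choosing the local units to be coinvariant, so that they act on $A\otimes H$ as $e\otimes 1$); hence $\alpha$ extends to an algebra homomorphism $M(A)\to M(A\otimes H)$, and $\alpha(\delta_A)=\delta_A\otimes\delta$ is equivalent to
\[
\alpha(a\delta_A)=\alpha(a)(\delta_A\otimes\delta)\qquad(a\in A),
\]
while $\alpha(\delta_A')=\delta_A'\otimes\delta$ is equivalent to $\alpha(\delta_A'a)=(\delta_A'\otimes\delta)\alpha(a)$ for all $a\in A$. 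Note that $a\delta_A=\theta(a)$ and $\delta_A'a=\theta'(a)$ lie in $A$, so both sides make sense. Recall also that $\varphi_A(x\theta(w))=\psi_A(xw)$ and $\varphi_A(\theta'(x)y)=\psi_A'(xy)$, that $\psi_A,\psi_A'$ are invariant, and that $\varphi_A$ is faithful and $\delta$-invariant.

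For the first identity, faithfulness of $\varphi_A$ lets me argue it suffices to show that $\varphi_A(b\,\cdot\,)\otimes\id_H$ sends both sides to the same element of $H$, for every $b\in A$. On the right-hand side, writing $\alpha(a)(\delta_A\otimes\delta)=\theta(a_{(0)})\otimes a_{(1)}\delta$ and using $\varphi_A(b\theta(a_{(0)}))=\psi_A(ba_{(0)})$ yields $\psi_A(ba_{(0)})\,a_{(1)}\delta$. On the left-hand side, I apply the identity from the Remark following the definition of the Galois map, $xy_{(0)}\otimes y_{(1)}=(x_{(0)}y)_{(0)}\otimes S^{-1}(x_{(1)})(x_{(0)}y)_{(1)}$, with $x=b$ and $y=\theta(a)$; then $\delta$-invariance of $\varphi_A$ and a second use of $\varphi_A(x\theta(w))=\psi_A(xw)$ turn $\varphi_A(b\theta(a)_{(0)})\,\theta(a)_{(1)}$ into $S^{-1}(b_{(1)})\,\psi_A(b_{(0)}a)\,\delta$. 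So everything reduces to the purely Hopf-algebraic identity
\[
\psi_A(ba_{(0)})\,a_{(1)}=S^{-1}(b_{(1)})\,\psi_A(b_{(0)}a),
\]
which follows by applying $\psi_A\otimes\id$ to the same Remark identity (with $x=b$, $y=a$) and invoking invariance of $\psi_A$ on $b_{(0)}a$.

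The identity $\alpha(\delta_A')=\delta_A'\otimes\delta$ is proved symmetrically, after recording the mirror identity $y_{(0)}x\otimes y_{(1)}=(yx_{(0)})_{(0)}\otimes(yx_{(0)})_{(1)}S(x_{(1)})$ (checked in one line exactly as the Remark's, with $S$ in place of $S^{-1}$): one tests $\alpha(\delta_A'a)=(\delta_A'\otimes\delta)\alpha(a)$ against $\varphi_A(\,\cdot\,b)\otimes\id_H$ using right faithfulness of $\varphi_A$, reducing to $\psi_A'(a_{(0)}b)\,a_{(1)}=\psi_A'(ab_{(0)})\,S(b_{(1)})$, which in turn follows from invariance of $\psi_A'$ applied to the mirror identity. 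The one point requiring care is the opening reduction — that $\alpha$ genuinely extends to multipliers and that the multiplier equalities are equivalent to the displayed element-level identities; this is exactly where the $H$-coinvariant local units are used. Everything after that is routine Sweedler-notation bookkeeping.
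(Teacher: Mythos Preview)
Your argument is correct. The reduction to $\alpha(a\delta_A)=\alpha(a)(\delta_A\otimes\delta)$ via the multiplier extension is fine (coinvariant local units give you exactly the non-degeneracy you need), and the chain
\[
\varphi_A(b\,\theta(a)_{(0)})\,\theta(a)_{(1)}
= S^{-1}(b_{(1)})\,\varphi_A(b_{(0)}\theta(a))\,\delta
= S^{-1}(b_{(1)})\,\psi_A(b_{(0)}a)\,\delta
= \psi_A(ba_{(0)})\,a_{(1)}\,\delta
\]
goes through as you describe, using $\delta$-invariance of $\varphi_A$, the defining relation $\varphi_A(\,\cdot\,\theta(w))=\psi_A(\,\cdot\,w)$, and finally invariance of $\psi_A$ applied to the Remark's Sweedler identity. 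The mirror case for $\delta_A'$ also checks out.

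The paper, however, takes a shorter route: it tests $\alpha(x\delta_A)$ and $\alpha(x)(\delta_A\otimes\delta)$ against the full functional $\varphi_A\otimes\varphi$ (after multiplying by $1\otimes h$), and uses simultaneously the relations $\psi_A=\varphi_A(\,\cdot\,\delta_A)$ on $A$ and $\psi=\varphi(\,\cdot\,\delta)$ on $H$. Both sides collapse immediately to $\psi_A(x)\psi(h)$ by $\delta$-invariance of $\varphi_A$ and invariance of $\psi_A$; surjectivity of $x\otimes h\mapsto(1\otimes h)\alpha(x)$ and faithfulness of $\varphi_A\otimes\varphi$ finish the job in four lines. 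Your approach tests only in the $A$-leg and then has to prove a residual identity in $H$ via a separate Sweedler manipulation; it works, but the paper's symmetric use of $\varphi$ on $H$ makes that detour unnecessary.
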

\begin{proof}
For $x\in A,h\in H$ we compute
\begin{eqnarray*}
(\varphi_A\otimes \varphi)((1\otimes h)\alpha(x\delta_A)) &=& \varphi_A(x\delta_A)\varphi(h\delta) \\&=&  \psi_A(x)\psi(h)\\  
&=& (\psi_A\otimes \psi)((1\otimes h)\alpha(x))\\
&=& (\varphi_A\otimes \varphi)((1\otimes h)\alpha(x)(\delta_A\otimes \delta)).
\end{eqnarray*}
As $\varphi_A,\varphi$ are faithful, and the map 
\[
h\otimes x \rightarrow (1\otimes h)\alpha(x)
\]
is surjective onto $A\otimes H$, it follows that $\alpha(\delta_A) = \delta_A\otimes \delta$. The proof for $\delta_A'$ is similar.
\end{proof}
\begin{Cor}
There exist unique scalars $\nu_i$, only depending on $(A,\alpha)$, such that
\[
\delta_i' = \nu_i \delta_{\mu(i)}. 
\]
\end{Cor}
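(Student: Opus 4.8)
The plan is to extract the identity from the single relation $\delta_i' = u\,\delta_{\mu(i)}$, where $u := \delta_A'\delta_A^{-1}$ is shown to be an $\alpha$-coinvariant multiplier, together with the fact that $A^{\alpha}\cong k_I$ is concentrated on the diagonal corners. First I would record the corner bookkeeping. Since $\kappa = \mu^{-1}$, the corollary immediately above places $\delta_{\mu(i)} = \theta(p_{\mu(i)})$ in $A_{\mu(i),\kappa(\mu(i))} = A_{\mu(i),i}$, which is precisely the subspace where $\delta_i' = \theta'(p_i)$ already lives. From $\theta(x) = x\delta_A$ we have $\delta_j = p_j\delta_A \in A_{j,\kappa(j)}$ for every $j$, so the component $\delta_j p_i = p_j\delta_A p_i$ vanishes unless $\kappa(j) = i$; summing over $j$ gives $\delta_A p_i = \delta_{\mu(i)}$. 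Likewise $\delta_i' = \theta'(p_i) = \delta_A' p_i$.

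Next I would produce the coinvariant multiplier. Since $\delta_A$ is invertible in $M(A)$, set $u := \delta_A'\delta_A^{-1}$, so $\delta_A' = u\delta_A$. The Lemma just proved gives $\alpha(\delta_A) = \delta_A\otimes\delta$ and $\alpha(\delta_A') = \delta_A'\otimes\delta$; since $\delta\in H$ is grouplike, hence invertible, multiplicativity of $\alpha$ forces $\alpha(\delta_A^{-1}) = \delta_A^{-1}\otimes\delta^{-1}$, and therefore
\[
\alpha(u) = (\delta_A'\otimes\delta)(\delta_A^{-1}\otimes\delta^{-1}) = u\otimes 1 .
\]
So $u$ is a coinvariant multiplier, and for each $j$ the element $u p_j\in A$ satisfies $\alpha(u p_j) = u p_j\otimes 1$, i.e. $u p_j\in A^{\alpha} = \bigoplus_l k p_l$. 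Now combine: $\delta_i' = \delta_A' p_i = u\,\delta_A p_i = u\,\delta_{\mu(i)} = u\,p_{\mu(i)}\delta_{\mu(i)}$, the last step because $\delta_{\mu(i)}\in A_{\mu(i),i} = p_{\mu(i)}Ap_i$. Writing $u\,p_{\mu(i)} = \sum_l \lambda_l p_l$ and multiplying on the right by $\delta_{\mu(i)}$, every summand with $l\neq\mu(i)$ dies (again since $\delta_{\mu(i)}\in p_{\mu(i)}Ap_i$), so $\delta_i' = \lambda_{\mu(i)}\,\delta_{\mu(i)}$. Setting $\nu_i := \lambda_{\mu(i)}$ proves existence, and uniqueness is immediate from $\delta_{\mu(i)} = \theta(p_{\mu(i)})\neq 0$.

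For the assertion that $\nu_i$ depends only on $(A,\alpha)$: the sole choice made was that of a complete invariant functional $\psi_A$, and by Theorem \ref{TheoUnique} any other has components $\psi_{ij}\mapsto \delta_{j,\mu(i)}c_i\psi_{ij}$ with $c_i\in k^{\times}$ and with the same bijection $\mu$. A short computation from the defining relations $\varphi_A(x\theta(w)) = \psi_A(xw)$, $\varphi_A(\theta'(x)y) = \psi_A(xy)$ together with faithfulness of $\varphi_A$ shows this replaces $\delta_{\mu(i)}$ by $c_i\delta_{\mu(i)}$ and $\delta_i'$ by $c_i\delta_i'$, so the relation $\delta_i' = \nu_i\delta_{\mu(i)}$, hence $\nu_i$, is unchanged. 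I expect the only genuinely delicate point to be the multiplier/corner bookkeeping — cutting $\delta_A$, $\delta_A'$ and $u$ down by the $p_j$ and keeping track of the induced bigrading of $A = \bigoplus_{l,m}A_{lm}$ — while everything else is a one-line consequence of homogeneity of the $A_{jj}$, the identification $A^{\alpha}\cong k_I$, and faithfulness of $\varphi_A$.
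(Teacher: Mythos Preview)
Your proof is correct and follows essentially the same approach as the paper: both form a coinvariant multiplier from $\delta_A$ and $\delta_A'$ (the paper uses $\delta_A^{-1}\delta_A'$ cut down by $p_i$ on both sides, you use $\delta_A'\delta_A^{-1}$ cut down by $p_{\mu(i)}$), then invoke $A^{\alpha}\cong k_I$ to extract the scalar. Your treatment of the independence of $\nu_i$ from the choice of $\psi_A$ is in fact more explicit than the paper's, which simply cites Theorem~\ref{TheoUnique} without spelling out the scaling computation you carried through.
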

\begin{proof}
By the previous lemma, the components $p_i\delta_A^{-1}\delta_A' p_i$ are coinvariant, and hence of the form $\nu_i p_i$ for some $\nu_i \in k^{\times}$. The independence of $\nu$ with respect to $\psi_A$ follows from Theorem \ref{TheoUnique}.
\end{proof}

\subsection{The modular automorphisms}

We still have fixed an $I$-Galois object $(A,\alpha)$. In this section, our aim is to construct an automorphism of the algebra $A$, satisfying the following property for all $a,b\in A$:
$$\varphi_A(ab) = \varphi_A(b\sigma(a)).$$

In the following, let us denote
\[
\beta_i: H \rightarrow A\otimes A,\qquad h \mapsto (\splitting \circ \can^{-1})(p_i\otimes h) = h^{[1;i]}\otimes h^{[2;i]}.
\]
\begin{Lem}
For all $h\in H, x\in A, i\in I$ the following relations holds:
\begin{eqnarray}
h^{[1;i]}\Phi(h^{[2;i]}x) &=& \varphi(hx_{(1)})p_ix_{(0)},\label{eig1}\\
\Phi(xh^{[1;i]})h^{[2;i]} &=& \varphi(x_{(1)}S(h))x_{(0)}p_i.\label{eig2}
\end{eqnarray}
\end{Lem}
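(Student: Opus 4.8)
The plan is to derive both identities from two structural properties of the pair $\beta_i(h)=h^{[1;i]}\otimes h^{[2;i]}=(\splitting\circ\can^{-1})(p_i\otimes h)$, using only that $\Phi=(\id\otimes\varphi)\circ\alpha$ and that $\alpha$ is an algebra homomorphism. First I would record that, since $\can$ factors through the quotient $A\otimes A\to A\otimes_{A^\alpha}A$ (of which $\splitting$ is a section onto $\oplus_{i,j,k}(A_{ij}\otimes A_{jk})$) as $a\otimes b\mapsto ab_{(0)}\otimes b_{(1)}$, applying $\can$ to $\can^{-1}(p_i\otimes h)$ yields
\[
h^{[1;i]}(h^{[2;i]})_{(0)}\otimes (h^{[2;i]})_{(1)}=p_i\otimes h .
\]

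Secondly, using the right Galois map $\widetilde{\can}$ of \eqref{EqGalOth}, I would establish for all $x\in A,\ h\in H$ the identity $\widetilde{\can}\big(\can^{-1}(x\otimes h)\big)=x_{(0)}\otimes x_{(1)}S(h)$: writing $\can^{-1}(x\otimes h)=\sum p\otimes q$, so that $\sum p\,q_{(0)}\otimes q_{(1)}=x\otimes h$, one applies $\alpha\otimes\id$ and coassociativity of the coaction to obtain $\sum p_{(0)}q_{(0)}\otimes p_{(1)}q_{(1)}\otimes q_{(2)}=x_{(0)}\otimes x_{(1)}\otimes h$, then applies $\id\otimes\id\otimes S$, multiplies the two $H$-legs, and invokes the antipode axiom together with the comodule counit; since $\widetilde{\can}(p\otimes q)=p_{(0)}q\otimes p_{(1)}$, the result is exactly the claimed identity. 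Specializing to $x=p_i$ (which is coinvariant, so $x_{(0)}\otimes x_{(1)}=p_i\otimes 1$) gives the second basic identity
\[
(h^{[1;i]})_{(0)}h^{[2;i]}\otimes (h^{[1;i]})_{(1)}=p_i\otimes S(h) .
\]

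Granting these two identities, \eqref{eig1} and \eqref{eig2} each follow by a single substitution. For \eqref{eig1}: since $\alpha$ is multiplicative, $\Phi(h^{[2;i]}x)=(h^{[2;i]})_{(0)}x_{(0)}\,\varphi\big((h^{[2;i]})_{(1)}x_{(1)}\big)$, hence
\[
h^{[1;i]}\Phi(h^{[2;i]}x)=h^{[1;i]}(h^{[2;i]})_{(0)}\,x_{(0)}\,\varphi\big((h^{[2;i]})_{(1)}x_{(1)}\big),
\]
and applying the linear map $A\otimes H\to A$, $b\otimes g\mapsto b\,x_{(0)}\,\varphi(g\,x_{(1)})$, to the first basic identity turns the right-hand side into $\varphi(hx_{(1)})\,p_i x_{(0)}$. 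For \eqref{eig2}: likewise $\Phi(xh^{[1;i]})=x_{(0)}(h^{[1;i]})_{(0)}\,\varphi\big(x_{(1)}(h^{[1;i]})_{(1)}\big)$, so
\[
\Phi(xh^{[1;i]})h^{[2;i]}=x_{(0)}\,(h^{[1;i]})_{(0)}h^{[2;i]}\,\varphi\big(x_{(1)}(h^{[1;i]})_{(1)}\big),
\]
and feeding the second basic identity into the linear map $b\otimes g\mapsto x_{(0)}b\,\varphi(x_{(1)}g)$ turns the right-hand side into $\varphi(x_{(1)}S(h))\,x_{(0)}p_i$.

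The main obstacle will be the second basic identity, i.e.\ the relation $\widetilde{\can}\circ\can^{-1}(x\otimes h)=x_{(0)}\otimes x_{(1)}S(h)$ tying the left and right Galois maps together; this is the only step in which the antipode and the comodule coassociativity genuinely enter, and it is closely related to the computation recorded in the Remark following \eqref{EqGalOth}. Once it is in place, the factorization of $\can$ through the quotient, the behaviour under $\splitting$, and the two substitutions above are all routine.
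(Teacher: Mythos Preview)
Your proof is correct and follows essentially the same route as the paper: the paper proves \eqref{eig1} by the same direct computation (phrased as applying $\can$ back to $h^{[1;i]}\otimes h^{[2;i]}$ inside $(\id\otimes\varphi)(-\cdot\alpha(x))$), and for \eqref{eig2} it invokes exactly your second basic identity $(h^{[1;i]})_{(0)}h^{[2;i]}\otimes (h^{[1;i]})_{(1)}=p_i\otimes S(h)$, calling it an ``easily derived formula'' without spelling out the $\widetilde{\can}\circ\can^{-1}$ argument that you provide in detail.
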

\begin{proof}
Take any $h\in H, x\in A, i\in I$ and calculate:
\begin{eqnarray*}
h^{[1;i]}\Phi(h^{[2;i]}x) &=& h^{[1;i]}\left((\operatorname{id}\otimes\varphi) \alpha(h^{[2;i]}x)\right)\\
&=& (\operatorname{id}\otimes\varphi)\left(\operatorname{can}(h^{[1;i]}\otimes h^{[2;i]})\alpha(x)\right)\\
&=& ((\operatorname{id}\otimes \varphi)(p_ix_{(0)}\otimes hx_{(1)})\\
&=& \varphi(hx_{(1)})p_ix_{(0)}.
\end{eqnarray*}
The second equality follows similarly, using the easily derived formula
\[
h^{\lbrack 1;i\rbrack}_{\;(0)} h^{\lbrack 2;i\rbrack} \otimes h^{\lbrack 1;i\rbrack}_{\;(1)} = p_i \otimes S(h).
\]
\end{proof}
Note now that $$h^{[1;i]}\otimes h^{[2;i]}\in \bigoplus_{j\in I} A_{ij}\otimes A_{ji},$$ since $\can$ commutes with left multiplication with $p_i$ in the first leg, and right multiplication with $p_i$ in the second leg. 
\begin{Prop}
There exists an automorphism $$\sigma_A:A\to A,$$
such that for all $x,y\in A$
\[
\varphi_A(xy) = \varphi_A(y\sigma_A(x)).
\]
\end{Prop}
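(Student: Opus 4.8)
\emph{The plan} is to dispose first of the purely formal part and then to isolate the one delicate point.

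\emph{Formal reductions.} Since $\varphi_A$ is faithful, for each $a\in A$ there is at most one element $\sigma_A(a)\in A$ with $\varphi_A(ay)=\varphi_A(y\sigma_A(a))$ for all $y\in A$; uniqueness makes $a\mapsto\sigma_A(a)$ automatically linear, the identity $\varphi_A((xy)z)=\varphi_A(x(yz))$ together with faithfulness forces $\sigma_A(xy)=\sigma_A(x)\sigma_A(y)$, and $\sigma_A(a)=0$ forces $a=0$. Hence, once it is defined everywhere, $\sigma_A$ is an injective algebra endomorphism; surjectivity will follow by running the symmetric construction (producing, for each $a$, an $a''$ with $\varphi_A(ya)=\varphi_A(a''y)$), which yields a two-sided inverse. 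Finally $\alpha$ restricts to each block $A_{ij}=p_iAp_j$ and $\varphi_A$ is supported on the diagonal blocks with $\varphi_A|_{A_{ii}}=\varphi_i$, so a degree count shows $\sigma_A$ must carry $A_{ij}$ into $A_{ij}$. Thus everything reduces to: for each $i,j$ and each $a\in A_{ij}$, find $a'\in A_{ij}$ with $\varphi_i(ab)=\varphi_j(ba')$ for all $b\in A_{ji}$.

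\emph{A candidate.} Fix $a\in A_{ij}$. For $h\in H$ write the $A_{ij}\otimes A_{ji}$-component of $(\splitting\circ\can^{-1})(p_i\otimes h)$ as $\sum_r h^{[1;i]}_r\otimes h^{[2;i]}_r$ with $h^{[1;i]}_r\in A_{ij}$, $h^{[2;i]}_r\in A_{ji}$, and put $a'_h:=\sum_r\varphi_i\!\big(a\,h^{[2;i]}_r\big)h^{[1;i]}_r\in A_{ij}$. Multiplying \eqref{eig2} (with $x=b$) on the left by $a$, applying $\varphi_i$, and using the $A^{\alpha}$-bimodularity of $\Phi$ and the definition of $\varphi_i$, one obtains
\[
\varphi_j(b\,a'_h)=\varphi_i\!\big(a\,(\id\otimes\varphi(-S(h)))\alpha(b)\big),\qquad\forall b\in A_{ji}.
\]
Now $(\id\otimes\varphi(-S(h)))\alpha(b)=\varphi(-S(h))\cdot b$ is the action on $b$ of the element $\varphi(-S(h))\in\hat H$, viewing $A_{ji}$ as a unital $\hat H$-module as in Lemma~\ref{LemDualModComod}, and as $h$ ranges over $H$ this element ranges over all of $\hat H$. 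Hence for any finite-dimensional subcomodule $V\subseteq A_{ji}$ there is $h_V\in H$ with $\varphi(-S(h_V))$ acting as the identity on $V$, and then $a'_{h_V}$ represents the functional $b\mapsto\varphi_i(ab)$ on $V$ in the required form $b\mapsto\varphi_j(b\,a'_{h_V})$.

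\emph{Consistency, and the obstacle.} The heart of the matter is to show that these local representatives come from a \emph{single} $a'\in A_{ij}$, i.e. that the functional $b\mapsto\varphi_i(ab)$ lies in the image of the (injective, by Lemma~\ref{FaithfulPhi}) map $A_{ij}\to A_{ji}^{*}$, $a'\mapsto\varphi_j(-\,a')$. I would establish this by proving that the subspaces $\{\varphi_i(a\,-)\mid a\in A_{ij}\}$ and $\{\varphi_j(-\,a')\mid a'\in A_{ij}\}$ of $A_{ji}^{*}$ coincide, identifying both with the canonical ``restricted dual'' of the $\hat H$-module $A_{ji}$ (the analogue of $\hat H\subseteq H^{*}$): faithfulness of $\varphi$ and the local-unit structure of $\hat H$, via the displayed identity and its mirror obtained from \eqref{eig1}, force the two spans to agree. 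A cleaner alternative is to first construct the modular automorphism $\sigma^{\psi}_A$ of the genuinely \emph{invariant} functional $\psi_A$ of Lemma~\ref{LemInvpsi}, repeating verbatim Van Daele's construction of the Haar modular automorphism (Proposition~\ref{PropHopfInv}) with the canonical map $\can$ in place of the comultiplication and Lemma~\ref{LemTheta}, \eqref{eig1}--\eqref{eig2} supplying the needed bijectivity, and then to set $\sigma_A(x):=\delta_A^{-1}\sigma^{\psi}_A(x)\delta_A$; since $\psi_A=\varphi_A(-\delta_A)$, the multiplier $\delta_A$ is invertible, and $\theta(y)=y\delta_A$ is bijective, this immediately yields $\varphi_A(xy)=\varphi_A(y\sigma_A(x))$. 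Either route gives $\sigma_A$ on all blocks, hence on $A$; the formal reductions then upgrade it to an algebra automorphism, and the symmetric construction provides its inverse. The genuine difficulty is exactly the consistency step: unlike in Lemma~\ref{LemTheta}, the formula for $a'_h$ depends on $h$ in an essential way (not merely through $\varphi(h)$), so something beyond a single slicing---faithfulness of $\varphi$ together with the fine $\hat H$-module structure, or the detour through the invariant functional $\psi_A$---is required to pin down one element of $A$.
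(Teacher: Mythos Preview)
Your formal reductions are fine, and you have correctly located the obstacle: the candidate $a'_h$ genuinely depends on $h$, and neither of your two proposed escapes actually closes the gap. The ``restricted dual'' identification is not substantiated---there is no analogue of the Fourier bijection $\mathscr{F}:H\to\hat H$ available for the blocks $A_{ij}$, so equality of the two subspaces of $A_{ji}^{*}$ remains an assertion---and the detour through $\psi_A$ merely transports the same consistency problem to $\sigma^{\psi}_A$, since Van Daele's argument for $H$ rests on the full Hopf structure (comultiplication \emph{and} antipode), not on a bare Galois map.

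The paper's resolution is the one ingredient you do not invoke: the modular automorphism $\sigma$ of $H$ itself. Instead of fixing $a$ and searching for $a'$, one parametrizes \emph{both} sides simultaneously. For $p,q\in A$ and $h\in H$ set
\[
x=\sum_i \varphi_A\big(p\,h^{[1;i]}\,q\big)\,h^{[2;i]},\qquad x'=\sum_i g^{[1;i]}\,\varphi_A\big(p\,g^{[2;i]}\,q\big),\quad g:=S^{-1}\sigma(h).
\]
Your own computation via \eqref{eig2} yields $\varphi_A(yx')=\varphi\big(y_{(1)}S(g)\big)\,\varphi_A(p\,y_{(0)}\,q)$, while the mirror computation via \eqref{eig1} yields $\varphi_A(xy)=\varphi\big(h\,y_{(1)}\big)\,\varphi_A(p\,y_{(0)}\,q)$; since $S(g)=\sigma(h)$ and $\varphi(y_{(1)}\sigma(h))=\varphi(h\,y_{(1)})$, these agree for \emph{every} $h$. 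Faithfulness of $\varphi_A$ then makes $x\mapsto x'$ well-defined on the linear span of all such $x$, and a short Galois-map argument (choose $z\in A_{ii}$ with $\varphi_i(z)=1$, write $\can(z\otimes x)=\sum_r y_r\otimes h_r$, and unwind) shows that this span is all of $A$. In short, the $h$-dependence you flagged is absorbed precisely by passing from $h$ to $S^{-1}\sigma(h)$ on the other leg; once you see this, no appeal to $\psi_A$ or to a hypothetical restricted dual is needed.
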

\begin{proof}
First, fix an arbitrary $h\in H$ and set $g = (S^{-1}\sigma)(h)$, where $\sigma$ is the modular automorphism of $(H,\Delta)$. Pick $p,q\in A$, and put
\[
x= \sum_i \varphi_A(ph^{[1;i]}q)h^{[2;i]},\qquad x' = \sum_i g^{[1;i]}\varphi_A(pg^{[2;i]}q).
\]
Then we compute for any $y\in A$ that 
\begin{eqnarray*}
\varphi_A(yx') &=& \sum_{i,j\in I}\varphi_i(yg^{[1;j]})\varphi_{A}(pg^{[2;j]}q)\\
&=& \sum_j \varphi_A(p \Phi(yg^{[1;j]})g^{[2;j]}q)\\
&\underset{\eqref{eig2}}{=}&\sum_{j\in I}\varphi(y_{(1)}\sigma(h))\varphi_{A}(py_{(0)}p_jq)\\
&=& \varphi(y_{(1)}\sigma(h))\varphi_{A}(py_{(0)}q)\\
&=&\varphi(hy_{(1)})\varphi_{A}(py_{(0)}q)\\
&=& \sum_i \varphi(hy_{(1)})\varphi_{A}(pp_iy_{(0)}q)\\
&=&\sum_{i,j\in I}\varphi_{j}(h^{[2;i]}y)\varphi_{A}(ph^{[1;i]}q)\\
&=&\varphi_{A}(xy).
\end{eqnarray*}

We want to prove now that we can find such $x'\in A$ for any $x\in A$. It suffices to show that the elements $x$ as constructed above span $A$. However, assume $x\in A_{i-}$, and pick $z \in A_{ii}$ with $\varphi_i(z) = 1$. Write
\[
\can(z \otimes x) = \sum_{r=1}^n y_r\otimes h_r.
\]
Pick $e\in A^{\alpha}$ with $ey = y$. 
Then 
\begin{eqnarray*}
x &=& \varphi_A(z)x \\
&=& \sum_{r=1}^n\sum_{i} \varphi_A(y_rh_r^{[1;i]})h_r^{[2;i]}\\
&=& \sum_{r=1}^n\sum_{i} \varphi_A(y_rh_r^{[1;i]}e)h_r^{[2;i]}.
\end{eqnarray*}
It follows that $x$ is indeed of the required form. 

Faithfulness of $\varphi_A$ now guarantees that the map
\[
\sigma_A: A \rightarrow A,\quad x \mapsto x'
\]
is well-defined, linear and injective. A similar argument as in the previous paragraph shows that $\sigma_A$ is also surjective. Finally, the faithfulness of $\varphi_A$ and the defining property of $\sigma_A$ also guarantee immediately that $\sigma_A(xy) = \sigma_A(x)\sigma_A(y)$ for all $x,y\in A$.
\end{proof}

\begin{Def}
We call $\sigma_A$ the \emph{Nakayama automorphism}, or \emph{modular automorphism} of $\varphi_A$.
\end{Def}

\begin{Rem}
There also exists a map which plays the same role for an invariant complete functional $\psi_A$.
Take two elements $x,y\in A$, then 
\[
\psi_A(xy) = \varphi_A(xy\delta_A) = \varphi_A(y\delta_A\sigma_A(x)\delta_A^{-1}\delta_A) = \psi_A(y\delta_A\sigma_A(x)\delta_A^{-1}).
\]
So the associated modular automorphism $\sigma_A$ is defined as
$$\sigma_A': A \rightarrow A,\quad x\mapsto \delta_A\sigma_A(x)\delta_A^{-1}.$$
\end{Rem}

\end{document}